\tikzset{string/.style={ultra thick}}
\tikzset{smallstring/.style={thick,scale=0.75,every node/.style={transform shape}}}
\tikzset{
    triple/.style args={[#1] in [#2] in [#3]}{
        #1,preaction={preaction={draw,#3},draw,#2}
    }
}
\tikzset{
    quadruple/.style args={[#1] in [#2] in [#3] in [#4]}{
        #1,preaction={preaction={preaction={draw,#4},draw,#3}, draw,#2}
    }
} 
\tikzset{
	super thick/.style={line width=3pt},
	more thick/.style={line width=1pt},
}
\definecolor{dark-red}{rgb}{0.7,0.25,0.25}
\definecolor{dark-blue}{rgb}{0.15,0.15,0.55}
\definecolor{medium-blue}{rgb}{0,0,0.65}
\definecolor{DarkGreen}{RGB}{0,150,0}
\newcommand{\googlebooks}[1]{(preview at \href{https://books.google.com/books?id=#1}{google books})}
\newcommand{\numdam}[1]{}
\theoremstyle{plain}
\newtheorem{prop}{Proposition}[section]
\newtheorem{thm}[prop]{Theorem}
\newtheorem{lem}[prop]{Lemma}
\numberwithin{equation}{section}
\theoremstyle{remark}
\newtheorem{remark}[prop]{Remark}
\theoremstyle{definition}
\newtheorem{defn}[prop]{Definition}         
\newtheorem{nota}[prop]{Notation}
\newcommand{\sslash}{\mathbin{/\mkern-6mu/}}
\DeclareMathOperator{\mate}{mate}
\DeclareMathOperator{\Tr}{Tr}
\newcommand{\id}{\mathbf{1}}
\renewcommand{\Vec}{{\mathsf {Vec}}}
\newcommand{\Rep}{{\mathsf {Rep}}}
\def\semicolon{;}
\def\applytolist#1{
    \expandafter\def\csname multi#1\endcsname##1{
        \def\multiack{##1}\ifx\multiack\semicolon
            \def\next{\relax}
        \else
            \csname #1\endcsname{##1}
            \def\next{\csname multi#1\endcsname}
        \fi
        \next}
    \csname multi#1\endcsname}
\def\calc#1{\expandafter\def\csname c#1\endcsname{{\mathcal #1}}}
\def\bbc#1{\expandafter\def\csname bb#1\endcsname{{\mathbb #1}}}
\def\bfc#1{\expandafter\def\csname bf#1\endcsname{{\mathbf #1}}}
\newlength{\L@UnitsRaiseDisplaystyle}
\newlength{\L@UnitsRaiseTextstyle}
\newlength{\L@UnitsRaiseScriptstyle}
\DeclareRobustCommand*{\@UnitsNiceFrac}[3][]{%
  \ifthenelse{\boolean{mmode}}{%
    \settoheight{\L@UnitsRaiseDisplaystyle}{%
      \ensuremath{\displaystyle#1{M}}%
    }%
    \settoheight{\L@UnitsRaiseTextstyle}{%
      \ensuremath{\textstyle#1{M}}%
    }%
    \settoheight{\L@UnitsRaiseScriptstyle}{%
      \ensuremath{\scriptstyle#1{M}}%
    }%
    \settoheight{\@tempdima}{%
      \ensuremath{\scriptscriptstyle#1{M}}%
    }%
    \addtolength{\L@UnitsRaiseDisplaystyle}{%
      -\L@UnitsRaiseScriptstyle%
    }%
    \addtolength{\L@UnitsRaiseTextstyle}{%
      -\L@UnitsRaiseScriptstyle%
    }%
    \addtolength{\L@UnitsRaiseScriptstyle}{-\@tempdima}%
    \mathchoice
      {%
        \raisebox{\L@UnitsRaiseDisplaystyle}{%
          \ensuremath{\scriptstyle#1{#2}}%
        }%
      }%
      {%
        \raisebox{\L@UnitsRaiseTextstyle}{%
          \ensuremath{\scriptstyle#1{#2}}%
        }%
      }%
      {%
        \raisebox{\L@UnitsRaiseScriptstyle}{%
          \ensuremath{\scriptscriptstyle#1{#2}}%
        }%
      }%
      {%
        \raisebox{\L@UnitsRaiseScriptstyle}{%
          \ensuremath{\scriptscriptstyle#1{#2}}%
        }%
      }%
    \mkern-2mu{\sslash}\mkern-1mu%
    \bgroup
      \mathchoice
        {\scriptstyle}%
        {\scriptstyle}%
        {\scriptscriptstyle}%
        {\scriptscriptstyle}%
      #1{#3}%
    \egroup
  }%
  {%
    \settoheight{\L@UnitsRaiseTextstyle}{#1{M}}%
    \settoheight{\@tempdima}{%
      \ensuremath{%
        \mbox{\fontsize\sf@size\z@\selectfont#1{M}}%
      }%
    }%
    \addtolength{\L@UnitsRaiseTextstyle}{-\@tempdima}%
    \raisebox{\L@UnitsRaiseTextstyle}{%
      \ensuremath{%
        \mbox{\fontsize\sf@size\z@\selectfont#1{#2}}%
      }%
    }%
    \ensuremath{\mkern-2mu}{\sslash}\ensuremath{\mkern-1mu}%
    \ensuremath{%
      \mbox{\fontsize\sf@size\z@\selectfont#1{#3}}%
    }%
  }%
}
\DeclareRobustCommand*{\nicefrac}{\@UnitsNiceFrac}%
\tikzset{smallstring/.style={thick,scale=0.75,every node/.style={transform shape}}}
\newcommand{\Cat}{\textbf{Cat}}
\newcommand{\vmoncat}{\boldsymbol{\mathcal V}\textbf{MonCat}}
\newcommand{\ggrvmoncat}{\boldsymbol{\mathcal V}\textbf{MonCat}_G}
\newcommand{\gextvmoncat}{\boldsymbol{\mathcal V}\textbf{MonCat}_G^{\mathcal A}}
\newcommand{\vmodtens}{\boldsymbol{\mathcal V}\textbf{ModTens}}
\newcommand{\ggrvmodtens}{\boldsymbol{\mathcal V}\textbf{ModTens}_G}
\newcommand{\gextvmodtens}{\boldsymbol{\mathcal V}\textbf{ModTens}_G^{\mathcal A}}
\newcommand{\commentout}[1]{}
\newcommand{\comph}{\circ}
\newcommand{\compv}{}
\newcommand{\E}{\left(}
\newcommand{\R}{\right)}
\newcommand{\DF}{\left[}
  \newcommand{\FD}{\right]}
\newcommand{\blank}{{-}}
\newcommand{\mor}{\xrightarrow}
\newcommand{\qtand}{\quad \text{and} \quad}
\newcommand{\tand}{\text{ and }}
\newcommand{\restrict}{|}
\DeclareMathOperator{\Forget}{Forget}
\let\id\relax
\DeclareMathOperator{\id}{id}
\tikzset{knot gap = 10}
\begin{document}
\title{A characterization of braided enriched monoidal categories}
\author{Zachary Dell}
\address[Zachary Dell]{
  Department of Mathematics, 
  Ohio State University
}
\email{\href{mailto:dell.31@osu.edu}{dell.31@osu.edu}}
\date{}

\begin{abstract}
  We construct an equivalence between the 2-categories $\boldsymbol{\mathcal V}\textbf{MonCat}_G^{\mathcal A}$ of rigid $\mathcal V$-monoidal categories for a braided monoidal category $\mathcal V$ and $\boldsymbol{\mathcal V}\textbf{ModTens}_G^{\mathcal A}$ of oplax braided functors from $\mathcal V$ into the Drinfeld centers of ordinary rigid monoidal categories.  The 1-cells in each are the respective lax monoidal functors, and the 2-cells are the respective monoidal natural transformations.  Our proof also gives an equivalence in the case that we consider only strong monoidal 1-cells on both sides.  The 2-categories $\boldsymbol{\mathcal V}\textbf{MonCat}_G^{\mathcal A}$ and $\boldsymbol{\mathcal V}\textbf{ModTens}_G^{\mathcal A}$ have $G$-graded analogues. We also get an equivalence of 2-categories between $G$-extensions of some fixed $\mathcal V$-monoidal category $\mathcal A$, and $G$-extensions of some fixed $\mathcal V$-module tensor category $(A , \mathcal F_A^Z)$.
\end{abstract}
\maketitle{}

\section{Introduction}
The theory of monoidal categories enriched in a symmetric monoidal category $\mathcal V$ is well established \cite{MR749468, MR2177301, MR2219705, MR3775482}.  The articles (\cite{MR3961709, 1809.09782, 1910.03178}) have developed the theory when $\mathcal V$ need only be braided, without symmetry.  Remark 5.2 of \cite{MR1250465} outlines the idea of braided enriched categories and mentions that they form a 2-category.  Braided enriched monoidal categories were classified in \cite{MR3961709} in terms of strongly unital oplax braided monoidal functors from the enriching category $\mathcal V$ into the Drinfeld center of an ordinary monoidal category:
\begin{thm}[\cite{MR3961709}]
  \label{Main MP Theorem}
  Let $\mathcal V$ be a braided monoidal category.
  There is a bijective correspondence 
  \[
  \left\{\, 
  \parbox{5cm}{\rm Rigid $\mathcal V$-monoidal categories $\mathcal A$, such that $x\mapsto \mathcal A (1_{\mathscr C} \to x)$ admits a left adjoint}\,\left\}
  \,\,\,\,\cong\,\,
  \left\{\,\parbox{8cm}{\rm Pairs $\E A, \mathcal F^{\scriptscriptstyle Z} \R$ with $A$ a rigid monoidal category and $\mathcal F_A^{\scriptscriptstyle Z}: \mathcal V \to Z \E A \R$ braided oplax monoidal, such that $\mathcal F_A \coloneqq \mathcal F_A^{\scriptscriptstyle Z} \circ R$ admits a right adjoint}\,\right\},
  \right.\right.
  \]
  where $R: Z \E A \R \to A$ is the forgetful functor.
\end{thm}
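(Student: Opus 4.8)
The plan is to construct explicit passages in both directions and check that they are mutually inverse. Starting from a rigid $\mathcal V$-monoidal category $\mathcal A$, I would first form the ordinary category $A$ having the same objects as $\mathcal A$ and hom-sets $A\E a \to b \R \coloneqq \mathcal V\E 1_{\mathcal V} \to \mathcal A\E a \to b \R\R$, obtained by applying the global-sections functor $\mathcal V\E 1_{\mathcal V} \to \blank \R$ to the enriched hom-objects; its monoidal structure and rigidity are inherited from $\mathcal A$. The hypothesis that $x \mapsto \mathcal A\E 1_{\mathscr C} \to x \R$ admits a left adjoint yields the functor $\mathcal F_A \colon \mathcal V \to A$ determined by $A\E \mathcal F_A\E v \R \to x \R \cong \mathcal V\E v \to \mathcal A\E 1_{\mathscr C} \to x \R\R$, and its right adjoint is then exactly $x \mapsto \mathcal A\E 1_{\mathscr C} \to x \R$, so the right adjoint demanded on the other side of the correspondence is automatic. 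Because the unit-hom functor $\mathcal A\E 1_{\mathscr C} \to \blank \R$ is lax monoidal, doctrinal adjunction endows its left adjoint $\mathcal F_A$ with a canonical oplax monoidal structure.

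The main obstacle is to lift $\mathcal F_A$ through the forgetful functor to a \emph{braided} oplax monoidal functor $\mathcal F_A^{\scriptscriptstyle Z} \colon \mathcal V \to Z\E A \R$ with $\mathcal F_A$ its underlying functor. Concretely this means equipping each object $\mathcal F_A\E v \R$ with a half-braiding $e_{v,a} \colon \mathcal F_A\E v \R \otimes a \to a \otimes \mathcal F_A\E v \R$, natural in $a$ and compatible with the tensor product of $A$, and then verifying that $v \mapsto \E \mathcal F_A\E v \R, e_{v,\blank}\R$ intertwines the braiding of $\mathcal V$ with the canonical braiding of the center. I would build $e_{v,a}$ out of the braiding of $\mathcal V$ together with the enriched composition morphisms of $\mathcal A$, and I expect the verification of the half-braiding hexagons, the naturality squares, and the braided oplax compatibility to be the technically heaviest step. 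This is most transparent in the string-diagram calculus for braided enriched categories, where $e_{v,a}$ is a $\mathcal V$-coloured strand crossing an $A$-coloured strand and the axioms reduce to planar isotopies of such diagrams.

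For the reverse passage, starting from a pair $\E A, \mathcal F_A^{\scriptscriptstyle Z}\R$, I would use the underlying functor $\mathcal F_A$ to make $\mathcal V$ act on $A$ by $v \triangleright a \coloneqq \mathcal F_A\E v \R \otimes a$, taking the associativity constraints from the oplax structure maps and the braided compatibility from the half-braiding carried by $\mathcal F_A^{\scriptscriptstyle Z}$. Since $\mathcal F_A$ admits a right adjoint $G$ and $A$ is rigid, the internal hom of this action exists and is given explicitly by $\mathcal A\E a \to b \R \coloneqq G\E b \otimes a^{*}\R$, via the natural isomorphisms $A\E \mathcal F_A\E v \R \otimes a \to b \R \cong A\E \mathcal F_A\E v \R \to b \otimes a^{*}\R \cong \mathcal V\E v \to G\E b \otimes a^{*}\R\R$. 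Equipping $\mathcal A$ with the enriched composition and tensor product induced from those of $A$ produces a rigid $\mathcal V$-monoidal category whose unit-hom functor $\mathcal A\E 1_{\mathscr C} \to \blank \R \cong G$ has left adjoint $\mathcal F_A$, so the left-adjoint hypothesis on the other side holds.

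Finally I would check that the two passages are mutually inverse. Going $\mathcal A \mapsto \E A, \mathcal F_A^{\scriptscriptstyle Z}\R \mapsto \mathcal A'$ recovers the original enriched hom-objects through the adjunction isomorphism $\mathcal V\E v \to \mathcal A\E a \to b \R\R \cong A\E \mathcal F_A\E v \R \otimes a \to b \R$ together with the Yoneda lemma in $\mathcal V$, while going $\E A, \mathcal F_A^{\scriptscriptstyle Z}\R \mapsto \mathcal A \mapsto \E A', \E\mathcal F_A^{\scriptscriptstyle Z}\R'\R$ returns the underlying category and the central lift directly from the definitions. Both round trips are natural, but they require carefully matching the associators, unitors, and half-braidings on the two sides; this coherence bookkeeping, routine but lengthy, is where the graphical calculus does the real work.
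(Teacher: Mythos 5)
Your proposal follows essentially the same route as the paper (which cites \cite{MR3961709} for this theorem and sketches exactly this correspondence): in one direction the underlying category $A$ together with the left adjoint $\mathcal F_A$ of the unit-hom functor, lifted to $Z(A)$ by half-braidings built from the enriched structure; in the other direction, hom objects recovered from rigidity, the right adjoint of $\mathcal F_A$, and the Yoneda lemma via $\mathcal V ( v \to \mathcal A ( a \to b ) ) \cong A ( a \mathcal F_A ( v ) \to b )$. The ingredients you name (doctrinal adjunction for the oplax structure, half-braidings as the technical core, mates under the defining adjunction) are the same ones used there, up to left/right action and duality conventions.
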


Note that we are writing composition from left to right, as in \cite{MR3961709, 1809.09782}, but reversed to the typical convention. From a $\mathcal V$-monoidal category $\mathcal A$, we can take the underlying category $A$, which has the same objects as $\mathcal A$ and hom sets $A \E a \to b \R \coloneqq \mathcal V \E 1_{\mathcal V} \to \mathcal A \E a \to b \R \R$.  The underlying category of $\mathcal A$ is denoted by $\mathcal A^{\mathcal V}$ in \cite{MR3961709}, but here we merely change the font; there are no ambiguous situations, and our focus is not on the distinction between $\mathcal A$ and its underlying category $A$.  As shown in \cite{MR3961709}, $\mathcal F_A$ is taken to be the left adjoint of $A \E 1_A \to \blank \R$ and shown to lift to $\mathcal F_A^Z :\mathcal V \to Z \E A \R$.
In the other direction, given $\E A , \mathcal F_A^Z \R$ we can construct a $\mathcal V$-monoidal category $\mathcal A$ using the same objects as $A$, and hom objects defined via the Yoneda lemma by the natural isomorphisms
\begin{align*}
  \mathcal V \E v \to \mathcal A \E a \to b \R \R \cong A \E a \mathcal F_A \E v \R \to b \R,
\end{align*}
which themselves follow from rigidity and the right adjoint of $\mathcal F$.
The article \cite{1809.09782} characterizes when the functor $\mathcal F^Z$ is in fact \emph{strong} monoidal, and also develops a notion of completeness for $\mathcal V$-monoidal categories.  In the absence of monoidal structures, \cite{MR3961709} states an equivalence of 2-categories; we go one categorical level higher to the case of $\mathcal V$-monoidal categories:
\begin{thm}
  \label{Main theorem}
  Theorem \ref{Main MP Theorem} extends to an equivalence of 2-categories.
\end{thm}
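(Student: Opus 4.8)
The plan is to upgrade the object-level bijection of Theorem~\ref{Main MP Theorem} to a biequivalence by extending each direction of that bijection to a 2-functor and then exhibiting natural 2-isomorphisms witnessing that the two 2-functors are weakly inverse. Because Theorem~\ref{Main MP Theorem} already supplies an honest bijection on objects, essential surjectivity is automatic and carries no content; the entire work lies in defining the 2-functors on 1-cells and 2-cells and in checking that the induced functors on Hom-categories are equivalences. I would set up the forward 2-functor $\Phi$ from $\vmoncat$ to $\vmodtens$ and its putative inverse $\Psi$, and then verify the biequivalence criterion locally.

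For the action on 1-cells I would start from a lax $\mathcal V$-monoidal functor $\mathcal{G}\colon \mathcal A \to \mathcal B$, whose data are an assignment on objects together with morphisms $\mathcal A(a \to b) \to \mathcal B(\mathcal{G}a \to \mathcal{G}b)$ in $\mathcal V$ and lax monoidal structure maps. Applying $\mathcal V(1_{\mathcal V} \to \blank)$ produces an ordinary functor $G\colon A \to B$ on underlying categories, and conjugating the enriched-hom action through the defining isomorphisms $\mathcal V(v \to \mathcal A(a \to b)) \cong A(a\mathcal F_A(v) \to b)$ by the mate calculus for the adjunction $\mathcal F_A \dashv A(1_A \to \blank)$ turns $\mathcal V$-functoriality into an ordinary functor $G$ equipped with a comparison cell relating $\mathcal F_A$ and $\mathcal F_B$. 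Under this same translation the lax $\mathcal V$-monoidal structure on $\mathcal{G}$ becomes a lax monoidal structure on $G$ together with a compatibility cell relating $\mathcal F_A^Z$ and $\mathcal F_B^Z$ through $G$ as functors into the respective Drinfeld centers, i.e.\ precisely a 1-cell of $\vmodtens$. A monoidal $\mathcal V$-natural transformation transports in the same manner to a monoidal natural transformation on the underlying side, and one checks that identities and composites are preserved, so $\Phi$ is a genuine 2-functor.

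In the reverse direction I would use the backward construction of Theorem~\ref{Main MP Theorem}, which builds $\mathcal A$ from $(A, \mathcal F_A^Z)$ by defining hom objects through Yoneda from the isomorphisms above, using rigidity and the right adjoint of $\mathcal F$. From a 1-cell $(A, \mathcal F_A^Z) \to (B, \mathcal F_B^Z)$—a lax monoidal functor $G$ compatible with the central structures—one reconstructs morphisms $\mathcal A(a \to b) \to \mathcal B(\mathcal{G}a \to \mathcal{G}b)$ by Yoneda, again via the mate correspondence, and the compatibility of $G$ with $\mathcal F^Z$ supplies exactly the lax $\mathcal V$-monoidal coherence of the resulting enriched functor. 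This gives a 2-functor $\Psi$, and the units and counits of the relevant adjunctions make the round-trip composites $\Psi\Phi$ and $\Phi\Psi$ naturally 2-isomorphic to the identities, refining the object-level bijection to a biequivalence. Restricting the whole argument to structure maps that are isomorphisms yields the strong-monoidal variant, and running it grade-by-grade with the $G$-action yields the $G$-graded and $G$-extension statements.

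The main obstacle will be the coherence bookkeeping in the 1-cell translation: verifying that the single lax $\mathcal V$-monoidal structure on $\mathcal{G}$ unpacks, under the mate calculus, into the two separate pieces of data on the $\vmodtens$ side—an ordinary lax monoidal structure on $G$ and a natural transformation intertwining the half-braidings carried by $\mathcal F_A^Z$ and $\mathcal F_B^Z$—and that the associativity and unit axioms on each side correspond exactly. The half-braiding compatibility is the delicate point, since it is where the oplax braided monoidal condition on $\mathcal F^Z$ and the centrality constraints of the Drinfeld centers interact; confirming that the hexagon-type coherences governing the enriched functor match the centrality coherences after transport is the diagram-heavy heart of the argument.
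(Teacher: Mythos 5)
Your core constructions coincide with the paper's: 0-cells are handled by the correspondence of Theorem~\ref{Main MP Theorem}, 1-cells by the mate calculus under the adjunction $A \E a \mathcal F_A \E v \R \to b \R \cong \mathcal V \E v \to \mathcal A \E a \to b \R \R$, 2-cells by the identity adjunction, and the ``diagram-heavy heart'' you point to (the half-braiding and action coherences for the comparison cell) is exactly where the paper spends its effort. Where you genuinely diverge is in how the equivalence is certified. The paper constructs \emph{one} 2-functor $P : \vmoncat \to \vmodtens$ and verifies its definition of 2-equivalence locally: essential surjectivity on 0-cells is quoted from \cite{MR3961709}, and each hom-functor $P_{\mathcal A \to \mathcal B}$ is shown to be an equivalence of categories by building an inverse on 1-cells and 2-cells \emph{within that single hom-category}; there the round trips amount to taking mates twice, hence are equalities, and no naturality in $\mathcal A , \mathcal B$ ever needs checking. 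Your plan instead promotes the backward construction to a global 2-functor $\Psi$ and asks for $\Psi\Phi$ and $\Phi\Psi$ to be ``naturally 2-isomorphic'' to identities. That costs two things you do not budget for. First, $\Psi$ must be shown to preserve composition of 1-cells across hom-categories --- a diagram chase comparable in size to the paper's hardest verification, namely that $P$ itself preserves 1-cell composition. Second, the round trip on 0-cells does not return the same object: $\Phi\Psi \E A , \mathcal F_A^Z \R$ is the underlying category of the Yoneda-reconstructed enrichment, which is canonically \emph{equivalent} to $\E A , \mathcal F_A^Z \R$ but not equal to it, so there is no strict 2-natural isomorphism to the identity; one needs pseudonatural equivalences together with their coherence data, which is precisely the bookkeeping the paper's local formulation is designed to avoid. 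A small correction to your unpacking of 1-cells: the laxitor requires no translation at all --- it is literally the same element under the equality $B \E R \E a \R R \E b \R \to R \E a b \R \R = \mathcal V \E 1_{\mathcal V} \to \mathcal B \E \mathcal R \E a \R \mathcal R \E b \R \to \mathcal R \E a b \R \R \R$ --- and the action-coherence transformation $r$ arises from the enriched hom data $\mathcal R_{1_{\mathcal A} \to \mathcal F_A \E v \R}$, not from the lax monoidal structure; the monoidal structure enters only when verifying the coherence axioms that $r$ must satisfy. In sum, your route can be completed, but the paper's single-functor, hom-by-hom argument is strictly cheaper and is exactly what its definition of 2-equivalence asks for.
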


This theorem was announced in \cite{1910.03178}. A recent article, \cite{2104.03121}, contains similar results with independent proofs. To start, we construct 2-categories with the sets from Theorem \ref{Main MP Theorem} as 0-cells as follows:
\begin{itemize}
\item The 1-cells on the left hand side are lax $\mathcal V$-monoidal functors.
\item The 1-cells on the right are lax monoidal functors equipped with an extra \emph{action-coherence} natural transformation, introduced in Section \ref{Define bicategories}. The action-coherence natural transformation encodes the interaction between the functor $\mathcal F^Z$, which is \emph{oplax}, and the 1-cells, which are \emph{lax}.  
\item On both sides, 2-cells are the corresponding monoidal natural transformations satisfying the appropriate coherences with the action coherence natural transformations.
\end{itemize}
We then construct a 2-functor between the 2-categories, proceeding as in \cite{MR3961709} on 0-cells.  The map on 2-cells turns out to be straightforward, so the bulk of our work goes into constructing the map on 1-cells between the two 2-categories, verifying that the maps together form a 2-functor, and showing that this 2-functor is a 2-equivalence.  Starting with a lax $\mathcal V$-monoidal functor $\mathcal R : \mathcal A \to \mathcal B$ between $\mathcal V$-monoidal categories, we get an ordinary lax monoidal functor by taking the underlying functor $R = \mathcal R^{\mathcal V}$, and we construct our action-coherence natural transformation $r : F_B \to R \circ F_A$ via, for $v \in \mathcal V$,
\begin{equation*}
  r_v = \mate \E
  \begin{tikzpicture}[baseline=30, smallstring]
    \node (top) at (0,3) {$\mathcal B \E 1_{\mathcal B} \to R \E \mathcal F_A \E v \R \R \R$};
    \node (bot) at (0,0) {$v$};
    \node[draw, rectangle] (eta) at (0,1) {$\eta_v^{\mathcal F_A}$};
    \node[draw, rectangle] (R) at (0,2) {$\mathcal R_{1_{\mathcal A} \to \mathcal F_A \E v \R}$};
    \draw (bot) to (eta);
    \draw (eta) to (R);
    \draw (R) to (top);
  \end{tikzpicture} \R,
\end{equation*}
where the mate is taken under the adjunction
\begin{align*}
  B \E \mathcal F_B \E v \R \to R \E \mathcal F_A \E v \R \R \R \cong \mathcal V \E v \to \mathcal B \E 1_{\mathcal B} \to R \E \mathcal F_A \E v \R \R \R \R.
\end{align*}
Starting instead with a monoidal functor $\E R , \rho \R$ between $\E A , \mathcal F_A \R$ and $\E B , \mathcal F_B \R$ in $\vmodtens$, with action-coherence $r$, we construct a $\mathcal V$-monoidal functor $\mathcal R$:
\begin{equation*}
  \mathcal R_{a \to b} \coloneqq \mate \E
  \begin{tikzpicture}[baseline=35, smallstring]
    \node (top) at (1.5,4) {$R \E b \R$};
    \node (a) at (0,0) {$R \E a \R$};
    \node (bot) at (3,0) {$\DF a \to b \FD_{\mathcal F_B}^{\mathcal A}$};
    \node[draw, rectangle] (r) at (3,0.8) {$r_{\mathcal A \E a \to b \R}$};
    \node[draw, rectangle] (rho) at (1.5,2) {$\rho_{a , \DF a \to b \FD_{\mathcal F_A}^{\mathcal \mathcal A}}$};
    \node[draw, rectangle] (rep) at (1.5,3) {$R \E \epsilon_{a \to b}^{\mathcal F_A} \R$};
    \draw (a) to[in=-90,out=90] (rho.225);
    \draw (bot) to[in=-90,out=90] (r);
    \draw (r) to[in=-90,out=90] (rho.-45);
    \draw (rho) to[in=-90,out=90] (rep);
    \draw (rep) to (top);
  \end{tikzpicture} \R,
\end{equation*}
with mate taken under the adjunction
\begin{align*}
  \mathcal V \E \mathcal A \E a \to b \R \to \mathcal B \E R \E a \R \to R \E b \R \R \R \cong B \E R \E a \R \mathcal F_B \E \mathcal A \E a \to b \R \R \to R \E b \R \R.
\end{align*}
The laxitors for the 1-cells are the same on both sides, interpreted as elements of
\begin{align*}
  B \E R \E a \R R \E b \R \to R \E a b \R \R = \mathcal V \E 1_{\mathcal V} \to \mathcal B \E \mathcal R \E a \R \mathcal R \E b \R \to \mathcal R \E a b \R \R \R.
\end{align*}

In \cite{Lin76, MR3961709}, Theorem \ref{Main Theorem} is addressed without any monoidal structure on the enriched categories by combining the $\E \id r \R \circ \rho$ from the diagram above into a single coherence morphism to be included in the data of a 1-cell. Lastly, in \cite{1910.03178} the authors introduce the notions of a $G$-grading and a $G$-extension of a $\mathcal V$-monoidal category for a finite group $G$.  In particular, they studied $G$-extensions of $\mathcal V$-fusion categories, which were also used by Kong and Zheng to present a unified theory of gapped and gapless edges for 2D topological orders \cite{MR3763324, 1905.04924, 1912.01760}.  In Section \ref{G-gradings} we introduce the necessary definitions and lift Theorem \ref{Main theorem} to the $G$-graded setting.  Then in Section \ref{G-extensions} we go one step further to the case of $G$-extensions of $\mathcal V$-monoidal categories, and again lift Theorem \ref{Main theorem}.

\subsubsection*{Acknowledgments}
We would like to thank Peter Huston for helpful discussions.  The work was partially supported by NSF grants DMS 1654159 and 1927098.


\section{Background}
Let $\mathcal V$ be a braided monoidal category with braiding $\beta$.  We follow the notational conventions set in \cite{MR3961709}; specifically, we suppress $\otimes$ (in $\mathcal V$) and all associators and unitors in $\mathcal V$, with compositions written explicitly as $\circ$, and composition written from left to right.

\subsection{$\mathcal V$-categories}
A $\mathcal V$-category, or $\mathcal V$-enriched category, $\mathcal A$ consists of a collection of objects along with:
\begin{itemize}
\item For each pair of objects $a , b \in \mathcal A$, an associated hom object $\mathcal A \E a \to b \R \in \mathcal V$,
\item For each object $a \in \mathcal A$, an identity element $j_a \in \mathcal V \E 1_{\mathcal V} \to \mathcal A \E a \to a \R \R$, and
\item For each triple of objects $a , b , c \in \mathcal A$, a distinguished composition morphism
  \begin{align*}
    \blank \circ_{\mathcal A} \blank \in \mathcal V \E \mathcal A \E a \to b \R \mathcal A \E b \to c \R \to \mathcal A \E a \to c \R \R
  \end{align*}
\end{itemize}
satisfying the axioms
\begin{itemize}
\item Unitality: For all $a , b \in \mathcal A$,
  \begin{align*}
    \begin{tikzpicture}[baseline=30, smallstring]
      \node (top) at (1,3) {$\mathcal A \E a \to b \R$};
      \node (bot) at (2,0) {$\mathcal A \E a \to b \R$};
      \node[draw, rectangle] (j) at (0,1) {$j_a$};
      \node[draw, rectangle] (circ) at (1,2) {$\blank \circ_{\mathcal A} \blank$};
      \draw (j) to[in=-90,out=90] (circ.225);
      \draw (bot) to[in=-90,out=90] (circ.-45);
      \draw (circ) to[in=-90,out=90] (top);
    \end{tikzpicture}
    =
    \begin{tikzpicture}[baseline=30, smallstring]
      \node(bot) at (0,0) {$\mathcal A \E a \to b \R$};
      \node (top) at (0,3) {$\mathcal A \E a \to b \R$};
      \draw (top) to (bot);
    \end{tikzpicture}
    =
    \begin{tikzpicture}[baseline=30, smallstring]
      \node (top) at (1,3) {$\mathcal A \E a \to b \R$};
      \node (bot) at (0,0) {$\mathcal A \E a \to b \R$};
      \node[draw, rectangle] (j) at (2,1) {$j_b$};
      \node[draw, rectangle] (circ) at (1,2) {$\blank \circ_{\mathcal A} \blank$};
      \draw (j) to[in=-90,out=90] (circ.-45);
      \draw (bot) to[in=-90,out=90] (circ.225);
      \draw (circ) to[in=-90,out=90] (top);
    \end{tikzpicture}
  \end{align*}
\item Associativity of $\E \blank \circ \blank \R$: For all $a , b , c , d$, 
  \begin{align*}
    \begin{tikzpicture}[baseline=30, smallstring]
      \node (ab) at (0,0) {$\mathcal A \E a \to b \R$};
      \node (bc) at (2,0) {$\mathcal A \E b \to c \R$};
      \node (cd) at (4,0) {$\mathcal A \E c \to d \R$};
      \node (top) at (2,3) {$\mathcal A \E a \to d \R$};
      \node[draw, rectangle] (circ1) at (1,1) {$\blank \circ_{\mathcal A} \blank$};
      \node[draw, rectangle] (circ2) at (2,2) {$\blank \circ_{\mathcal A} \blank$};
      \draw (ab) to[in=-90,out=90] (circ1.225);
      \draw (bc) to[in=-90,out=90] (circ1.-45);
      \draw (circ1) to[in=-90,out=90] (circ2.225);
      \draw (cd) to[in=-90,out=90] (circ2.-45);
      \draw (circ2) to[in=-90,out=90] (top);
    \end{tikzpicture}
    =
    \begin{tikzpicture}[baseline=30, smallstring]
      \node (ab) at (0,0) {$\mathcal A \E a \to b \R$};
      \node (bc) at (2,0) {$\mathcal A \E b \to c \R$};
      \node (cd) at (4,0) {$\mathcal A \E c \to d \R$};
      \node (top) at (2,3) {$\mathcal A \E a \to d \R$};
      \node[draw, rectangle] (circ1) at (3,1) {$\blank \circ_{\mathcal A} \blank$};
      \node[draw, rectangle] (circ2) at (2,2) {$\blank \circ_{\mathcal A} \blank$};
      \draw (ab) to[in=-90,out=90] (circ2.225);
      \draw (bc) to[in=-90,out=90] (circ1.225);
      \draw (circ1) to[in=-90,out=90] (circ2.-45);
      \draw (cd) to[in=-90,out=90] (circ1.-45);
      \draw (circ2) to[in=-90,out=90] (top);
    \end{tikzpicture}.
  \end{align*}
\end{itemize}
For objects $a , b$ in a $\mathcal V$-enriched category $\mathcal A$ and $v \in \mathcal V$, a \emph{$v$-graded morphism} from $a$ to $b$ is a morphism $f \in \mathcal V \E v \to \mathcal A \E a \to b \R \R$.  We will give particular attention to the case where $v = 1_{\mathcal V}$, the \emph{$1_{\mathcal V}$-graded morphisms}.

\example[Self-enrichment]
Given a closed monoidal category $\mathcal V$, we can construct a $\mathcal V$-category $\widehat{\mathcal V}$ by starting with the objects of $\mathcal V$, then taking hom objects $\widehat{\mathcal V} \E u \to v \R = \DF u , v \FD$, the internal hom in $\mathcal V$ defined via
\begin{align*}
  \mathcal V \E u w \to v \R = \mathcal V \E w \to \widehat{\mathcal V} \E u \to v \R \R.
\end{align*}
Under this identity adjunction, we also define $j_v \coloneqq \mate \E \id_v \R$ and
\begin{align*}
  \blank \circ_{\widehat{\mathcal V}} \blank \coloneqq
  \mate \E \begin{tikzpicture}[smallstring, baseline=30]
    \node (u) at (0,0) {$u$};
    \node (uv) at (2,0) {$\widehat{\mathcal V} \E u \to v \R$};
    \node (vw) at (4,0) {$\widehat{\mathcal V} \E v \to w \R$};
    \node (top) at (2,3) {$w$};
    \node[draw, rectangle] (ep1) at (1,1) {$\epsilon_{u \to v}^{\mathcal F}$};
    \node[draw, rectangle] (ep2) at (2,2) {$\epsilon_{v \to w}^{\mathcal F}$};
    \draw (u) to[in=-90,out=90] (ep1.225);
    \draw (uv) to[in=-90,out=90] (ep1.-45);
    \draw (vw) to[in=-90,out=90] (ep2.-45);
    \draw (ep1) to[in=-90,out=90] (ep2.225);
    \draw (ep2) to[in=-90,out=90] (top);
  \end{tikzpicture} \R.
\end{align*}
\begin{defn} \label{Underlying category}
  Given a $\mathcal V$-category $\mathcal A$, we can construct an ordinary category $\mathcal A^{\mathcal V}$, the \emph{underlying category} of $\mathcal A$, by giving $\mathcal A^{\mathcal V}$ the same objects as $\mathcal A$, defining $\mathcal A^{\mathcal V} \E a \to b \R \coloneqq \mathcal V \E 1_{\mathcal V} \to \mathcal A \E a \to b \R \R$ for each $a , b \in \mathcal A^{\mathcal V}$, and composition via $f \circ g = \E f g \R \circ \E \blank \circ_{\mathcal A} \blank \R$, for $f \in \mathcal A^{\mathcal V} \E a \to b \R$ and $g \in \mathcal A^{\mathcal V} \E b \to c \R$.  In the following we will often denote the underlying category of a $\mathcal V$-category $\mathcal A$ by a standard font $A$.  
\end{defn}

\subsection{$\mathcal V$-functors and natural transformations}
A \emph{$\mathcal V$-functor} $\mathcal R : \mathcal A \to \mathcal B$ between $\mathcal V$-categories consists of a function on objects and for each $a , b \in \mathcal A$, a morphism $\mathcal R_{a \to b} \in \mathcal V \E \mathcal A \E a \to b \R \to \mathcal B \E \mathcal R \E a \R \to \mathcal R \E b \R \R \R$, satisfying the following axioms:
\begin{itemize}
\item Functoriality: For all $a , b \in \mathcal A$, \commentout{$\E \blank \circ_{\mathcal A} \blank \R \circ \mathcal R_{a \to c} = \E \mathcal R_{a \to b} \mathcal R_{b \to c} \R \circ \E \blank \circ_{\mathcal B} \blank \R$:}
  \begin{align*}
    \begin{tikzpicture}[baseline=30, smallstring]
      \node (ab) at (0,0) {$\mathcal A \E a \to b \R$};
      \node (bc) at (2,0) {$\mathcal A \E b \to c \R$};
      \node (top) at (1,3) {$\mathcal B \E \mathcal R \E a \R \to \mathcal R \E c \R \R$};
      \node[draw, rectangle] (circ) at (1,1) {$\blank \circ_{\mathcal A} \blank$};
      \node[draw, rectangle] (r) at (1,2) {$\mathcal R_{a \to c}$};
      \draw (ab) to[in=-90,out=90] (circ.225);
      \draw (bc) to[in=-90,out=90] (circ.-45);
      \draw (circ) to[in=-90,out=90] (r);
      \draw (r) to[in=-90,out=90] (top);
    \end{tikzpicture}
    =
    \begin{tikzpicture}[baseline=30, smallstring]
      \node (ab) at (0,0) {$\mathcal A \E a \to b \R$};
      \node (bc) at (2,0) {$\mathcal A \E b \to c \R$};
      \node (top) at (1,3) {$\mathcal B \E \mathcal R \E a \R \to \mathcal R \E b \R \R$};
      \node[draw, rectangle] (rab) at (0,1) {$\mathcal R_{a \to b}$};
      \node[draw, rectangle] (rbc) at (2,1) {$\mathcal R_{b \to c}$};
      \node[draw, rectangle] (circ) at (1,2) {$\blank \circ_{\mathcal B} \blank$};
      \draw (ab) to[in=-90,out=90] (rab);
      \draw (bc) to[in=-90,out=90] (rbc);
      \draw (rab) to[in=-90,out=90] (circ.225);
      \draw (rbc) to[in=-90,out=90] (circ.-45);
      \draw (circ) to[in=-90,out=90] (top);
    \end{tikzpicture}
  \end{align*}

\item Unit preserving: For all $a \in \mathcal A$, \commentout{$j_a^{\mathcal A} \circ \mathcal R_{a \to a} = j_{\mathcal R \E a \R}^{\mathcal B}$:}
  \begin{align*}
    \begin{tikzpicture}[baseline=15, smallstring]
      \node (top) at (0,2) {$\mathcal B \E \mathcal R \E a \R \to \mathcal R \E a \R \R$};
      \node[draw, rectangle] (j) at (0,0) {$j_a^{\mathcal A}$};
      \node[draw, rectangle] (r) at (0,1) {$\mathcal R_{a \to a}$};
      \draw (j) to (r);
      \draw (r) to (top);
    \end{tikzpicture}
    =
    \begin{tikzpicture}[baseline=5, smallstring]
      \node (top) at (0,1) {$\mathcal B \E \mathcal R \E a \R \to \mathcal R \E a \R \R$};
      \node[draw, rectangle] (j) at (0,0) {$j_{\mathcal R \E a \R}^{\mathcal B}$};
      \draw (j) to (top);
    \end{tikzpicture}
  \end{align*}

\end{itemize}
We can compose two $\mathcal V$-functors $\mathcal A \mor{\mathcal R} \mathcal B \mor{\mathscr S} \mathscr C$ via $\E \mathcal R \circ \mathscr S \R \E a \R = \mathscr S \E \mathcal R \E a \R \R$ and $\E \mathcal R \circ \mathscr S \R_{a \to b} = \mathcal R_{a \to b} \circ \mathscr S_{\mathcal R \E a \R \to \mathcal R \E b \R}$ for all objects $a , b \in \mathcal A$.
For $\mathcal V$-functors $\mathcal R , \mathscr S : \mathcal A \to \mathcal B$, we define a \emph{$1_{\mathcal V}$-graded natural transformation} $\theta : \mathcal R \Rightarrow \mathscr S$ by assigning to each object $a \in \mathcal A$ a $1_{\mathcal V}$-graded morphism $\theta_a \in \mathcal V \E 1_{\mathcal V} \to \mathcal B \E \mathcal R \E a \R \to \mathscr S \E a \R \R \R$ such that that for all $a , b \in \mathcal A$,
\begin{equation}\label{vtrans nat}
  \begin{tikzpicture}[baseline=35, smallstring]
    \node (ab) at (2,0) {$\mathcal A \E a \to b \R$};
    \node (top) at (1,3) {$\mathcal B \E \mathcal R \E a \R \to \mathscr S \E b \R \R$};
    \node[draw, rectangle] (theta) at (0,1) {$\theta_a$};
    \node[draw, rectangle] (s) at (2,1) {$\mathscr S_{a \to b}$};
    \node[draw, rectangle] (circ) at (1,2) {$\blank \circ_{\mathcal B} \blank$};
    \draw (ab) to[in=-90,out=90] (s);
    \draw (theta) to[in=-90,out=90] (circ.225);
    \draw (s) to[in=-90,out=90] (circ.-45);
    \draw (circ) to[in=-90,out=90] (top);
  \end{tikzpicture}
  =
  \begin{tikzpicture}[baseline=35, smallstring]
    \node (ab) at (0,0) {$\mathcal A \E a \to b \R$};
    \node (top) at (1,3) {$\mathcal B \E \mathcal R \E a \R \to \mathscr S \E b \R \R$};
    \node[draw, rectangle] (theta) at (2,1) {$\theta_b$};
    \node[draw, rectangle] (r) at (0,1) {$\mathcal R_{a \to b}$};
    \node[draw, rectangle] (circ) at (1,2) {$\blank \circ_{\mathcal B} \blank$};
    \draw (ab) to[in=-90,out=90] (r);
    \draw (r) to[in=-90,out=90] (circ.225);
    \draw (theta) to[in=-90,out=90] (circ.-45);
    \draw (circ) to[in=-90,out=90] (top);
  \end{tikzpicture}.
\end{equation}
This condition is the usual naturality square, but written in terms of composition of $1_{\mathcal V}$-graded morphisms.

\begin{defn} \label{Underlying functor}
  Given a $\mathcal V$-functor $\mathcal R : \mathcal A \to \mathcal B$ between $\mathcal V$-categories, we construct the \emph{underlying functor} of $\mathcal R$, an ordinary functor $\mathcal R^{\mathcal V} : A \to B$ between the underlying categories.  We take $\mathcal R^{\mathcal V} \E a \R = \mathcal R \E a \R$ for each $a \in A$, and on morphisms $f \in A \E a \to b \R = \mathcal V \E 1_{\mathcal V} \to \mathcal A \E a \to b \R \R$, we define $\mathcal R \E f \R \coloneqq f \circ \mathcal R_{a \to b} \in B \E \mathcal R \E a \R \to \mathcal R \E b \R \R = \mathcal V \E 1_{\mathcal V} \to \mathcal B \E \mathcal R \E a \R \to \mathcal R \E b \R \R \R$.  As with the underlying category, we will often denote $\mathcal R^{\mathcal V}$ by simply $R$.
\end{defn}

\subsection{\texorpdfstring{$\mathcal V$-Monoidal Categories}{}}
Given a $\mathcal V$-category, we can add a monoidal structure to get a (strict) $\mathcal V$-monoidal category.  We need the additional data of
\begin{itemize}
\item A unit object $1_{\mathcal A}$,
\item For each pair $a , b \in \mathcal A$, a tensor product object $ab \in \mathcal A$, and
\item For each $a , b , c , d \in \mathcal A$, a tensor product morphism $\blank \otimes_{\mathcal A} \blank \in \mathcal V \E \mathcal A \E a \to c \R \mathcal A \E b \to d \R \to \mathcal A \E a b \to c d \R \R$
\end{itemize}

such that the following axioms hold:
\begin{itemize}
\item Tensor unit: For all $a \in \mathcal A$, $1_{\mathcal A} a = a = a 1_{\mathcal A}$,
\item Associative on objects: For all $a , b , c \in \mathcal A$, $\E a b \R c = a \E b c \R$,
\item Unitality: For all $a , b \in \mathcal A$,
  \begin{align*}
    \begin{tikzpicture}[baseline=30, smallstring]
      \node (top) at (1,3) {$\mathcal A \E a \to b \R$};
      \node (bot) at (2,0) {$\mathcal A \E a \to b \R$};
      \node[draw, rectangle] (j) at (0,1) {$j_{1_{\mathcal A}}$};
      \node[draw, rectangle] (circ) at (1,2) {$\blank \otimes_{\mathcal A} \blank$};
      \draw (j) to[in=-90,out=90] (circ.225);
      \draw (bot) to[in=-90,out=90] (circ.-45);
      \draw (circ) to[in=-90,out=90] (top);
    \end{tikzpicture}
    =
    \begin{tikzpicture}[baseline=30, smallstring]
      \node (bot) at (0,0) {$\mathcal A \E a \to b \R$};
      \node (top) at (0,3) {$\mathcal A \E a \to b \R$};
      \draw (top) to (bot);
    \end{tikzpicture}
    =
    \begin{tikzpicture}[baseline=30, smallstring]
      \node (top) at (1,3) {$\mathcal A \E a \to b \R$};
      \node (bot) at (0,0) {$\mathcal A \E a \to b \R$};
      \node[draw, rectangle] (j) at (2,1) {$j_{1_{\mathcal A}}$};
      \node[draw, rectangle] (circ) at (1,2) {$\blank \otimes_{\mathcal A} \blank$};
      \draw (j) to[in=-90,out=90] (circ.-45);
      \draw (bot) to[in=-90,out=90] (circ.225);
      \draw (circ) to[in=-90,out=90] (top);
    \end{tikzpicture}
  \end{align*}
\item Associativity of $\E \blank \otimes \blank \R$: For all $a , b , c , d , e , f \in \mathcal A$,
  \begin{align*}
    \begin{tikzpicture}[baseline=30, smallstring]
      \node (ad) at (0,0) {$\mathcal A \E a \to d \R$};
      \node (be) at (2,0) {$\mathcal A \E b \to e \R$};
      \node (cf) at (4,0) {$\mathcal A \E c \to f \R$};
      \node (top) at (2,3) {$\mathcal A \E a b c \to d e f \R$};
      \node[draw, rectangle] (circ1) at (1,1) {$\blank \otimes_{\mathcal A} \blank$};
      \node[draw, rectangle] (circ2) at (2,2) {$\blank \otimes_{\mathcal A} \blank$};
      \draw (ad) to[in=-90,out=90] (circ1.225);
      \draw (be) to[in=-90,out=90] (circ1.-45);
      \draw (circ1) to[in=-90,out=90] (circ2.225);
      \draw (cf) to[in=-90,out=90] (circ2.-45);
      \draw (circ2) to[in=-90,out=90] (top);
    \end{tikzpicture}
    =
    \begin{tikzpicture}[baseline=30, smallstring]
      \node (ad) at (0,0) {$\mathcal A \E a \to d \R$};
      \node (be) at (2,0) {$\mathcal A \E b \to e \R$};
      \node (cf) at (4,0) {$\mathcal A \E c \to f \R$};
      \node (top) at (2,3) {$\mathcal A \E a b c \to d e f \R$};
      \node[draw, rectangle] (circ1) at (3,1) {$\blank \otimes_{\mathcal A} \blank$};
      \node[draw, rectangle] (circ2) at (2,2) {$\blank \otimes_{\mathcal A} \blank$};
      \draw (ad) to[in=-90,out=90] (circ2.225);
      \draw (be) to[in=-90,out=90] (circ1.225);
      \draw (circ1) to[in=-90,out=90] (circ2.-45);
      \draw (cf) to[in=-90,out=90] (circ1.-45);
      \draw (circ2) to[in=-90,out=90] (top);
    \end{tikzpicture}
  \end{align*}
\item Braided interchange: For all $a , b , c , d , e , f \in \mathcal A$,
  \begin{equation*}
    \label{eq:BraidedInterchange}
    \begin{tikzpicture}[baseline=50, smallstring]
      \node (a-b) at (0,0) {$\mathcal A(a \to b)$};
      \node (d-e) at (2,0) {$\mathcal A(d \to e)$};
      \node (b-c) at (4,0) {$\mathcal A(b \to c)$};
      \node (e-f) at (6,0) {$\mathcal A(e \to f)$};
      \node[draw,rectangle] (t1) at (1,2) {$\quad -\otimes-\quad $};
      \draw (a-b) to[in=-90,out=90] (t1.-135);
      \draw (d-e) to[in=-90,out=90] (t1.-45);
      \node[draw,rectangle] (t2) at (5,2) {$\quad -\otimes-\quad $};
      \draw (b-c) to[in=-90,out=90] (t2.-135);
      \draw (e-f) to[in=-90,out=90] (t2.-45);
      \node[draw,rectangle] (c) at (3,4) {$\quad - \circ - \quad $};
      \draw (t1) to[in=-90,out=90] node[left=13pt] {$\mathcal A(ad\to be)$} (c.-135);
      \draw (t2) to[in=-90,out=90] node[right=13pt] {$\mathcal A(be\to cf)$} (c.-45);
      \node (r) at (3,5.5) {$\mathcal A(ad \to cf)$};
      \draw (c) -- (r);
    \end{tikzpicture}
    =
    \begin{tikzpicture}[baseline=50, smallstring]
      \node (a-b) at (0,0) {$\mathcal A(a \to b)$};
      \node (d-e) at (2,0) {$\mathcal A(d \to e)$};
      \node (b-c) at (4,0) {$\mathcal A(b \to c)$};
      \node (e-f) at (6,0) {$\mathcal A(e \to f)$};
      \node[draw,rectangle] (t1) at (1,2) {$\quad -\circ-\quad $};
      \node[draw,rectangle] (t2) at (5,2) {$\quad -\circ-\quad $};
      \draw (a-b) to[in=-90,out=90] (t1.-135);
      \draw[knot] (b-c) to[in=-90,out=90] (t1.-45);
      \draw[knot] (d-e) to[in=-90,out=90] (t2.-135);
      \draw (e-f) to[in=-90,out=90] (t2.-45);
      \node[draw,rectangle] (c) at (3,4) {$\quad - \otimes - \quad $};
      \draw (t1) to[in=-90,out=90] node[left=13pt] {$\mathcal A(a\to c)$} (c.-135);
      \draw (t2) to[in=-90,out=90] node[right=13pt] {$\mathcal A(d\to f)$} (c.-45);
      \node (r) at (3,5.5) {$\mathcal A(ad \to cf)$};
      \draw (c) -- (r);
    \end{tikzpicture}
  \end{equation*}
\end{itemize}

\begin{defn}
  Given a $\mathcal V$-monoidal category $\mathcal A$, take the underlying category $A = \mathcal A^{\mathcal V}$.  Then $A$ inherits the monoidal structure of $\mathcal A$ via the same tensor product on objects, and for $f \in A \E a \to c \R$ and $g \in A \E b \to d \R$, $f g \coloneqq \E f g \R \circ \E \blank \otimes \blank \R$, where on the right hand side $f$ and $g$ are viewed as morphisms in $\mathcal V$.  This is the \emph{underlying monoidal category} of $\mathcal A$.
\end{defn}

\example[Monoidal self-enrichment]
As before, when $\mathcal V$ is closed we can form the $\mathcal V$-category $\widehat{\mathcal V}$. We now additionally define, for $u , v , w , x \in \mathcal V$,
\begin{align*}
  \blank \otimes_{\widehat{\mathcal V}} \blank \coloneqq
  \mate \E \begin{tikzpicture}[smallstring, baseline=30]
    \node (u) at (0,0) {$u$};
    \node (w) at (2,0) {$w$};
    \node (uv) at (4,0) {$\widehat{\mathcal V} \E u \to v \R$};
    \node (wx) at (6,0) {$\widehat{\mathcal V} \E w \to x \R$};
    \node (v) at (1.5,3) {$v$};
    \node (x) at (4.5,3) {$x$};
    \node[draw, rectangle] (ep1) at (1.5,2) {$\epsilon_{u \to v}^{\mathcal F}$};
    \node[draw, rectangle] (ep2) at (4.5,2) {$\epsilon_{w \to x}^{\mathcal F}$};
    \draw (u) to[in=-90,out=90] (ep1.225);
    \draw (uv) to[in=-90,out=90] (ep1.-45);
    \draw[knot] (w) to[in=-90,out=90] (ep2.225);
    \draw (wx) to[in=-90,out=90] (ep2.-45);
    \draw (ep1) to[in=-90,out=90] (v);
    \draw (ep2) to[in=-90,out=90] (x);
  \end{tikzpicture} \R
\end{align*}
to give $\widehat{\mathcal V}$ the structure of a $\mathcal V$-monoidal category.

\subsection{$\mathcal V$-monoidal functors and natural transformations}
We can similarly extend $\mathcal V$-functors to a monoidal setting: a strictly unital $\mathcal V$-monoidal functor $\E \mathcal R , \rho^R \R: \mathcal A \to \mathcal B$ consists of an underlying $\mathcal V$-functor $\mathcal R : \mathcal A \to \mathcal B$ with $\mathcal R \E 1_{\mathcal A} \R = 1_{\mathcal B}$ and $j_{1_{\mathcal A}}^{\mathcal A} \circ \mathcal R_{1_{\mathcal A} \to 1_{\mathcal A}} = j_{1_{\mathcal B}}^{\mathcal B}$, and a family of $1_{\mathcal V}$-graded isomorphisms $\rho_{a , b}^R \in \mathcal V \E 1_{\mathcal V} \to \mathcal B \E \mathcal R \E a \R \mathcal R \E b \R \to \mathcal R \E a b \R \R \R$ satisfying:
\begin{itemize}
\item Naturality:
  \begin{align*}
    \begin{tikzpicture}[baseline=40, smallstring]
      \node (ac) at (2,0) {$\mathcal A \E a \to c \R$};
      \node (bd) at (4,0) {$\mathcal A \E b \to d \R$};
      \node (top) at (2,4) {$\mathcal B \E \mathcal R \E a \R \mathcal R \E b \R \to \mathcal R \E c d \R \R$};
      \node[draw, rectangle] (otimes) at (3,1) {$\blank \otimes_{\mathcal A} \blank$};
      \node[draw, rectangle] (r) at (3,2) {$\mathcal R_{a b \to c d}$};
      \node[draw, rectangle] (rho) at (1,2) {$\rho_{a , b}^R$};
      \node[draw, rectangle] (circ) at (2,3) {$\blank \circ_{\mathcal B} \blank$};
      \draw (ac) to[in=-90,out=90] (otimes.225);
      \draw (bd) to[in=-90,out=90] (otimes.-45);
      \draw (otimes) to[in=-90,out=90] (r);
      \draw (r) to[in=-90,out=90] (circ.-45);
      \draw (rho) to[in=-90,out=90] (circ.225);
      \draw (circ) to[in=-90,out=90] (top);
    \end{tikzpicture}
    =
    \begin{tikzpicture}[baseline=40, smallstring]
      \node (ac) at (0,0) {$\mathcal A \E a \to c \R$};
      \node (bd) at (2,0) {$\mathcal A \E b \to d \R$};
      \node (top) at (2,4) {$\mathcal B \E \mathcal R \E a \R \mathcal R \E b \R \to \mathcal R \E c d \R \R$};
      \node[draw, rectangle] (rac) at (0,1) {$\mathcal R_{a \to c}$};
      \node[draw, rectangle] (rbd) at (2,1) {$\mathcal R_{b \to d}$};
      \node[draw, rectangle] (otimes) at (1,2) {$\blank \otimes_{\mathcal B} \blank$};
      \node[draw, rectangle] (rho) at (3,2) {$\rho_{c , d}^R$};
      \node[draw, rectangle] (circ) at (2,3) {$\blank \circ_{\mathcal B} \blank$};
      \draw (ac) to[in=-90,out=90] (rac);
      \draw (bd) to[in=-90,out=90] (rbd);
      \draw (rac) to[in=-90,out=90] (otimes.225);
      \draw (rbd) to[in=-90,out=90] (otimes.-45);
      \draw (otimes) to[in=-90,out=90] (circ.225);
      \draw (rho) to[in=-90,out=90] (circ.-45);
      \draw (circ) to[in=-90,out=90] (top);
    \end{tikzpicture}
  \end{align*}
\item Associativity:
  \begin{align*}
    \begin{tikzpicture}[baseline=40, smallstring]
      \node (top) at (2,3) {$\mathcal B \E \mathcal R \E a \R \mathcal R \E b \R \mathcal R \E c \R \to \mathcal R \E a b c \R \R$};
      \node[draw, rectangle] (j) at (0,0) {$j_{\mathcal R \E a \R}^{\mathcal B}$};
      \node[draw, rectangle] (rho1) at (2,0) {$\rho_{b , c}^R$};
      \node[draw, rectangle] (otimes) at (1,1) {$\blank \otimes_{\mathcal B} \blank$};
      \node[draw, rectangle] (rho2) at (3,1) {$\rho_{a , b c}^R$};
      \node[draw, rectangle] (circ) at (2,2) {$\blank \circ_{\mathcal B} \blank$};
      \draw (j) to[in=-90,out=90] (otimes.225);
      \draw (rho1) to[in=-90,out=90] (otimes.-45);
      \draw (otimes) to[in=-90,out=90] (circ.225);
      \draw (rho2) to[in=-90,out=90] (circ.-45);
      \draw (circ) to[in=-90,out=90] (top);
    \end{tikzpicture}
    =
    \begin{tikzpicture}[baseline=40, smallstring]
      \node (top) at (2,3) {$\mathcal B \E \mathcal R \E a \R \mathcal R \E b \R \mathcal R \E c \R \to \mathcal R \E a b c \R \R$};
      \node[draw, rectangle] (j) at (2,0) {$j_{\mathcal R \E c \R}^{\mathcal B}$};
      \node[draw, rectangle] (rho1) at (0,0) {$\rho_{a , b}^R$};
      \node[draw, rectangle] (otimes) at (1,1) {$\blank \otimes_{\mathcal B} \blank$};
      \node[draw, rectangle] (rho2) at (3,1) {$\rho_{a b , c}^R$};
      \node[draw, rectangle] (circ) at (2,2) {$\blank \circ_{\mathcal B} \blank$};
      \draw (j) to[in=-90,out=90] (otimes.-45);
      \draw (rho1) to[in=-90,out=90] (otimes.225);
      \draw (otimes) to[in=-90,out=90] (circ.225);
      \draw (rho2) to[in=-90,out=90] (circ.-45);
      \draw (circ) to[in=-90,out=90] (top);
    \end{tikzpicture}
  \end{align*}
\end{itemize}
We call $\mathcal R$ \emph{lax} if the $\rho_{a , b}^R$ are not necessarily isomorphisms, and \emph{oplax} if we have $\overline \rho_{a , b}^R : 1_{\mathcal V} \to \mathcal B \E \mathcal R \E a b \R \to \mathcal R \E a \R \mathcal R \E b \R \R$ instead, again not necessarily isomorphisms, along with the corresponding axioms in place of the above.

We can compose $\E \mathcal R , \rho^R \R : \mathcal A \to \mathcal B$ with $\E \mathscr S , \sigma^S \R : \mathcal B\to \mathscr C$ to get a $\mathcal V$-monoidal functor via the usual composition on $\mathcal V$-functors and laxitor
\begin{align*}
  \E \rho_{a , b}^R \sigma_{\mathcal R \E a \R , \mathcal R \E b \R}^S \R \circ \E \mathscr S_{\mathcal R \E a \R \mathcal R \E b \R \to \mathcal R \E a b \R} \id \R \circ \E \blank \circ_{\mathscr C} \blank \R
  =
  \begin{tikzpicture}[baseline=30, smallstring]
    \node (top) at (1,3) {$\mathscr C \E \mathcal T \E a \R \mathcal T \E b \R \to \mathcal T \E a b \R \R$};
    \node[draw, rectangle] (rho) at (2,0) {$\rho_{a , b}^R$};
    \node[draw, rectangle] (sigma) at (0,0) {$\sigma_{\mathcal R \E a \R , \mathcal R \E b \R}^S$};
    \node[draw, rectangle] (s) at (2,1) {$\mathscr S_{\mathcal R \E a \R \mathcal R \E b \R \to \mathcal R \E a b \R}$};
    \node[draw, rectangle] (circ) at (1,2) {$\blank \circ_{\mathscr C} \blank$};
    \draw (rho) to[in=-90,out=90] (s);
    \draw (sigma) to[in=-90,out=90] (circ.225);
    \draw (s) to[in=-90,out=90] (circ.-45);
    \draw (circ) to[in=-90,out=90] (top);
  \end{tikzpicture}.
\end{align*}
A $1_{\mathcal V}$-graded monoidal natural transformation $\theta : \mathcal R \to \mathscr S$ is a $1_{\mathcal V}$-graded natural transformation such that
\begin{equation}\label{vtrans mon}
  \begin{tikzpicture}[baseline=15, smallstring]
    \node (top) at (1,2) {$\mathcal B \E \mathcal R \E a \R \mathcal R \E b \R \to \mathscr S \E a b \R \R$};
    \node[draw, rectangle] (rho) at (0,0) {$\rho_{a , b}^R$};
    \node[draw, rectangle] (theta) at (2,0) {$\theta_{a b}$};
    \node[draw, rectangle] (circ) at (1,1) {$\blank \circ_{\mathcal B} \blank$};
    \draw (rho) to[in=-90,out=90] (circ.225);
    \draw (theta) to[in=-90,out=90] (circ.-45);
    \draw (circ) to[in=-90,out=90] (top);
  \end{tikzpicture}
  =
  \begin{tikzpicture}[baseline=30, smallstring]
    \node(top) at (2,3) {$\mathcal B \E \mathcal R \E a \R \mathcal R \E b \R \to \mathscr S \E a b \R \R$};
    \node[draw, rectangle] (eta) at (0,0) {$\theta_a$};
    \node[draw, rectangle] (etb) at (2,0) {$\theta_b$};
    \node[draw, rectangle] (tens) at (1,1) {$\blank \otimes_{\mathcal B} \blank$};
    \node[draw, rectangle] (sigma) at (3,1) {$\sigma_{a , b}^S$};
    \node[draw, rectangle] (circ) at (2,2) {$\blank \circ_{\mathcal B} \blank$};
    \draw (eta) to[in=-90,out=90] (tens.225);
    \draw (etb) to[in=-90,out=90] (tens.-45);
    \draw (tens) to[in=-90,out=90] (circ.225);
    \draw (sigma) to[in=-90,out=90] (circ.-45);
    \draw (circ) to[in=-90,out=90] (top);
  \end{tikzpicture}.
\end{equation}

\begin{defn}
  Given a (lax) $\mathcal V$-monoidal functor $\E \mathcal R , \rho^{\mathcal R} \R$ between $\mathcal V$-monoidal categories $\mathcal A$ and $\mathcal B$, there is a canonical ordinary (lax) monoidal functor $\E R , \rho \R$ between the underlying categories $A$ and $B$.  We take $R$ to be the underlying functor, and we take $\rho \coloneqq \rho^{\mathcal R}$; notice that for $a , b \in \mathcal A$,
  \begin{align*}
    \rho_{a , b}^{\mathcal R} \in \mathcal V \E 1_{\mathcal V} \to \mathcal B \E \mathcal R \E a \R \mathcal R \E b \R \to \mathcal R \E a b \R \R \R = B \E R \E a \R R \E b \R \to R \E a b \R \R.
  \end{align*}
  We call $\E R , \rho \R$ the \emph{underlying monoidal functor} of $\E \mathcal R , \rho^{\mathcal R} \R$; verifying the axioms of a (lax) monoidal functor is a useful exercise.
\end{defn}

\subsection{The categorified trace}
With a minor change in the notation of \cite{MR3961709}, given a $\mathcal V$-monoidal category $\mathcal A$, we define a functor $\Tr_{\mathcal A} : \mathcal A^{\mathcal V} \to \mathcal V$, where $\mathcal A^{\mathcal V}$ is the underlying category of $\mathcal A$.  On objects, we set $\Tr_{\mathcal A} \E a \R = \mathcal A \E 1_{\mathcal A} \to a \R$, and on morphisms $f \in \mathcal A^{\mathcal V} \E a \to b \R = \mathcal V \E 1_{\mathcal V} \to \mathcal A \E a \to b \R \R$ we define
\begin{align*}
  \Tr_{\mathcal A} \E f \R%
  =
  \begin{tikzpicture}[baseline=30, smallstring]
    \node (bot) at (0,0) {$\mathcal A \E 1_{\mathcal A} \to a \R$};
    \node (top) at (1,3) {$\mathcal A \E 1_{\mathcal A} \to b \R$};
    \node[draw, rectangle] (f) at (2,1) {$f$};
    \node[draw, rectangle] (circ) at (1,2) {$\blank \circ_{\mathcal A} \blank$};
    \draw (bot) to[in=-90,out=90] (circ.225);
    \draw (f) to[in=-90,out=90] (circ.-45);
    \draw (circ) to[in=-90,out=90] (top);
  \end{tikzpicture}.
\end{align*}
In order for the name ``trace'' to make sense, we would expect an isomorphism $\Tr_{\mathcal A} \E a b \R \to \Tr_{\mathcal A} \E b a \R$.  When $\mathcal A$ is rigid, we have
\begin{align*}
  \Tr_{\mathcal A} \E a b \R%
  &= \mathcal A \E 1_{\mathcal A} \to a b \R\\
  &\cong \mathcal A \E b^* \to a \R\\
  &\cong \mathcal A \E 1_{\mathcal A} \to b^{**} a \R,
\end{align*}
so we get a natural isomorphism $\Tr_{\mathcal A} \E a b \R \cong \Tr_{\mathcal A} \E b^{**} a \R$.  In the case that there is a $1_{\mathcal V}$-graded natural isomorphism $1_{\mathcal A} \cong **$ ($\mathcal A$ is $1_{\mathcal V}$-graded pivotal), this is then also isomorphic to $\mathcal A \E 1_{\mathcal A} \to b a \R = \Tr_{\mathcal A} \E b a \R$.  With this in mind, we continue the convention in \cite{MR3961709} and \cite{MR3578212} to call the map a trace.

\subsection{2-categories, 2-functors, and 2-equivalences}

\begin{defn}
  A \emph{2-category} $\mathscr A$ is a category enriched in $\Cat$. Explicitly, $\mathscr A$ consists of the data
\begin{itemize}
\item A collection of objects.  In the 2-categorical context, we will refer to objects as \emph{0-cells} to avoid ambiguity.
\item For each $A , B \in \mathscr A$, a \emph{hom-category} $\mathscr A \E A \to B \R$.  We avoid the term ``morphism'' here to avoid confusion, and refer to objects of such a hom-category simply as 1-cells.  The morphisms of a hom-category are called \emph{2-cells}.  We write $R : A \to B$ for 1-cells, and $\theta : R \Rightarrow S$ for 2-cells.  Vertical composition of 2-cells is suppressed in our notation.
\item For each $A \in \mathscr A$, a \emph{unit 1-cell} $\id_A \in \mathscr A \E A \to A \R$.
\item For each $A , B , C \in \mathscr A$, a composition bifunctor $\comph : \mathscr A \E A \to B \R \times \mathscr A \E B \to C \R$.  On 1-cells $\comph$ is the expected composition, and it is horizontal composition on 2-cells.
\end{itemize}
satisfying the following axioms:
\begin{itemize}
\item Unitality: For all $A , B \in \mathscr A$ and each $f \in \mathscr A \E A \to B \R$, $\id_A \circ f = f \circ \id_B = f$.
\item Associativity: For each $A , B , C , D \in \mathscr A$,
  \begin{equation*}
    \begin{tikzcd}
      \mathscr A \E A \to B \R \times \mathscr A \E B \to C \R \times \mathscr A \E C \to D \R \dar["\comph \times \id"] \rar ["\id \times \comph"] & \mathscr A \E A \to B \R \times \mathscr A \E B \to D \R \dar["\comph"]\\
      \mathscr A \E A \to C \R \times \mathscr A \E C \to D \R \rar["\comph"] & \mathscr A \E A \to D \R
    \end{tikzcd}
  \end{equation*}
  commutes.
\end{itemize}
\end{defn}
\begin{defn} \label{2-functors}
A \emph{2-functor} $P : \mathscr A \to \mathscr B$ between 2-categories $\mathscr A$ and $\mathscr B$ is a $\Cat$-functor between $\Cat$-categories. Explicitly, $P$ consists of:
\begin{itemize}
\item For each 0-cell $A \in \mathscr A$, a 0-cell $P \E A \R$ in $\mathscr B$.
\item For each pair of 0-cells $A , B$ in $\mathscr A$, a functor
  \begin{align*}
    P_{A \to B} : \mathscr A \E A \to B \R \to \mathscr B \E P \E A \R \to P \E B \R \R
  \end{align*}
\end{itemize}
such that
\begin{itemize}
\item For each 0-cell $\mathcal A$ in $\mathscr A$, $P_{\mathcal A \to \mathcal A} \E \id_{\mathcal A} \R = \id_{P \E \mathcal A \R}$.
\item the composition functor is preserved, i.e., given 0-cells $\mathcal A , \mathcal B , \mathcal C$ in $\mathscr A$:
  \begin{itemize}
  \item For 1-cells $\mathcal R : \mathcal A \to \mathcal B$ and $\mathcal S : \mathcal B \to \mathcal C$,
    \begin{align*}
      P_{\mathcal A \to \mathcal B} \E \mathcal R \R \circ P_{\mathcal B \to \mathcal C} \E \mathcal S\R = P_{\mathcal A \to \mathcal C} \E \mathcal R \circ \mathcal S \R.
    \end{align*}
  \item For 1-cells $\mathcal R_1 , \mathcal R_2 : \mathcal A \to \mathcal B$ and $\mathcal S_1 , \mathcal S_2 : B \to \mathcal C$, and 2-cells $\theta : \mathcal R_1 \Rightarrow \mathcal R_2$ and $\psi : \mathcal S_1 \Rightarrow \mathcal S_2$,
    \begin{align*}
      P_{\mathcal A \to \mathcal C} \E \theta \psi \R = P_{\mathcal A \to \mathcal B} \E \theta \R P_{\mathcal B \to \mathcal C} \E \psi \R.
    \end{align*}
  \end{itemize}
\end{itemize}
\end{defn}

\begin{defn}[\cite{2002.06055}]
  We say that a 2-functor $P: \mathscr A \to \mathscr B$ is a \emph{2-equivalence} if it is:
\begin{itemize}
\item Essentially surjective: For every 0-cell $B \in \mathscr B$, there is a 0-cell $A \in \mathscr A$ and an equivalence of categories $A \simeq P \E A \R$.
\item Fully faithful: For each pair of 0-cells $A , B \in \mathscr A$, $P_{A \to B}$ is an equivalence of categories
\end{itemize}
\end{defn}


\section{\texorpdfstring{The 2-categories $\vmoncat$ and $\vmodtens$}{}} \label{Define bicategories}
In this section we will construct the 2-categories $\vmoncat$ and $\vmodtens$, based on the 0-cells studied in \cite{MR3961709}.  First, however, we record some terminology from \cite{1809.09782, 1910.03178}.
\begin{defn} [\cite{MR3961709, 1809.09782, 1910.03178}]
  We call a $\mathcal V$-category $\mathcal A$
  \begin{itemize}
  \item \emph{weakly tensored} if for each $a \in \mathcal A$, the representable functor $\mathcal A \E a \to \blank \R : A \to \mathcal V$ admits a left adjoint.
  \item \emph{tensored} if for each $a \in \mathcal A$, the $\mathcal V$-representable functor $\mathcal A \E a \to \blank \R : \mathcal A \to \widehat{\mathcal V}$ admits a left $\mathcal V$-adjoint.
  \end{itemize}
  Note that we must be able to form the self-enrichment $\widehat{\mathcal V}$ for the definition of a tensored $\mathcal V$-category to make sense; this requires $\mathcal V$ to be \emph{closed} (see \cite{1809.09782}).  In this article we assume $\mathcal V$ is rigid, and being rigid is a stronger property of a $\mathcal V$-category than being closed.
\end{defn}
\begin{defn} [\cite{1910.03178} defn 2.6, HPT]
  A \emph{$\mathcal V$-module tensor category} is a pair $( A , \mathcal F_A^Z )$ where $A$ is a monoidal category and $\mathcal F_A^Z : \mathcal V \to Z \E A \R$ is an oplax, strongly unital braided monoidal functor. We call the $\mathcal V$-module tensor category \emph{oplax} or \emph{strongly unital} if $\mathcal F_A^Z$ is oplax or strongly unital, respectively.\\
  We say that a $\mathcal V$-module tensor category $( A , \mathcal F_A^Z )$ is
  \begin{itemize}
  \item \emph{rigid} if $A$ is rigid,
  \item \emph{weakly tensored} if $\mathcal F \coloneqq \mathcal F^Z \circ \Forget_Z$ admits a right adjoint, with $\Forget_Z : Z \E A \R \to A$ the forgetful functor,
  \item \emph{tensored} if it is weakly tensored and $\mathcal F$ is strong monoidal.
  \end{itemize}
\end{defn}
\begin{defn} \label{Define vmoncat}
  The following data defines a 2-category, which we call $\vmoncat$:
  \begin{itemize}
  \item 0-Cells are rigid $\mathcal V$-monoidal categories ($\mathcal A$, $\mathcal B$, etc.) such that the functor $x \mapsto \mathcal A \E 1_{\mathcal A} \to x \R$ admits a left adjoint.
  \item For each pair of 0-cells $\mathcal A , \mathcal B$, the hom-category $\vmoncat \E \mathcal A \to \mathcal B \R$ consists of
    \begin{itemize}
    \item 1-cells: Strictly unital lax $\mathcal V$-monoidal functors $\E \mathcal R , \rho^R \R : \mathcal A \to \mathcal B$, where $\rho^R$ is the laxitor for $\mathcal R$.
    \item 2-cells: $1_{\mathcal V}$-graded monoidal natural transformations $\theta : \E \mathcal R , \rho^R \R \to \E \mathcal S , \sigma^S \R$.
    \item Composition is the usual vertical composition of $1_{\mathcal V}$-graded natural transformations:
      \begin{align*}
        \E \theta \circ \varphi \R_a =
        \begin{tikzpicture}[baseline=15, smallstring]
          \node (top) at (1,2) {$\mathcal B \E \mathcal R \E a \R \to \mathcal S \E a \R \R$};
          \node[draw, rectangle] (theta) at (0,0) {$\theta_a$};
          \node[draw, rectangle] (ed) at (2,0) {$\varphi_a$};
          \node[draw, rectangle] (circ) at (1,1) {$\blank \circ_{\mathcal B} \blank$};
          \draw (theta) to[in=-90,out=90] (circ.225);
          \draw (ed) to[in=-90,out=90] (circ.-45);
          \draw (circ) to[in=-90,out=90] (top);
        \end{tikzpicture}
      \end{align*}
    \item Unit 2-cells are the identity $1_{\mathcal V}$-graded natural transformations.
    \end{itemize}
  \item Composition functors:
    \begin{itemize}
    \item Two 1-cells $\E \mathcal R , \rho^R \R : \mathcal A \to \mathcal B$ and $\E \mathcal S , \sigma^S \R : \mathcal B \to \mathcal C$ have composition defined via $\E \mathcal R \circ \mathcal S \R \E a \R \coloneqq \mathcal S \E \mathcal R \E a \R \R$, $\E \mathcal R \circ \mathcal S \R_{a \to b} \coloneqq \mathcal R_{a \to b} \circ \mathcal S_{R \E a \R \to \mathcal R \E b \R}$, and composite laxitor
      \begin{align*}
        \begin{tikzpicture}[baseline=15, smallstring]
          \node (top) at (1,2) {$\mathcal C \E R \E a \R \to S \E a \R \R$};
          \node[draw, rectangle] (sigma) at (0,0) {$\sigma_{R \E a \R , R \E b \R}^S$};
          \node[draw, rectangle] (rho) at (2,0) {$\mathcal S \E \rho_{a , b}^R \R$};
          \node[draw, rectangle] (circ) at (1,1) {$\blank \circ_{\mathcal C} \blank$};
          \draw (sigma) to[in=-90,out=90] (circ.225);
          \draw (rho) to[in=-90,out=90] (circ.-45);
          \draw (circ) to[in=-90,out=90] (top);
        \end{tikzpicture}
      \end{align*}
    \item If $\mathcal A , \mathcal B , \mathcal C \in \vmoncat$, $\mathcal R_1 , \mathcal R_2 \in \vmoncat \E \mathcal A \to \mathcal B \R$, and $\mathcal S_1 , \mathcal S_2 \in \vmoncat \E \mathcal B \to \mathcal C \R$, the horizontal composition of $\theta : \mathcal R_1 \Rightarrow \mathcal R_2$ and $\varphi : \mathcal S_1 \Rightarrow \mathcal S_2$ is given by
      \begin{align*}
        \E \theta \varphi \R_a = 
        \begin{tikzpicture}[baseline=25, smallstring]
          \node (top) at (1,3) {$\mathcal C \E S_1 \E R_1 \E a \R \R \to S_2 \E R_2 \E a \R \R \R$};
          \node[draw, rectangle] (theta) at (0,0) {$\theta_a$};
          \node[draw, rectangle] (ed) at (2,0) {$\varphi_{R_1 \E a \R}$};
          \node[draw, rectangle] (S) at (0,1) {$\E \mathcal S_1 \R_{\mathcal R_1 \E a \R \to \mathcal R_2 \E a \R}$};
          \node[draw, rectangle] (circ) at (1,2) {$\blank \circ_{\mathcal C} \blank$};
          \draw (ed) to[in=-90,out=90] (circ.-45);
          \draw (theta) to[in=-90,out=90] (S);
          \draw (S) to[in=-90,out=90] (circ.225);
          \draw (circ) to[in=-90,out=90] (top);
        \end{tikzpicture}
      \end{align*}
    \end{itemize}
  \item Lastly, unit 1-cells are the usual identity $\mathcal V$-functor $\id_{\mathcal A}: \mathcal A \to \mathcal A$ along with the identity natural transformations as the tensorator.
  \end{itemize}
\end{defn}

\begin{remark}
  The definition above of $\vmoncat$ does in fact define a 2-category.
  \begin{itemize}
  \item First, $\vmoncat \E \mathcal A \to \mathcal B \R$ is a category:
    \begin{itemize}
    \item Unit 2-cells are the usual identity natural transformations, and thus composition preserves units.
    \item Vertical composition of 2-cells is associative by Lemma 2.2 of \cite{MR3961709}.
    \end{itemize}
  \item Composition functors are unital because unit 1-cells are identity $\mathcal V$-monoidal functors, with their usual composition.
  \item Composition functors are associative: Horizontal composition of 2-cells is associative by Lemma 2.3 of \cite{MR3961709}, and composition of 1-cells is just ordinary $\mathcal V$-monoidal functor composition, which is associative.
  \end{itemize}
\end{remark}


\begin{defn} \label{Define vmodtens}
  We define a second 2-category, which we call $\vmodtens$, from the following data:
  \begin{itemize}
  \item 0-Cells are weakly tensored $\mathcal V$-module tensor categories $( A , \mathcal F_A^Z )$.
  \item For each pair of 0-cells $( A , \mathcal F_A^Z ) , ( B , \mathcal F_B^Z )$, we define the hom-category $\vmodtens \E ( A , \mathcal F_A^Z ) \to ( B , \mathcal F_B^Z ) \R$ as follows:
    \begin{itemize}
    \item 1-cells are triples $\E R , \rho , r \R : ( A , \mathcal F_A^Z ) \to ( B , \mathcal F_B^Z )$, where $\E R , \rho \R$ is a strictly unital lax monoidal functor, and $r: \mathcal F_B \Rightarrow \mathcal F_A \circ R$ is a strictly unital monoidal natural transformation, i.e., $R \E 1_A \R = 1_B$ and $r_{1_{\mathcal V}} = \id_{1_B}$; they must also satisfy the half-braiding coherence and action coherence conditions:
      \begin{align*}
        \begin{tikzpicture}[baseline=40, smallstring]
          \node (top) at (1,4) {$R \E \mathcal F_A \E v \R a \R$};
          \node (ra) at (0,0) {$R \E a \R$};
          \node (v) at (2,0) {$\mathcal F_B \E v \R$};
          \node[draw, rectangle] (r) at (0,2) {$r_v$};
          \node[draw, rectangle] (rho) at (1,3) {$\rho_{\mathcal F_A \E v \R , a}$};
          \node[draw, rectangle] (e) at (1,1) {$e_{R \E a \R , \mathcal F_B \E v \R}$};
          \draw (v) to[in=-90,out=90] (e.-45);
          \draw (ra) to[in=-90,out=90] (e.225);
          \draw (e.135) to[in=-90,out=90] (r);
          \draw (e.45) to[in=-90,out=90] (rho.-45);
          \draw (r) to[in=-90,out=90] (rho.225);
          \draw (rho) to[in=-90,out=90] (top);
        \end{tikzpicture}
        =
        \begin{tikzpicture}[baseline=40, smallstring]
          \node (top) at (1,4) {$R \E \mathcal F_A \E v \R a \R$};
          \node (ra) at (0,0) {$R \E a \R$};
          \node (v) at (2,0) {$\mathcal F_B \E v \R$};
          \node[draw, rectangle] (r) at (2,1) {$r_v$};
          \node[draw, rectangle] (rho) at (1,2) {$\rho_{a , \mathcal F_A \E v \R}$};
          \node[draw, rectangle] (e) at (1,3) {$R \E e_{a , \mathcal F_A \E v \R} \R$};
          \draw (v) to[in=-90,out=90] (r);
          \draw (ra) to[in=-90,out=90] (rho.225);
          \draw (r) to[in=-90,out=90] (rho.-45);
          \draw (rho) to[in=-90,out=90] (e);
          \draw (e) to[in=-90,out=90] (top);
        \end{tikzpicture}
        \hspace{3em}
        \tand
        \hspace{3em}
        \begin{tikzpicture}[baseline=40, smallstring]
          \node (top) at (0,3) {$R \E \mathcal F_A \E u \R \mathcal F_A \E v \R \R$};
          \node (bot) at (0,0) {$\mathcal F_B \E u v \R$};
          \node[draw, rectangle] (r) at (0,1) {$r_{uv}$};
          \node[draw, rectangle] (mu) at (0,2) {$R \E \mu_{u , v}^{\mathcal F_A} \R$};
          \draw (bot) to[in=-90,out=90] (r);
          \draw (r) to[in=-90,out=90] (mu);
          \draw (mu) to[in=-90,out=90] (top);
        \end{tikzpicture}
        =
        \begin{tikzpicture}[baseline=45, smallstring]
          \node (top) at (1,4) {$R \E \mathcal F_A \E u \R \mathcal F_A \E v \R \R$};
          \node (bot) at (1,0) {$\mathcal F_B \E u v \R$};
          \node[draw, rectangle] (mu) at (1,1) {$\mu_{u , v}^{\mathcal F_B}$};
          \node[draw, rectangle] (r1) at (0,2) {$r_u$};
          \node[draw, rectangle] (r2) at (2,2) {$r_v$};
          \node[draw, rectangle] (rho) at (1,3) {$\rho_{\mathcal F_A \E u \R , \mathcal F_A \E v \R}$};
          \draw (bot) to[in=-90,out=90] (mu);
          \draw (mu.135) to[in=-90,out=90] (r1);
          \draw (mu.45) to[in=-90,out=90] (r2);
          \draw (r1) to[in=-90,out=90] (rho.225);
          \draw (r2) to[in=-90,out=90] (rho.-45);
          \draw (rho) to[in=-90,out=90] (top);
        \end{tikzpicture}
      \end{align*}
      where $e_{a,\mathcal F_A \E v \R} \in \mathcal B \E a \mathcal F_A \E v \R \to \mathcal F_A \E v \R a \R$ is the half-braiding on $\mathcal F_A \E v \R$ (from the Drinfeld center), and similarly for $e_{R \E a \R , \mathcal F_B \E v \R}$.
    \item 2-cells $\E R , \rho , r \R \Rightarrow \E S , \sigma , s \R$ are monoidal natural transformations $\Theta : \E R , \rho \R \to \E S , \sigma \R$ such that $r_v \circ \Theta_{\mathcal F_A \E v \R} = s_v$.
    \item Composition is the usual vertical composition of natural transformations: $\E \Theta \compv \varphi \R_v = \Theta_v \circ \varphi_v$.
    \item Unit 2-cells are the identity natural transformations.
    \end{itemize}
  \item Composition functors:
    \begin{itemize}
    \item Given $\E R , \rho , r \R : A \to B$ and $\E S , \sigma , s \R :  B \to C$, we form their composite via functor $R \circ S$, laxitor $\sigma_{R \E a \R , R \E b \R} \circ S \E \rho_{a , b} \R$ and coherence $r_v \circ S \E s_v \R$.
    \item If $A , B , C \in \vmodtens$, $R_1 , R_2 \in \vmodtens \E A \to B \R$, $S_1 , S_2 \in \vmodtens \E B \to C \R$, and $\Theta : R_1 \Rightarrow R_2$ and $\varphi : S_1 \Rightarrow S_2$ are 2-cells, the horizontal composite $\Theta \varphi$ is defined via
      \begin{align*}
        \E \Theta \varphi \R_a = \varphi_{R_1 \E a \R} \circ S_2 \E \Theta_a \R.
      \end{align*}
    \end{itemize}
  \item Unit 1-cells:
    For each 0-cell $( A , \mathcal F_A^Z ) \in \vmodtens$ we define the unit 1-cell to be the triple consisting of the identity functor on $A$, the identity laxitor, and the identity natural transformation on $\mathcal F_A$. 
    
  \end{itemize}

\end{defn}

\begin{remark}
  With the above definition, $\vmodtens$ is a 2-category:
  \begin{itemize}
  \item For each pair $\E A , \mathcal F_A^Z \R , \E B , \mathcal F_B^Z \R$ of 0-cells in $\vmodtens$, $\vmodtens \E \E A , \mathcal F_A^Z \R \to \E B , \mathcal F_B^Z \R \R$ is a category:
    \begin{itemize}
    \item Unit 2-cells are units under composition, since they are identity natural transformations with the usual composition of natural transformations.
    \item Composition of natural transformations is associative.
    \end{itemize}
  \item Composition functors are unital because the unit 1-cells are identity functors with identity laxitors and the identity natural transformation for action-coherence.
  \item Composition functors are associative because horizontal composition of natural transformations, composition of monoidal functors, and vertical composition of natural transformations are all associative.
  \end{itemize}
\end{remark}


\section{Calculations using mates}\label{mate calc}
Given an adjunction $\mathcal A \E \mathcal L \E x \R \to b \R \cong \mathcal B \E x \to \mathcal R \E b \R \R$ and a morphism $f \in \mathcal A \E \mathcal L \E x \R \to b \R$, the \emph{mate} of $f$ is the corresponding morphism $\mate \E f \R \in \mathcal B \E x \to \mathcal R \E b \R \R$.

\begin{remark}
  A large number of our discussions focus on the following adjunction (Adjunction 4.2 in \cite{MR3961709}): Let $\mathcal A$ be a 0-cell in $\vmoncat$. Then for each $a , b \in \mathcal A$ and each $v \in \mathcal V$, we have the adjunction
  
  \begin{equation}\label{Main Adjunction} 
    A \E a \mathcal F_A \E v \R \to b \R \cong \mathcal V \E v \to \mathcal A \E a \to b \R \R
  \end{equation}

  where $A = \mathcal A^{\mathcal V}$ is the underlying category of $\mathcal A$.  In particular, note that when $v = 1_{\mathcal V}$ the adjunction becomes $\mathcal A^{\mathcal V} \E a \to b \R \cong \mathcal V \E 1_{\mathcal V} \to \mathcal A \E a \to b \R \R$.  In this case, we actually have equality by definition of $\mathcal A^{\mathcal V}$, and, although slightly redundant, we often use a superscript to keep track of which side of this equality adjunction a morphism lives on. There is no ambiguity even without a notational difference, as a quick check of source and target will clear up any confusion.
\end{remark}
\begin{defn}
  We define the \emph{unit} of this adjunction for $v \in \mathcal V$ as
  \begin{align*}
    \eta_v^{\mathcal F_A} \coloneqq \mate \E \id_{\mathcal F_A \E v \R} \R
  \end{align*}
  and the \emph{counit} of this adjunction for $a , b \in A$ as
  \begin{align*}
    \epsilon_{a \to b}^{\mathcal F_A} \coloneqq \mate \E \id_{\mathcal A \E a \to b \R} \R,
  \end{align*}
  both under the corresponding version of Adjunction \ref{Main Adjunction}.
\end{defn}
\begin{nota}
  We extend a useful notation from \cite{MR3961709}:
  \begin{align*}
    \DF a \to b ; c \to d ; \cdots \FD_{\mathcal F}^{\mathcal A} \coloneqq \mathcal F \E \mathcal A \E a \to b \R \mathcal A \E c \to d \R \cdots \R,
  \end{align*}
  where $\mathcal A$ is a $\mathcal V$-monoidal category, and $\mathcal F$ is any functor from $\mathcal V$.  Note that $\mathcal F$ need not be specifically $\mathcal F_A$, e.g., we use
  \begin{align*}
    \DF a \to b ; c \to d ; \cdots \FD_{\mathcal F_B}^{\mathcal A} \coloneqq \mathcal F_B \E \mathcal A \E a \to b \R \mathcal A \E c \to d \R \cdots \R.
  \end{align*}
\end{nota}
\begin{remark}
  Under Adjunction \ref{Main Adjunction}, the mate of $\blank \circ_{\mathcal A} \blank \in \mathcal V \E \mathcal A \E a \to b \R \mathcal A \E b \to c \R \to \mathcal A \E a \to c \R \R$ is
  \begin{align*}
    \begin{tikzpicture}[smallstring]
      \node (a) at (0,0) {$a$};
      \node (bot) at (2,0) {$\DF a \to b ; b \to c \FD_{\mathcal F_A}^{\mathcal A}$};
      \node (top) at (2,4) {$c$};
      \node[draw, rectangle] (mu) at (2,1) {$\mu_{\mathcal A \E a \to b \R , \mathcal A \E b \to c \R}^{\mathcal F_A}$};
      \node[draw, rectangle] (ep1) at (1,2) {$\epsilon_{a \to b}^{\mathcal F_A}$};
      \node[draw, rectangle] (ep2) at (2,3) {$\epsilon_{b \to c}^{\mathcal F_A}$};
      \draw (a) to[in=-90,out=90] (ep1.225);
      \draw (bot) to[in=-90,out=90] (mu);
      \draw (mu.135) to[in=-90,out=90] (ep1.-45);
      \draw (mu.45) to[in=-90,out=90] (ep2.-45);
      \draw (ep1) to[in=-90,out=90] (ep2.225);
      \draw (ep2) to[in=-90,out=90] (top);
    \end{tikzpicture},
  \end{align*}
  and the mate of $\E \blank \otimes_{\mathcal A} \blank \R : \mathcal A \E a \to b \R \mathcal A \E c \to d \R \to \mathcal A \E a c \to b d \R$ is
  \begin{align*}
    \begin{tikzpicture}[smallstring]
      \node (a) at (0,0) {$a$};
      \node (c) at (2,0) {$c$};
      \node (bot) at (4.5,0) {$\DF a \to b ; c \to d \FD_{\mathcal F_A}^{\mathcal A}$};
      \node (b) at (2,4) {$b$};
      \node (d) at (3.5,4) {$d$};
      \node[draw, rectangle] (mu) at (4.5,1) {$\mu_{\mathcal A \E a \to b \R , \mathcal A \E c \to d \R}^{\mathcal F_A}$};
      \node[draw, rectangle] (ep1) at (2,3) {$\epsilon_{a \to b}^{\mathcal F_A}$};
      \node[draw, rectangle] (ep2) at (3.5,3) {$\epsilon_{c \to d}^{\mathcal F_A}$};
      \draw (a) to[in=-90,out=90] (ep1.225);
      \draw (mu.135) to[in=-90,out=90] (ep1.-45);
      \draw[knot] (c) to[in=-90,out=90] (ep2.225);
      \draw(mu.45) to[in=-90,out=90] (ep2.-45);
      \draw (bot) to[in=-90,out=90] (mu);
      \draw (ep1) to[in=-90,out=90] (b);
      \draw (ep2) to[in=-90,out=90] (d);
    \end{tikzpicture}.
  \end{align*}
  Note that when going from $\vmodtens$ to $\vmoncat$ this is a definition, and in the other direction it is a proposition requiring proof (Proposition 4.9 in \cite{MR3961709} and Proposition 5.3 in \cite{MR3961709}).
\end{remark}

We now record several useful lemmas in computing mates; we start with a consequence of naturality.
\begin{lem}
  \label{Composition Lemma}
  Let $\mathcal A \E \mathscr L \E x \R \to b \R \cong \mathcal B \E x \to \mathscr R \E b \R \R$ be an adjunction.  If $f_1 \in \mathcal A \E \mathscr L \E x \R \to b \R$ and $f_2 \in \mathcal A \E b \to c \R$, then $\mate \E f_1 \circ f_2 \R = \mate \E f_1 \R \circ \mathscr R \E f_2 \R$.  Similarly, if $g_1 \in \mathcal B \E x \to y \R$ and $g_2 \in \mathcal B \E y \to \mathscr R \E b \R \R$, then $\mate \E g_1 \circ g_2 \R = \mathscr L \E g_1 \R \circ \mate \E g_2 \R$. In diagrams,
  \begin{align*}
    \mate \E
    \begin{tikzpicture}[baseline=25, smallstring]
      \node (ly) at (0,0) {$\mathscr L \E y \R$};
      \node (c) at (0,3) {$c$};
      \node[draw, rectangle] (f1) at (0,1) {$f_1$};
      \node[draw, rectangle] (f2) at (0,2) {$f_2$};
      \draw (ly) to (f1);
      \draw (f1) to (f2);
      \draw (f2) to (c);
    \end{tikzpicture} \R
    =
    \begin{tikzpicture}[baseline=25, smallstring]
      \node (y) at (0,0) {$y$};
      \node (rc) at (0,3) {$\mathscr R \E c \R$};
      \node[draw, rectangle] (f1) at (0,1) {$\mate \E f_1 \R$};
      \node[draw, rectangle] (f2) at (0,2) {$\mathscr R \E f_2 \R$};
      \draw (ly) to (f1);
      \draw (f1) to (f2);
      \draw (f2) to (c);
    \end{tikzpicture}
    \quad \qtand \quad
    \mate \E
    \begin{tikzpicture}[baseline=25, smallstring]
      \node (x) at (0,0) {$x$};
      \node (rb) at (0,3) {$\mathscr R \E b \R$};
      \node[draw, rectangle] (g1) at (0,1) {$g_1$};
      \node[draw, rectangle] (g2) at (0,2) {$g_2$};
      \draw (x) to (g1);
      \draw (g1) to (g2);
      \draw (g2) to (rb);
    \end{tikzpicture} \R
    =
    \begin{tikzpicture}[baseline=25, smallstring]
      \node (lx) at (0,0) {$\mathscr L \E x \R$};
      \node (b) at (0,3) {$b$};
      \node[draw, rectangle] (g1) at (0,1) {$\mathscr L \E g_1 \R$};
      \node[draw, rectangle] (g2) at (0,2) {$\mate \E g_2 \R$};
      \draw (lx) to (g1);
      \draw (g1) to (g2);
      \draw (g2) to (b);
    \end{tikzpicture}
  \end{align*}.
\end{lem}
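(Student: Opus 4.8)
The plan is to observe that the mate operation is precisely the hom-set bijection $\Phi \colon \mathcal{A}(\mathscr{L}(x) \to b) \xrightarrow{\sim} \mathcal{B}(x \to \mathscr{R}(b))$ supplied by the adjunction $\mathscr{L} \dashv \mathscr{R}$, so that each of the two claimed identities is simply an instance of the naturality of $\Phi$ (respectively of $\Phi^{-1}$) in one of its two arguments. No structure beyond the defining naturality of an adjunction is needed.

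For the first identity I would fix $x$ and use naturality of $\Phi$ in the covariant variable. A morphism $f_2 \in \mathcal{A}(b \to c)$ acts on the source hom-set $\mathcal{A}(\mathscr{L}(x) \to \blank)$ by post-composition $\blank \circ f_2$, and on the target hom-set $\mathcal{B}(x \to \mathscr{R}(\blank))$ by post-composition with $\mathscr{R}(f_2)$; the naturality square for $\Phi$ then reads $\Phi(f_1 \circ f_2) = \Phi(f_1) \circ \mathscr{R}(f_2)$, which is exactly the assertion once we rewrite $\Phi = \mate$.

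For the second identity I would instead invoke naturality of the inverse bijection $\Phi^{-1} \colon \mathcal{B}(x \to \mathscr{R}(b)) \to \mathcal{A}(\mathscr{L}(x) \to b)$ in the contravariant variable: a morphism $g_1 \in \mathcal{B}(x \to y)$ acts by pre-composition on $\mathcal{B}(\blank \to \mathscr{R}(b))$ and by pre-composition with $\mathscr{L}(g_1)$ on $\mathcal{A}(\mathscr{L}(\blank) \to b)$, yielding $\mate(g_1 \circ g_2) = \mathscr{L}(g_1) \circ \mate(g_2)$. Equivalently --- and more in keeping with the explicit string-diagrammatic computations of this section --- one may record the standard unit/counit descriptions $\mate(f) = \eta_x \circ \mathscr{R}(f)$ and $\mate(g) = \mathscr{L}(g) \circ \epsilon_b$, whereupon both identities drop out immediately from functoriality of $\mathscr{R}$ and of $\mathscr{L}$ together with associativity of composition. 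The calculation is entirely routine; the only point that requires attention is the bookkeeping forced by the left-to-right composition convention in effect, which is what dictates that the first identity pairs with post-composition and $\mathscr{R}$ while the second pairs with pre-composition and $\mathscr{L}$, rather than the reverse. There is no substantive obstacle.
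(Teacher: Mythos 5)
Your proof is correct and matches the paper's intent exactly: the paper states this lemma with no written proof, introducing it only as ``a consequence of naturality,'' which is precisely your argument via naturality of the adjunction bijection $\Phi$ in its covariant and contravariant variables (with the unit/counit formulas $\mate(f)=\eta_x\circ\mathscr R(f)$ and $\mate(g)=\mathscr L(g)\circ\epsilon_b$ giving the same computation). Your bookkeeping of the left-to-right composition convention is also handled correctly.
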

An easy consequence of Lemma \ref{Composition Lemma} also turns out to be quite useful:
\begin{lem} \label{unit-counit mates}
  Take $a , b \in \mathcal A$, $v \in \mathcal V$, and $f \in \mathcal V \E v \to \mathcal A \E a \to b \R \R$.  Then under Adjunction \ref{Main Adjunction} we have
  \begin{align*}
    \mate \E
    \begin{tikzpicture}[baseline=20, smallstring]
      \node (bot) at (0,0) {$v$};
      \node (top) at (0,2) {$\mathcal A \E a \to b \R$};
      \node[draw, rectangle] (f) at (0,1) {$f$};
      \draw (bot) to (f);
      \draw (f) to (top);
    \end{tikzpicture} \R
    =
    \begin{tikzpicture}[baseline=25, smallstring]
      \node (a) at (0,0) {$a$};
      \node (v) at (2,0) {$\mathcal F_A \E v \R$};
      \node (b) at (1,3) {$b$};
      \node[draw, rectangle] (f) at (2,1) {$\mathcal F_A \E f \R$};
      \node[draw, rectangle] (epsilon) at (1,2) {$\epsilon_{a \to b}^{\mathcal F_A}$};
      \draw (a) to[in=-90,out=90] (epsilon.225);
      \draw (v) to[in=-90,out=90] (f);
      \draw (f) to[in=-90,out=90] (epsilon.-45);
      \draw (epsilon) to[in=-90,out=90] (b);
    \end{tikzpicture}.
  \end{align*}
  In the case that $v = 1_{\mathcal V}$, the expressions above are equal to $f$, viewed as a morphism in $A \E a \to b \R$. Similarly, if $v \in \mathcal V$, $a \in A$, and $g \in A \E \mathcal F_A \E v \R \to a \R = \mathcal V \E 1_{\mathcal V} \to \mathcal A \E \mathcal F_A \E v \R \to a \R \R$, then
  \begin{align*}
    \mate \E
    \begin{tikzpicture}[baseline=20, smallstring]
      \node (bot) at (0,0) {$\mathcal F \E v \R$};
      \node (top) at (0,2) {$a$};
      \node[draw, rectangle] (g) at (0,1) {$g$};
      \draw (bot) to (g);
      \draw (g) to (top);
    \end{tikzpicture} \R
    =
    \begin{tikzpicture}[baseline=25, smallstring]
      \node (v) at (0,0) {$v$};
      \node (a) at (1,3) {$\mathcal A \E 1_{\mathcal A} \to a \R$};
      \node[draw, rectangle] (g) at (2,1) {$g$};
      \node[draw, rectangle] (circ) at (1,2) {$\blank \circ_{\mathcal A} \blank$};
      \node[draw, rectangle] (eta) at (0,1) {$\eta_v^{\mathcal F_A}$};
      \draw (v) to[in=-90,out=90] (eta);
      \draw (eta) to[in=-90,out=90] (circ.225);
      \draw (g) to[in=-90,out=90] (circ.-45);
      \draw (circ) to[in=-90,out=90] (a);
    \end{tikzpicture}.
  \end{align*}
\end{lem}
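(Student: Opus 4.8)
The plan is to derive both equalities directly from Lemma \ref{Composition Lemma}, by writing the morphism being mated as a composite with an identity and then recognizing the relevant adjoint functor applied to that identity as the unit or counit of Adjunction \ref{Main Adjunction}. Throughout, the adjunction in play is Adjunction \ref{Main Adjunction}, whose left adjoint $\mathscr L$ sends $v \mapsto a\mathcal F_A(v)$, acting on a morphism $f$ as $\id_a \otimes \mathcal F_A(f)$, and whose right adjoint $\mathscr R$ sends $b \mapsto \mathcal A(a \to b)$.

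For the first identity, I would observe that $f \in \mathcal V(v \to \mathcal A(a\to b))$ lies on the $\mathcal V$-side of the adjunction and factor it as $f = f \circ \id_{\mathcal A(a\to b)}$. Applying the ``$g$'' half of Lemma \ref{Composition Lemma} (with $g_1 = f$ and $g_2 = \id_{\mathcal A(a\to b)}$, so that $y = \mathcal A(a\to b) = \mathscr R(b)$) gives $\mate(f) = \mathscr L(f) \circ \mate(\id_{\mathcal A(a\to b)})$. By definition $\mate(\id_{\mathcal A(a\to b)}) = \epsilon_{a\to b}^{\mathcal F_A}$, and $\mathscr L(f) = \id_a \otimes \mathcal F_A(f)$; stringing these together reproduces the claimed diagram. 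The special case $v = 1_{\mathcal V}$ then follows because strong unitality gives $\mathcal F_A(1_{\mathcal V}) = 1_{\mathcal A}$, so $\mathscr L$ acts as the identity on that argument and Adjunction \ref{Main Adjunction} degenerates to the defining equality $A(a\to b) = \mathcal V(1_{\mathcal V}\to\mathcal A(a\to b))$, under which $\mate(f) = f$.

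For the second identity, $g \in A(\mathcal F_A(v)\to a)$ lies on the $A$-side of Adjunction \ref{Main Adjunction} taken with first slot $1_{\mathcal A}$, so that $\mathscr L(v) = \mathcal F_A(v)$ and $\mathscr R = \Tr_{\mathcal A} = \mathcal A(1_{\mathcal A}\to\blank)$. I would factor $g = \id_{\mathcal F_A(v)} \circ g$ and apply the ``$f$'' half of Lemma \ref{Composition Lemma} (with $f_1 = \id_{\mathcal F_A(v)}$, $f_2 = g$, $b = \mathcal F_A(v)$, $c = a$) to obtain $\mate(g) = \mate(\id_{\mathcal F_A(v)}) \circ \mathscr R(g) = \eta_v^{\mathcal F_A}\circ \Tr_{\mathcal A}(g)$. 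Unwinding the definition of $\Tr_{\mathcal A}$ on morphisms as post-composition with $g$ under $\blank\circ_{\mathcal A}\blank$ then yields the displayed string diagram.

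The only real work is bookkeeping: correctly identifying the two adjoint functors and their actions on morphisms, namely $\mathscr L(f) = \id_a\otimes\mathcal F_A(f)$ and $\mathscr R(g) = \Tr_{\mathcal A}(g)$, and checking that the $v=1_{\mathcal V}$ reduction is compatible with the identity-adjunction interpretation. I expect no substantive obstacle beyond matching these conventions; in each case everything collapses to a single application of Lemma \ref{Composition Lemma} together with the definitions of $\eta^{\mathcal F_A}$ and $\epsilon^{\mathcal F_A}$.
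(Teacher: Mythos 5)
Your proof is correct and is exactly the argument the paper intends: the paper states this lemma without proof as ``an easy consequence of Lemma \ref{Composition Lemma},'' and your derivation—factoring through an identity, invoking the appropriate half of that lemma, and recognizing $\mate(\id_{\mathcal A(a\to b)}) = \epsilon_{a\to b}^{\mathcal F_A}$, $\mate(\id_{\mathcal F_A(v)}) = \eta_v^{\mathcal F_A}$, $\mathscr L(f) = \id_a\,\mathcal F_A(f)$, and $\mathscr R(g) = \Tr_{\mathcal A}(g)$—is precisely that consequence spelled out. The handling of the $v = 1_{\mathcal V}$ case via the equality adjunction also matches the paper's conventions.
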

\begin{lem}\label{mate of fgcirc}
  (Lemma 4.10 in \cite{MR3961709}) Suppose $f \in \mathcal V \E u \to \mathcal A \E a \to b \R \R$ and $g \in \mathcal V \E v \to \mathcal A \E b \to c \R \R$.  Then
  \begin{align*}
    \mate \E
    \begin{tikzpicture}[baseline=30, smallstring]
      \node (u) at (0,0) {$u$};
      \node (v) at (2,0) {$v$};
      \node (top) at (1,3) {$\mathcal A \E a \to c \R$};
      \node[draw, rectangle] (f) at (0,1) {$f$};
      \node[draw, rectangle] (g) at (2,1) {$g$};
      \node[draw, rectangle] (circ) at (1,2) {$\blank \circ_{\mathcal A} \blank$};
      \draw (u) to[in=-90,out=90] (f);
      \draw (v) to[in=-90,out=90] (g);
      \draw (f) to[in=-90,out=90] (circ.225);
      \draw (g) to[in=-90,out=90] (circ.-45);
      \draw (circ) to[in=-90,out=90] (top);
    \end{tikzpicture} \R
    =
    \begin{tikzpicture}[baseline=30, smallstring]
      \node (a) at (0,0) {$a$};
      \node (uv) at (2,0) {$\mathcal F_A \E u v \R$};
      \node (c) at (2,4) {$c$};
      \node[draw, rectangle] (mu) at (2,1) {$\mu_{u , v}$};
      \node[draw, rectangle] (f) at (1,2) {$\mate \E f \R$};
      \node[draw, rectangle] (g) at (2,3) {$\mate \E g \R$};
      \draw (uv) to[in=-90,out=90] (mu);
      \draw (a) to[in=-90,out=90] (f.225);
      \draw (mu.135) to[in=-90,out=90] (f.-45);
      \draw (mu.45) to[in=-90,out=90] (g.-45);
      \draw (f) to[in=-90,out=90] (g.225);
      \draw (g) to[in=-90,out=90] (c);
    \end{tikzpicture},
  \end{align*}
  where all mates are under the corresponding version of Adjunction \ref{Main Adjunction}.  If $u = 1_{\mathcal V}$ or $v = 1_{\mathcal V}$, then the oplaxitor $\mu$ is an identity.
\end{lem}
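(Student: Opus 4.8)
The plan is to reduce the statement to the already-recorded mate of $\blank \circ_{\mathcal A} \blank$ (given in the Remark preceding Lemma \ref{Composition Lemma}) and then to commute the oplaxitor past $\mathcal F_A$ using its naturality. First I would regard the morphism inside the mate on the left-hand side as the composite $\E f g \R \circ \E \blank \circ_{\mathcal A} \blank \R$ in $\mathcal V$, where $f g : uv \to \mathcal A \E a \to b \R \mathcal A \E b \to c \R$ and $\blank \circ_{\mathcal A} \blank$ is the composition morphism. Applying the second identity of Lemma \ref{Composition Lemma} to Adjunction \ref{Main Adjunction} (whose left adjoint is $v \mapsto a \mathcal F_A \E v \R$, so that its value on a morphism $h$ is $\id_a \mathcal F_A \E h \R$) gives
\[
  \mate \E \E f g \R \circ \E \blank \circ_{\mathcal A} \blank \R \R = \E \id_a \mathcal F_A \E f g \R \R \circ \mate \E \blank \circ_{\mathcal A} \blank \R.
\]
Substituting the diagram for $\mate \E \blank \circ_{\mathcal A} \blank \R$ from that Remark, the right-hand side becomes
\[
  \E \id_a \mathcal F_A \E f g \R \R \circ \E \id_a \mu_{\mathcal A \E a \to b \R , \mathcal A \E b \to c \R}^{\mathcal F_A} \R \circ \E \epsilon_{a \to b}^{\mathcal F_A} \, \id \R \circ \epsilon_{b \to c}^{\mathcal F_A}.
\]

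The key step is naturality of the oplaxitor $\mu^{\mathcal F_A}$. Since $f$ and $g$ are morphisms of $\mathcal V$, functoriality of the tensor product in the underlying monoidal category $A$ lets me pull $\id_a$ through, and naturality of $\mu^{\mathcal F_A}$ applied to $\E f , g \R$ rewrites the first two factors, yielding a leading factor of $\id_a \mu_{u,v}^{\mathcal F_A}$ followed by $\id_a \mathcal F_A \E f \R \mathcal F_A \E g \R$. This isolated outermost box $\id_a \mu_{u,v}^{\mathcal F_A}$ is exactly the top box in the target diagram on the right-hand side of the lemma, so it remains only to match the trailing composite.

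It then remains to identify $\E \id_a \mathcal F_A \E f \R \mathcal F_A \E g \R \R \circ \E \epsilon_{a \to b}^{\mathcal F_A} \id \R \circ \epsilon_{b \to c}^{\mathcal F_A}$ with $\E \mate \E f \R \id_{\mathcal F_A \E v \R} \R \circ \mate \E g \R$. Using the counit presentations $\mate \E f \R = \E \id_a \mathcal F_A \E f \R \R \circ \epsilon_{a \to b}^{\mathcal F_A}$ and $\mate \E g \R = \E \id_b \mathcal F_A \E g \R \R \circ \epsilon_{b \to c}^{\mathcal F_A}$ from Lemma \ref{unit-counit mates}, a short interchange-law computation in $A$ shows both expressions equal $\E \mate \E f \R \mathcal F_A \E g \R \R \circ \epsilon_{b \to c}^{\mathcal F_A}$, which completes the argument. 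The only real obstacle is bookkeeping: tracking which strands carry identities and invoking the interchange law in the right places; there is no conceptual difficulty once the mate of $\blank \circ_{\mathcal A} \blank$ and the naturality of $\mu^{\mathcal F_A}$ are in hand. Finally, in the degenerate cases $u = 1_{\mathcal V}$ or $v = 1_{\mathcal V}$, strong unitality of $\mathcal F_A^Z$ forces the relevant component of $\mu^{\mathcal F_A}$ to be an identity, so the same computation specializes to give the stated formula with the oplaxitor omitted.
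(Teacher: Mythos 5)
Your proof is correct and takes essentially the approach the paper intends: the paper itself defers this lemma to Lemma 4.10 of \cite{MR3961709}, but its own proof of the analogous tensor statement (Lemma \ref{mate of fgtens}) uses exactly your decomposition --- Lemma \ref{Composition Lemma} applied to $(f g) \circ (\blank \circ_{\mathcal A} \blank)$, the recorded mate of $\blank \circ_{\mathcal A} \blank$, naturality of the oplaxitor $\mu^{\mathcal F_A}$, and the counit presentations of $\mate(f)$ and $\mate(g)$ from Lemma \ref{unit-counit mates}. The only difference from the tensor case is that no half-braiding step is needed here, which your argument correctly omits.
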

We have an analogous result for $\blank \otimes \blank$ as well:
\begin{lem} \label{mate of fgtens}
  Suppose $f \in \mathcal V \E u \to \mathcal A \E a \to c \R \R$ and $g \in \mathcal V \E v \to \mathcal A \E b \to d \R \R$, then we have
  \begin{align*}
    \mate \E
    \begin{tikzpicture}[baseline=30, smallstring]
      \node (u) at (0,0) {$u$};
      \node (v) at (2,0) {$v$};
      \node (top) at (1,3) {$\mathcal A \E a b \to c d \R$};
      \node[draw, rectangle] (f) at (0,1) {$f$};
      \node[draw, rectangle] (g) at (2,1) {$g$};
      \node[draw, rectangle] (tens) at (1,2) {$\blank \otimes_{\mathcal A} \blank$};
      \draw (u) to[in=-90,out=90] (f);
      \draw (v) to[in=-90,out=90] (g);
      \draw (f) to[in=-90,out=90] (tens.225);
      \draw (g) to[in=-90,out=90] (tens.-45);
      \draw (tens) to[in=-90,out=90] (top);
    \end{tikzpicture} \R
    =
    \begin{tikzpicture}[baseline=30, smallstring]
      \node (a) at (0,0) {$a$};
      \node (b) at (1,0) {$b$};
      \node (uv) at (3,0) {$\mathcal F_A \E u v \R$};
      \node (c) at (1,4) {$c$};
      \node (d) at (3,4) {$d$};
      \node[draw, rectangle] (mu) at (3,1) {$\mu_{u , v}$};
      \node[draw, rectangle] (f) at (1,3) {$\mate \E f \R$};
      \node[draw, rectangle] (g) at (3,3) {$\mate \E g \R$};
      \draw (uv) to[in=-90,out=90] (mu);
      \draw (a) to[in=-90,out=90] (f.225);
      \draw (mu.135) to[in=-90,out=90] (f.-45);
      \draw (mu.45) to[in=-90,out=90] (g.-45);
      \draw[knot] (b) to[in=-90,out=90] (g.225);
      \draw (f) to[in=-90,out=90] (c);
      \draw (g) to[in=-90,out=90] (d);
    \end{tikzpicture},
  \end{align*}
  where the crossing is the half-braiding on $\mathcal F_A \E u \R$ from the lift of $\mathcal F_A$ to $Z \E A \R$.  In the case where $u = 1_{\mathcal V}$ or $v = 1_{\mathcal V}$, the oplaxitor $\mu$ is again an identity.  If $u = 1_{\mathcal V}$ or $b = 1_{\mathcal A}$, then the half-braiding is an identity.
\end{lem}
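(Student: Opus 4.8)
The plan is to mirror the proof of Lemma~\ref{mate of fgcirc}, reducing everything to the explicit formula for $\mate \E \blank \otimes_{\mathcal A} \blank \R$ recorded in the Remark above, together with the naturality of the oplaxitor $\mu$ and of the half-braiding. First I would factor the morphism inside the mate on the left-hand side as a composite on the $\mathcal V$-side of Adjunction~\ref{Main Adjunction}: the diagram is exactly $\E f g \R \circ \E \blank \otimes_{\mathcal A} \blank \R$, where $f g : u v \to \mathcal A \E a \to c \R \mathcal A \E b \to d \R$ and $\blank \otimes_{\mathcal A} \blank : \mathcal A \E a \to c \R \mathcal A \E b \to d \R \to \mathcal A \E a b \to c d \R$. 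Applying the second half of Lemma~\ref{Composition Lemma} to this composite, with left adjoint $w \mapsto \E a b \R \mathcal F_A \E w \R$, turns the mate into $\E \id_{a b} \mathcal F_A \E f g \R \R \circ \mate \E \blank \otimes_{\mathcal A} \blank \R$, whose second factor is precisely the diagram for $\mate \E \blank \otimes_{\mathcal A} \blank \R$ from the Remark, carrying its $\mu^{\mathcal F_A}$ box, two counits $\epsilon^{\mathcal F_A}$, and the half-braiding crossing.

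Next I would push the prepended $\mathcal F_A \E f g \R$ upward through $\mu^{\mathcal F_A}$. Since $\mu^{\mathcal F_A}$ is a natural transformation, $\mathcal F_A \E f g \R \circ \mu^{\mathcal F_A}_{\mathcal A \E a \to c \R , \mathcal A \E b \to d \R} = \mu_{u , v}^{\mathcal F_A} \circ \E \mathcal F_A \E f \R \mathcal F_A \E g \R \R$, which produces the $\mu_{u , v}$ box of the right-hand side and leaves $\mathcal F_A \E f \R$ on the left strand and $\mathcal F_A \E g \R$ on the right strand. On the right strand $\mathcal F_A \E g \R$ feeds directly into $\epsilon_{b \to d}^{\mathcal F_A}$, and by Lemma~\ref{unit-counit mates} the composite $\E \id_b \mathcal F_A \E g \R \R \circ \epsilon_{b \to d}^{\mathcal F_A}$ is exactly $\mate \E g \R$; the analogous identification on the left strand will produce $\mate \E f \R$ once the crossing has been moved out of the way.

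The crux, and the step I expect to be the main obstacle, is the bookkeeping of the half-braiding. In $\mate \E \blank \otimes_{\mathcal A} \blank \R$ the $b$-strand crosses the strand $\mathcal F_A \E \mathcal A \E a \to c \R \R$ via the half-braiding on that object, whereas the claimed right-hand side has $b$ crossing $\mathcal F_A \E u \R$ via the half-braiding on $\mathcal F_A \E u \R$. To reconcile these I would invoke naturality of the half-braiding in the central variable: because $f : u \to \mathcal A \E a \to c \R$ is a morphism in $\mathcal V$, its image $\mathcal F_A^Z \E f \R$ is a morphism in $Z \E A \R$, so $\mathcal F_A \E f \R$ intertwines the half-braidings of $\mathcal F_A \E u \R$ and $\mathcal F_A \E \mathcal A \E a \to c \R \R$. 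This lets me slide $\mathcal F_A \E f \R$ below the crossing, simultaneously converting the crossing into the half-braiding on $\mathcal F_A \E u \R$ and placing $\mathcal F_A \E f \R$ adjacent to $\epsilon_{a \to c}^{\mathcal F_A}$; a second application of Lemma~\ref{unit-counit mates} collapses this to $\mate \E f \R$, yielding exactly the right-hand side. Finally, the two degenerate assertions are immediate: strong unitality of $\mathcal F_A$ forces $\mu_{1_{\mathcal V} , v}$ and $\mu_{u , 1_{\mathcal V}}$ to be identities, and the half-braiding is trivial whenever its braiding object is the monoidal unit, that is when $\mathcal F_A \E u \R = 1_A$ (so $u = 1_{\mathcal V}$) or when $b = 1_{\mathcal A}$.
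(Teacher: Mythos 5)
Your proposal is correct and takes essentially the same route as the paper's proof: factor the left-hand side via Lemma \ref{Composition Lemma} and the explicit formula for $\mate \E \blank \otimes_{\mathcal A} \blank \R$ from the Remark, apply naturality of $\mu^{\mathcal F_A}$ to produce $\mu_{u,v}$ with $\mathcal F_A \E f \R$ and $\mathcal F_A \E g \R$ on the two strands, slide $\mathcal F_A \E f \R$ through the crossing using that it is a morphism in $Z \E A \R$ (which converts the half-braiding on $\mathcal F_A \E \mathcal A \E a \to c \R \R$ into the one on $\mathcal F_A \E u \R$), and collapse each strand with Lemma \ref{unit-counit mates}. The only difference is the immaterial order in which the two strands are simplified relative to the crossing move.
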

\begin{proof}
  From the composition lemma, the mate of $\blank \otimes_{\mathcal A} \blank$, and naturality of $\mu$, we have
  \begin{align*}
    \mate \E
    \begin{tikzpicture}[baseline=30, smallstring]
      \node (u) at (0,0) {$u$};
      \node (v) at (2,0) {$v$};
      \node (top) at (1,3) {$\mathcal A \E a b \to c d \R$};
      \node[draw, rectangle] (f) at (0,1) {$f$};
      \node[draw, rectangle] (g) at (2,1) {$g$};
      \node[draw, rectangle] (tens) at (1,2) {$\blank \otimes_{\mathcal A} \blank$};
      \draw (u) to[in=-90,out=90] (f);
      \draw (v) to[in=-90,out=90] (g);
      \draw (f) to[in=-90,out=90] (tens.225);
      \draw (g) to[in=-90,out=90] (tens.-45);
      \draw (tens) to[in=-90,out=90] (top);
    \end{tikzpicture} \R
    =
    \begin{tikzpicture}[baseline=30, smallstring]
      \node (a) at (0,0) {$a$};
      \node (b) at (1,0) {$b$};
      \node (uv) at (4,0) {$\mathcal F_A \E u v \R$};
      \node (c) at (1.5,5) {$c$};
      \node (d) at (3,5) {$d$};
      \node[draw, rectangle] (fg) at (4,1) {$\mathcal F_A \E f g \R$};
      \node[draw, rectangle] (mu) at (4,2) {$\mu_{\mathcal A \E a \to c \R , \mathcal A \E b \to d \R}^{\mathcal F_A}$};
      \node[draw, rectangle] (epac) at (1.5,4) {$\epsilon_{a \to c}^{\mathcal F_A}$};
      \node[draw, rectangle] (epbd) at (3,4) {$\epsilon_{b \to d}^{\mathcal F_A}$};
      \draw (uv) to[in=-90,out=90] (fg);
      \draw (fg) to[in=-90,out=90] (mu);
      \draw (a) to[in=-90,out=90] (epac.225);
      \draw (mu.135) to[in=-90,out=90] (epac.-45);
      \draw (mu.45) to[in=-90,out=90] (epbd.-45);
      \draw[knot] (b) to[in=-90,out=90] (epbd.225);
      \draw (epac) to[in=-90,out=90] (c);
      \draw (epbd) to[in=-90,out=90] (d);
    \end{tikzpicture}
    =
    \begin{tikzpicture}[baseline=30, smallstring]
      \node (a) at (0,0) {$a$};
      \node (b) at (1,0) {$b$};
      \node (uv) at (4,0) {$\mathcal F_A \E u v \R$};
      \node (c) at (1.5,5) {$c$};
      \node (d) at (3,5) {$d$};
      \node[draw, rectangle] (mu) at (4,1) {$\mu_{u , v}^{\mathcal F_A}$};
      \node[draw, rectangle] (f) at (3,2) {$\mathcal F_A \E f \R$};
      \node[draw, rectangle] (g) at (5,2) {$\mathcal F_A \E g \R$};
      \node[draw, rectangle] (epac) at (1.5,4) {$\epsilon_{a \to c}^{\mathcal F_A}$};
      \node[draw, rectangle] (epbd) at (3,4) {$\epsilon_{b \to d}^{\mathcal F_A}$};
      \draw (uv) to[in=-90,out=90] (mu);
      \draw (mu.135) to[in=-90,out=90] (f);
      \draw (mu.45) to[in=-90,out=90] (g);
      \draw (a) to[in=-90,out=90] (epac.225);
      \draw (f) to[in=-90,out=90] (epac.-45);
      \draw (g) to[in=-90,out=90] (epbd.-45);
      \draw[knot] (b) to[in=-90,out=90] (epbd.225);
      \draw (epac) to[in=-90,out=90] (c);
      \draw (epbd) to[in=-90,out=90] (d);
    \end{tikzpicture}.
  \end{align*}
  Next, $\mathcal F_A$ lifts to $Z \E A \R$ and so $\mathcal F_A \E f \R$ can go under the crossing, and using lemma \ref{unit-counit mates}, we have
  \begin{align*}
    \begin{tikzpicture}[baseline=45, smallstring]
      \node (a) at (0,0) {$a$};
      \node (b) at (1,0) {$b$};
      \node (uv) at (3,0) {$u v$};
      \node (c) at (1.5,5) {$c$};
      \node (d) at (3.5,5) {$d$};
      \node[draw, rectangle] (mu) at (3,1) {$\mu_{u , v}^{\mathcal F_A}$};
      \node[draw, rectangle] (f) at (2,3) {$\mathcal F_A \E f \R$};
      \node[draw, rectangle] (g) at (4,2) {$\mathcal F_A \E g \R$};
      \node[draw, rectangle] (epac) at (1.5,4) {$\epsilon_{a \to c}^{\mathcal F_A}$};
      \node[draw, rectangle] (epbd) at (3.5,4) {$\epsilon_{b \to d}^{\mathcal F_A}$};
      \draw (uv) to[in=-90,out=90] (mu);
      \draw (mu.45) to[in=-90,out=90] (g);
      \draw (mu.135) to[in=-90,out=90] (f);
      \draw (a) to[in=-90,out=90] (epac.225);
      \draw (f) to[in=-90,out=90] (epac.-45);
      \draw (g) to[in=-90,out=90] (epbd.-45);
      \draw[knot] (b) to[in=-90,out=90] (epbd.225);
      \draw (epac) to[in=-90,out=90] (c);
      \draw (epbd) to[in=-90,out=90] (d);
    \end{tikzpicture}
    =
    \begin{tikzpicture}[baseline=45, smallstring]
      \node (a) at (0,0) {$a$};
      \node (b) at (1,0) {$b$};
      \node (uv) at (3,0) {$\mathcal F_A \E u v \R$};
      \node (c) at (1,4) {$c$};
      \node (d) at (3,4) {$d$};
      \node[draw, rectangle] (mu) at (3,1) {$\mu_{u , v}$};
      \node[draw, rectangle] (f) at (1,3) {$\mate \E f \R$};
      \node[draw, rectangle] (g) at (3,3) {$\mate \E g \R$};
      \draw (uv) to[in=-90,out=90] (mu);
      \draw (a) to[in=-90,out=90] (f.225);
      \draw (mu.135) to[in=-90,out=90] (f.-45);
      \draw (mu.45) to[in=-90,out=90] (g.-45);
      \draw[knot] (b) to[in=-90,out=90] (g.225);
      \draw (f) to[in=-90,out=90] (c);
      \draw (g) to[in=-90,out=90] (d);
    \end{tikzpicture}.
  \end{align*}
\end{proof}
To end the section we notice one more straightforward lemma that will be useful:
\begin{lem}\label{Mates of Faf}
  Given $f \in \mathcal V \E u \to v \R$, we can consider $\mathcal F_A \E f \R$ as a morphism in $\mathcal V \E 1_{\mathcal V} \to \mathcal A \E \mathcal F_A \E u \R \to \mathcal F_A \E v \R \R \R = A \E \mathcal F_A \E u \R \to \mathcal F_A \E v \R \R$. Then using Lemmas \ref{mate of fgcirc} and \ref{Composition Lemma}, respectively, we note $\mathcal F_A \E f \R$ is the mate of each of the following expressions, so they are equal:
  \begin{align*}
    \begin{tikzpicture}[baseline=30, smallstring]
      \node (top) at (0.5,3) {$\mathcal A \E 1_{\mathcal A} \to \mathcal F_A \E v \R \R$};
      \node (bot) at (0,0) {$u$};
      \node[draw, rectangle] (eta) at (0,1) {$\eta_u$};
      \node[draw, rectangle] (f) at (1,1) {$\mathcal F_A \E f \R$};
      \node[draw, rectangle] (circ) at (0.5,2) {$\blank \circ \blank$};
      \draw (bot) to[in=-90,out=90] (eta);
      \draw (eta) to[in=-90,out=90] (circ.225);
      \draw (f) to[in=-90,out=90] (circ.-45);
      \draw (circ) to[in=-90,out=90] (top);
    \end{tikzpicture}
    =
    \begin{tikzpicture}[baseline=30, smallstring]
      \node (top) at (0,3) {$\mathcal A \E 1_{\mathcal A} \to \mathcal F_A \E v \R \R$};
      \node (bot) at (0,0) {$u$};
      \node[draw, rectangle] (eta) at (0,2) {$\eta_v$};
      \node[draw, rectangle] (f) at (0,1) {$f$};
      \draw (bot) to[in=-90,out=90] (f);
      \draw (f) to[in=-90,out=90] (eta);
      \draw (eta) to[in=-90,out=90] (top);
    \end{tikzpicture}.
  \end{align*}
\end{lem}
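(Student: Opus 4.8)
The plan is to use that $\mate$ is a bijection under Adjunction~\ref{Main Adjunction}, so that equality of the two displayed morphisms reduces to checking that both have the same mate, which will turn out to be $\mathcal F_A(f)$ itself. Both expressions are $u$-graded morphisms in $\mathcal V(u \to \mathcal A(1_{\mathcal A} \to \mathcal F_A(v)))$; instantiating Adjunction~\ref{Main Adjunction} at $a = 1_{\mathcal A}$, $b = \mathcal F_A(v)$ and grade $u$ identifies this hom-object with $A(\mathcal F_A(u) \to \mathcal F_A(v))$, the hom-set containing $\mathcal F_A(f)$. Since $\mate$ is injective, it then suffices to verify that $\mate$ sends each expression to $\mathcal F_A(f)$.

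For the left-hand expression I would apply Lemma~\ref{mate of fgcirc}. It already has the form $(f_1 g_1)\circ(\blank \circ_{\mathcal A} \blank)$, with $f_1 = \eta_u^{\mathcal F_A} \in \mathcal V(u \to \mathcal A(1_{\mathcal A} \to \mathcal F_A(u)))$ and $g_1 = \mathcal F_A(f) \in \mathcal V(1_{\mathcal V} \to \mathcal A(\mathcal F_A(u) \to \mathcal F_A(v)))$, the two being composable at $\mathcal F_A(u)$. Because the second grade is $1_{\mathcal V}$, the oplaxitor $\mu_{u,1_{\mathcal V}}$ in the conclusion is an identity, and Lemma~\ref{mate of fgcirc} expresses the mate as the vertical stack of $\mate(\eta_u^{\mathcal F_A})$ over $\mate(\mathcal F_A(f))$. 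Now $\eta_u^{\mathcal F_A} = \mate(\id_{\mathcal F_A(u)})$ by definition, so $\mate(\eta_u^{\mathcal F_A}) = \id_{\mathcal F_A(u)}$, while the $v = 1_{\mathcal V}$ clause of Lemma~\ref{unit-counit mates} gives $\mate(\mathcal F_A(f)) = \mathcal F_A(f)$; the stack thus collapses to $\mathcal F_A(f)$.

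For the right-hand expression, which is simply the $\mathcal V$-composite $f \circ \eta_v^{\mathcal F_A}$, I would invoke the second form of Lemma~\ref{Composition Lemma} with $g_1 = f \in \mathcal V(u \to v)$ and $g_2 = \eta_v^{\mathcal F_A}$. This yields $\mate(f \circ \eta_v^{\mathcal F_A}) = \mathscr L(f) \circ \mate(\eta_v^{\mathcal F_A})$, where $\mathscr L = 1_{\mathcal A}\,\mathcal F_A(\blank) = \mathcal F_A(\blank)$ is the left adjoint in this instance of Adjunction~\ref{Main Adjunction}. Here $\mathscr L(f) = \mathcal F_A(f)$ and $\mate(\eta_v^{\mathcal F_A}) = \id_{\mathcal F_A(v)}$, so the right-hand mate is again $\mathcal F_A(f)$.

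With both mates computed to be $\mathcal F_A(f)$, injectivity of the mate correspondence forces the two expressions to coincide. I expect no serious obstacle: the argument is purely a matter of recognizing each expression as an instance of a previously established mate lemma and then simplifying the units to identities. The one point to watch is the bookkeeping---tracking which instantiation of Adjunction~\ref{Main Adjunction} (i.e.\ which choice of $a$, $b$, and grade) is in force at each step, and confirming that the units $\eta = \mate(\id)$ mate back to identities---but this is routine rather than substantive. Conceptually, the identity is nothing more than the naturality of the unit $\eta^{\mathcal F_A}$ with respect to $f$.
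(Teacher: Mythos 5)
Your proposal is correct and is exactly the paper's argument: the paper likewise computes the mate of the left-hand expression via Lemma~\ref{mate of fgcirc} (with the second grade $1_{\mathcal V}$, so the oplaxitor is trivial) and the mate of the right-hand expression via Lemma~\ref{Composition Lemma}, finding both to be $\mathcal F_A\E f \R$ and concluding by bijectivity of the mate correspondence. Your filled-in details (units mating back to identities, the choice of instantiation of Adjunction~\ref{Main Adjunction}) are the routine bookkeeping the paper leaves implicit.
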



\section{The 2-functor} \label{Define 2-functor}
In this section, we construct a map $P : \vmoncat \to \vmodtens$ and prove that $P$ is a 2-functor. In the following section we show that $P$ is in fact a 2-equivalence.
\subsection{Map on 0-cells} \label{Map on 0-cells}
Our map on 0-cells is the bijective correspondence from \cite{MR3961709}: we set $P \E \mathcal A \R = \E A , \mathcal F_A^{\mathcal Z} \R$, with $A \coloneqq \mathcal A^{\mathcal V}$ the underlying category of $\mathcal A$ and $\mathcal F_A^Z$ the left adjoint of $x \mapsto \mathcal A \E 1_{\mathcal A} \to x \R$ lifted to the center via the half-braidings
\begin{equation}\label{Define half-braidings}
  e_{a , \mathcal F_A \E v \R} \coloneqq
  \mate \E
  \begin{tikzpicture}[baseline=30, smallstring]
    \node (top) at (1,3) {$\mathcal A \E 1_{\mathcal A} \to a \R$};
    \node (bot) at (0,0) {$v$};
    \node[draw, rectangle] (eta) at (0,1) {$\eta_v^{\mathcal F_A}$};
    \node[draw, rectangle] (j) at (2,1) {$j_a^{\mathcal A}$};
    \node[draw, rectangle] (tens) at (1,2) {$\blank \otimes \blank$};
    \draw (bot) to[in=-90,out=90] (eta);
    \draw (eta) to[in=-90,out=90] (tens.225);
    \draw (j) to[in=-90,out=90] (tens.-45);
    \draw (tens) to[in=-90,out=90] (top);
  \end{tikzpicture} \R.
\end{equation}
\begin{remark}
  To lift $\mathcal F_A$ to $\mathcal F_A^Z$, it is also necessary to show that the images of morphisms in $\mathcal V$ are morphisms in $Z \E A \R$.  This was omitted from \cite{MR3961709}, so we include the short proof here.
  \begin{proof}
    To prove the remark, we need to show that we can pull morphisms in the image of $\mathcal F_A$ under the half-braiding, i.e.,
    \begin{align*}
      \begin{tikzpicture}[baseline=30, smallstring]
        \node (a-top) at (2,3) {$a$};
        \node (a-bot) at (0,0) {$a$};
        \node (u) at (2,0) {$\mathcal F_A \E u \R$};
        \node (v) at (0,3) {$\mathcal F_A \E v \R$};
        \node (empty) at (0,1) {};
        \node[draw, rectangle] (f) at (2,1) {$\mathcal F_A \E f \R$};
        \draw (u) to (f);
        \draw (f) to[in=-90,out=90] (v);
        \draw (a) to (empty.90);
        \draw[knot] (empty) to[in=-90,out=90] (a-top);
      \end{tikzpicture}
      =
      \begin{tikzpicture}[baseline=30, smallstring]
        \node (a-top) at (2,3) {$a$};
        \node (a-bot) at (0,0) {$a$};
        \node (u) at (2,0) {$\mathcal F_A \E u \R$};
        \node (v) at (0,3) {$\mathcal F_A \E v \R$};
        \node (empty) at (2,2) {};
        \node[draw, rectangle] (f) at (0,2) {$\mathcal F_A \E f \R$};
        \draw (u) to[in=-90,out=90] (f);
        \draw (f) to[in=-90,out=90] (v);
        \draw[knot] (a-bot) to[in=-90,out=90] (empty);
        \draw (empty.-90) to (a-top);
      \end{tikzpicture}    
    \end{align*}
    for all $f \in \mathcal V \E u \to v \R$. We take the mate of the left hand side using Lemma \ref{Composition Lemma} and Lemma 4.6 from \cite{MR3961709}, then apply a braided interchange, Lemma (\ref{Mates of Faf}), and Corollary 4.7 from \cite{MR3961709} in turn:
    \begin{align*}
      \mate \E
      \begin{tikzpicture}[baseline=30, smallstring]
        \node (a-top) at (2,3) {$a$};
        \node (a-bot) at (0,0) {$a$};
        \node (u) at (2,0) {$\mathcal F_A \E u \R$};
        \node (v) at (0,3) {$\mathcal F_A \E v \R$};
        \node (empty) at (0,1) {};
        \node[draw, rectangle] (f) at (2,1) {$\mathcal F_A \E f \R$};
        \draw (u) to (f);
        \draw (f) to[in=-90,out=90] (v);
        \draw (a) to (empty.90);
        \draw[knot] (empty) to[in=-90,out=90] (a-top);
      \end{tikzpicture} \R
      =
      \begin{tikzpicture}[baseline=45, smallstring]
        \node (top) at (2.25,5) {$\mathcal A \E a \to \mathcal F \E v \R a \R$};
        \node (bot) at (1,0) {$u$};
        \node[draw, rectangle] (j1) at (0,1) {$j_a$};
        \node[draw, rectangle] (eta) at (1,1) {$\eta_u$};
        \node[draw, rectangle] (j2) at (2,1) {$j_a$};
        \node[draw, rectangle] (f) at (3,1) {$\mathcal F_A \E f \R$};
        \node[draw, rectangle] (tens1) at (0.5,2) {$\blank \otimes \blank$};
        \node[draw, rectangle] (tens2) at (2.5,2) {$\blank \otimes \blank$};
        \node[draw, rectangle] (circ1) at (1.5,3) {$\blank \circ \blank$};
        \node[draw, rectangle] (e) at (3,3) {$e_{a , \mathcal F_A \E v \R}$};
        \node[draw, rectangle] (circ2) at (2.25,4) {$\blank \circ \blank$};
        \draw (bot) to[in=-90,out=90] (eta);
        \draw (j1) to[in=-90,out=90] (tens1.225);
        \draw (eta) to[in=-90,out=90] (tens1.-45);
        \draw (j2) to[in=-90,out=90] (tens2.225);
        \draw (f) to[in=-90,out=90] (tens2.-45);
        \draw (tens1) to[in=-90,out=90] (circ1.225);
        \draw (tens2) to[in=-90,out=90] (circ1.-45);
        \draw (circ1) to[in=-90,out=90] (circ2.225);
        \draw (e) to[in=-90,out=90] (circ2.-45);
        \draw (circ2) to[in=-90,out=90] (top);
      \end{tikzpicture} 
      =
      \begin{tikzpicture}[baseline=45, smallstring]
        \node (top) at (1.5,5) {$\mathcal A \E a \to \mathcal F \E v \R a \R$};
        \node (bot) at (0.5,0) {$u$};
        \node[draw, rectangle] (eta) at (0.5,1) {$\eta_u$};
        \node[draw, rectangle] (f) at (1.5,1) {$\mathcal F_A \E f \R$};
        \node[draw, rectangle] (j) at (0,2) {$j_a$};
        \node[draw, rectangle] (circ1) at (1,2) {$\blank \circ \blank$};
        \node[draw, rectangle] (tens) at (0.5,3) {$\blank \otimes  \blank$};
        \node[draw, rectangle] (e) at (2.5,3) {$e_{a , \mathcal F_A \E v \R}$};
        \node[draw, rectangle] (circ2) at (1.5,4) {$\blank \circ \blank$};
        \draw (bot) to[in=-90,out=90] (eta);
        \draw (j) to[in=-90,out=90] (tens.225);
        \draw (eta) to[in=-90,out=90] (circ1.225);
        \draw (f) to[in=-90,out=90] (circ1.-45);
        \draw (circ1) to[in=-90,out=90] (tens.-45);
        \draw (j) to[in=-90,out=90] (tens.225);
        \draw (tens) to[in=-90,out=90] (circ2.225);
        \draw (e) to[in=-90,out=90] (circ2.-45);
        \draw (circ2) to[in=-90,out=90] (top);
      \end{tikzpicture} 
      =
      \begin{tikzpicture}[baseline=45, smallstring]
        \node (top) at (1.5,5) {$\mathcal A \E a \to \mathcal F \E v \R a \R$};
        \node (bot) at (1,0) {$u$};
        \node[draw, rectangle] (f) at (1,1) {$f$};
        \node[draw, rectangle] (eta) at (1,2) {$\eta_v$};
        \node[draw, rectangle] (j) at (0,2) {$j_a$};
        \node[draw, rectangle] (tens) at (0.5,3) {$\blank \otimes  \blank$};
        \node[draw, rectangle] (e) at (2.5,3) {$e_{a , \mathcal F_A \E v \R}$};
        \node[draw, rectangle] (circ) at (1.5,4) {$\blank \circ \blank$};
        \draw (bot) to[in=-90,out=90] (f);
        \draw (j) to[in=-90,out=90] (tens.225);
        \draw (f) to[in=-90,out=90] (eta);
        \draw (eta) to[in=-90,out=90] (tens.-45);
        \draw (j) to[in=-90,out=90] (tens.225);
        \draw (tens) to[in=-90,out=90] (circ.225);
        \draw (e) to[in=-90,out=90] (circ.-45);
        \draw (circ) to[in=-90,out=90] (top);
      \end{tikzpicture} 
      =
      \begin{tikzpicture}[baseline=25, smallstring]
        \node (top) at (0,3) {$\mathcal A \E a \to \mathcal F \E v \R a \R$};
        \node (bot) at (0,0) {$u$};
        \node[draw, rectangle] (f) at (0,1) {$f$};
        \node[draw, rectangle] (e) at (0,2) {$\mate \E e_{a , \mathcal F_A \E v \R} \R$};
        \draw (bot) to[in=-90,out=90] (f);
        \draw (f) to[in=-90,out=90] (e);
        \draw (e) to[in=-90,out=90] (top);
      \end{tikzpicture} 
    \end{align*}
    From here we use the definition of the half-braiding, apply Lemma (\ref{Mates of Faf}) again, use another braided interchange after adding an identity, and recognize the result as the mate of the right hand side, so that $\mathcal F_A \E f \R$ is in fact a morphism in $Z \E A \R$
    \begin{align*}
      \begin{tikzpicture}[baseline=40, smallstring]
        \node (top) at (0.5,4) {$\mathcal A \E a \to \mathcal F \E v \R a \R$};
        \node (bot) at (0,0) {$u$};
        \node[draw, rectangle] (f) at (0,1) {$f$};
        \node[draw, rectangle] (eta) at (0,2) {$\eta_v$};
        \node[draw, rectangle] (j) at (1,2) {$j_a$};
        \node[draw, rectangle] (tens) at (0.5,3) {$\blank \otimes  \blank$};
        \draw (bot) to[in=-90,out=90] (f);
        \draw (eta) to[in=-90,out=90] (tens.225);
        \draw (f) to[in=-90,out=90] (eta);
        \draw (j) to[in=-90,out=90] (tens.-45);
        \draw (tens) to[in=-90,out=90] (top);
      \end{tikzpicture}
      =
      \begin{tikzpicture}[baseline=40, smallstring]
        \node (top) at (1,4) {$\mathcal A \E a \to \mathcal F \E v \R a \R$};
        \node (bot) at (0,0) {$u$};
        \node[draw, rectangle] (eta) at (0,1) {$\eta_u$};
        \node[draw, rectangle] (f) at (1,1) {$\mathcal F_A \E f \R$};
        \node[draw, rectangle] (j) at (1.5,2) {$j_a$};
        \node[draw, rectangle] (circ1) at (0.5,2) {$\blank \circ \blank$};
        \node[draw, rectangle] (tens) at (1,3) {$\blank \otimes \blank$};
        \draw (bot) to[in=-90,out=90] (eta);
        \draw (j) to[in=-90,out=90] (tens.-45);
        \draw (eta) to[in=-90,out=90] (circ1.225);
        \draw (f) to[in=-90,out=90] (circ1.-45);
        \draw (circ1) to[in=-90,out=90] (tens.225);
        \draw (tens) to[in=-90,out=90] (top);
      \end{tikzpicture}
      =
      \begin{tikzpicture}[baseline=40, smallstring]
        \node (top) at (1.5,4) {$\mathcal A \E a \to \mathcal F \E v \R a \R$};
        \node (bot) at (0,0) {$u$};
        \node[draw, rectangle] (eta) at (0,1) {$\eta_u$};
        \node[draw, rectangle] (j1) at (1,1) {$j_a$};
        \node[draw, rectangle] (f) at (2,1) {$\mathcal F_A \E f \R$};
        \node[draw, rectangle] (j2) at (3,1) {$j_a$};
        \node[draw, rectangle] (tens1) at (0.5,2) {$\blank \otimes \blank$};
        \node[draw, rectangle] (tens2) at (2.5,2) {$\blank \otimes \blank$};
        \node[draw, rectangle] (circ) at (1.5,3) {$\blank \circ \blank$};
        \draw (bot) to[in=-90,out=90] (eta);
        \draw (j1) to[in=-90,out=90] (tens1.-45);
        \draw (eta) to[in=-90,out=90] (tens1.225);
        \draw (f) to[in=-90,out=90] (tens2.225);
        \draw (j2) to[in=-90,out=90] (tens2.-45);
        \draw (tens1) to[in=-90,out=90] (circ.225);
        \draw (tens2) to[in=-90,out=90] (circ.-45);
        \draw (circ) to[in=-90,out=90] (top);
      \end{tikzpicture}
      =
      \mate \E
      \begin{tikzpicture}[baseline=30, smallstring]
        \node (a-top) at (2,3) {$a$};
        \node (a-bot) at (0,0) {$a$};
        \node (u) at (2,0) {$\mathcal F_A \E u \R$};
        \node (v) at (0,3) {$\mathcal F_A \E v \R$};
        \node (empty) at (2,2) {};
        \node[draw, rectangle] (f) at (0,2) {$\mathcal F_A \E f \R$};
        \draw (u) to[in=-90,out=90] (f);
        \draw (f) to[in=-90,out=90] (v);
        \draw[knot] (a-bot) to[in=-90,out=90] (empty);
        \draw (empty.-90) to (a-top);
      \end{tikzpicture} \R
    \end{align*}
  \end{proof}
\end{remark}


\subsection{Map on 1-cells: Definition}
Given 0-cells $\mathcal A , \mathcal B \in \vmoncat$ and a 1-cell $\E \mathcal R , \rho^R \R \in \vmoncat \E \mathcal A \to \mathcal B \R$, we construct a 1-cell $\E R , \rho , r \R$ in $\vmodtens \E P \E \mathcal A \R \to P \E \mathcal B \R \R$. Define $R : A \to B$ to be the underlying functor of $\mathcal R$, and define the laxitor $\rho$ via, for $a , b \in A$, $\rho_{a , b} \coloneqq \mate \E \rho_{a , b}^R \R = \rho_{a , b}^R$ under the identity adjunction
\begin{align*}
  \mathcal B \E R \E a \R R \E b \R \to R \E a b \R \R = \mathcal V \E 1_{\mathcal V} \to B \E R \E a \R R \E b \R \to R \E a b \R \R \R.
\end{align*}
Lastly, we define the natural transformation $r$ via
\begin{equation}\label{Define r}
  r_v = \mate \E
  \begin{tikzpicture}[baseline=30, smallstring]
    \node (top) at (0,3) {$\mathcal B \E 1_{\mathcal B} \to R \E \mathcal F_A \E v \R \R \R$};
    \node (bot) at (0,0) {$v$};
    \node[draw, rectangle] (eta) at (0,1) {$\eta_v^{\mathcal F_A}$};
    \node[draw, rectangle] (R) at (0,2) {$\mathcal R_{1_{\mathcal A} \to \mathcal F_A \E v \R}$};
    \draw (bot) to (eta);
    \draw (eta) to (R);
    \draw (R) to (top);
  \end{tikzpicture} \R
\end{equation}
\begin{remark}
  Following immediately from the corresponding properties in $\vmoncat$, we have:
  \begin{itemize}
  \item The pair $\E R , \rho \R$ is a monoidal functor.
  \item The functor $R$ is strictly unital: $R \E 1_{\mathcal A} \R = 1_{\mathcal B}$.
  \item The laxitor $\rho$ is strictly unital: $\rho_{1_{\mathcal A} , a} = \id_{R \E a \R} = \rho_{a , 1_{\mathcal A}}$.
  \item The natural transformation $r$ is strictly unital: $r_{1_{\mathcal V}} = \id_{1_{\mathcal B}}$.
  \end{itemize}
\end{remark}
\begin{prop}
  The natural transformation $r$ is in fact natural, i.e., for each $f \in \mathcal V \E u \to v \R$, we have
  \begin{equation*}

  \end{align*}
  where the last equality is the definition of $\mu$, matching the mate of the left hand side.
\end{proof}
All together this proves:
\begin{prop}
  The triple $\E R , \rho , r \R$ is a 1-cell in $\vmodtens$.
\end{prop}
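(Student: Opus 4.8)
The plan is to observe that this Proposition is entirely a matter of assembling the separate conditions appearing in Definition \ref{Define vmodtens} and checking that each has already been verified in the preceding material. Recall that a 1-cell in $\vmodtens \E P \E \mathcal A \R \to P \E \mathcal B \R \R$ is a triple $\E R , \rho , r \R$ for which (i) $\E R , \rho \R$ is a strictly unital lax monoidal functor, (ii) $r : \mathcal F_B \Rightarrow \mathcal F_A \circ R$ is a strictly unital monoidal natural transformation, (iii) the half-braiding coherence holds, and (iv) the action coherence holds. So the work is to match these four requirements one-to-one against the Remark following Equation \ref{Define r} and the three Propositions established above.

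For (i), the Remark records that $\E R , \rho \R$ is a monoidal functor with $R \E 1_{\mathcal A} \R = 1_{\mathcal B}$ and $\rho_{1_{\mathcal A} , a} = \id_{R \E a \R} = \rho_{a , 1_{\mathcal A}}$, which are exactly the laxness and strict-unitality requirements; these transfer immediately from the corresponding facts for $\E \mathcal R , \rho^R \R$ in $\vmoncat$. For (ii), I would split the assertion that $r$ is a strictly unital monoidal natural transformation into its three constituent pieces: strict unitality $r_{1_{\mathcal V}} = \id_{1_{\mathcal B}}$ is again part of the Remark; naturality of the family $\E r_v \R_{v \in \mathcal V}$ is the content of the first Proposition above; and monoidality of $r$, i.e.\ its compatibility with the oplaxitors $\mu^{\mathcal F_B}$, $\mu^{\mathcal F_A}$ and the laxitor $\rho$, is precisely the action coherence diagram established in the third Proposition. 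Condition (iii) is the statement of the second Proposition, and condition (iv) is the third Proposition.

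Since every ingredient has already been proved, I expect no genuine obstacle to remain; the heavy computation was carried out in the three preceding Propositions. The only care required is bookkeeping: one must recognize that the ``monoidal'' half of the demand that $r$ be a monoidal natural transformation is not an additional computation but is literally the action coherence condition, so that it is neither double-counted nor inadvertently omitted. With that identification in place, all four conditions of Definition \ref{Define vmodtens} are accounted for, and the triple $\E R , \rho , r \R$ is therefore a 1-cell in $\vmodtens$.
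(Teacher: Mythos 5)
Your proposal is correct and takes essentially the same approach as the paper: the paper's entire proof is the phrase ``All together this proves,'' i.e.\ it assembles the preceding Remark (monoidality of $\E R , \rho \R$ and strict unitality of $R$, $\rho$, and $r$) with the three Propositions giving naturality of $r$, the half-braiding coherence, and the action coherence, exactly as you do. Your bookkeeping point that the ``monoidal'' requirement on $r$ is realized by the action-coherence condition (and so is neither an extra check nor double-counted) is consistent with how Definition \ref{Define vmodtens} packages the data.
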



\subsection{Map on 2-cells}
Suppose we have a $\mathcal V$-monoidal natural transformation $\theta : \mathcal R \Rightarrow \mathcal S$, i.e., a 2-cell in $\vmoncat$.  We define a 2-cell $P_{\mathcal A \to \mathcal B} \E \theta \R : R \Rightarrow S$ between the corresponding 1-cells in $\vmodtens$ via, for each $a \in \mathcal A$, $P_{\mathcal A \to \mathcal B} \E \theta \R_a \coloneqq \mate \E \theta_a \R$ under the identity adjunction
\begin{align*}
  \mathcal V \E 1_{\mathcal V} \to \mathcal B \E \mathcal R \E a \R \to \mathcal S \E a \R \R \R = \mathcal B \E \mathcal R \E a \R \to \mathcal S \E a \R \R.
\end{align*}
We write $\Theta_a \coloneqq P_{\mathcal A \to \mathcal B} \E \theta \R_a$ for brevity.
\begin{prop}
  As defined above, $\Theta$ is a 2-cell in $\vmodtens$
\end{prop}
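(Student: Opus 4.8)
The plan is to verify the three defining properties of a 2-cell in $\vmodtens$ (Definition \ref{Define vmodtens}): that $\Theta$ is an ordinary natural transformation $R \Rightarrow S$ of the underlying functors, that it is monoidal for the laxitors $\rho = \rho^R$ and $\sigma = \sigma^S$, and that it satisfies the action-coherence compatibility $r_v \circ \Theta_{\mathcal F_A(v)} = s_v$ for every $v \in \mathcal V$. The first two are direct translations of the axioms \eqref{vtrans nat} and \eqref{vtrans mon} defining a $1_{\mathcal V}$-graded monoidal natural transformation down to the underlying categories, so the genuine content lies in the action-coherence condition.

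For naturality I would feed an arbitrary $1_{\mathcal V}$-graded morphism $f \in A(a \to b)$ into the square \eqref{vtrans nat}. Because $\theta_a$ already occupies the $1_{\mathcal V}$ tensor slot, precomposing with $f$ converts $\mathcal S_{a \to b}$ and $\mathcal R_{a \to b}$ into the underlying-functor images $S(f)$ and $R(f)$ (Definition \ref{Underlying functor}), and the two legs of \eqref{vtrans nat} become precisely the two legs of the ordinary naturality square $R(f) \circ \Theta_b = \Theta_a \circ S(f)$ in $B$. Monoidality is even more direct: since $\rho$ and $\sigma$ are, by construction, the very morphisms $\rho^R$ and $\sigma^S$ regarded in $B$, the axiom \eqref{vtrans mon} reads verbatim as $\rho_{a,b} \circ \Theta_{ab} = (\Theta_a \otimes \Theta_b) \circ \sigma_{a,b}$. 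I would also record the unit normalization $\Theta_{1_{\mathcal A}} = \id_{1_{\mathcal B}}$, the unit axiom for a monoidal natural transformation between strictly unital functors, since it is needed below.

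The action-coherence identity is the crux, and I would establish it by taking mates under the $\mathcal B$-instance of Adjunction \eqref{Main Adjunction}, $B(\mathcal F_B(v) \to b) \cong \mathcal V(v \to \mathcal B(1_{\mathcal B} \to b))$. Applying Lemma \ref{Composition Lemma} to the composite $r_v \circ \Theta_{\mathcal F_A(v)}$ strips off $\Theta_{\mathcal F_A(v)}$ as a post-composition, yielding $\mate(r_v \circ \Theta_{\mathcal F_A(v)}) = \mate(r_v) \circ \mathcal B(1_{\mathcal B} \to \Theta_{\mathcal F_A(v)})$; and since $\mate(r_v) = \eta_v^{\mathcal F_A} \circ \mathcal R_{1_{\mathcal A} \to \mathcal F_A(v)}$ by the definition \eqref{Define r}, this is $\eta_v^{\mathcal F_A}$ followed by the map $x \mapsto \mathcal R_{1_{\mathcal A} \to \mathcal F_A(v)}(x) \circ_{\mathcal B} \theta_{\mathcal F_A(v)}$. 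This latter map is exactly the right leg of the naturality square \eqref{vtrans nat} at $a = 1_{\mathcal A}$, $b = \mathcal F_A(v)$, so I would rewrite it as the left leg $x \mapsto \theta_{1_{\mathcal A}} \circ_{\mathcal B} \mathcal S_{1_{\mathcal A} \to \mathcal F_A(v)}(x)$, and then collapse it to $\mathcal S_{1_{\mathcal A} \to \mathcal F_A(v)}$ using $\theta_{1_{\mathcal A}} = \id_{1_{\mathcal B}}$ together with unitality of $\circ_{\mathcal B}$. The result is $\mate(r_v \circ \Theta_{\mathcal F_A(v)}) = \eta_v^{\mathcal F_A} \circ \mathcal S_{1_{\mathcal A} \to \mathcal F_A(v)} = \mate(s_v)$, whence $r_v \circ \Theta_{\mathcal F_A(v)} = s_v$ because mates are a bijection. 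The only real obstacle I anticipate is the bookkeeping in this last step: one must match the post-composition produced by Lemma \ref{Composition Lemma} with the correct slot of \eqref{vtrans nat}, and it is precisely the strict unitality $\theta_{1_{\mathcal A}} = \id_{1_{\mathcal B}}$ that lets the naturality square collapse cleanly rather than leaving a stray factor of $\theta_{1_{\mathcal A}}$.
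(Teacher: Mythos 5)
Your proposal is correct and follows essentially the same route as the paper: naturality and monoidality are dispatched by taking mates of \eqref{vtrans nat} and \eqref{vtrans mon}, and the action-coherence identity $r_v \circ \Theta_{\mathcal F_A(v)} = s_v$ is obtained from the enriched naturality square at $a = 1_{\mathcal A}$, $b = \mathcal F_A(v)$, precomposed with $\eta_v^{\mathcal F_A}$, using strict unitality of $\theta$ and the mate lemmas. The only difference is cosmetic: you unwind $\mate(r_v \circ \Theta_{\mathcal F_A(v)})$ via Lemma \ref{Composition Lemma} back to the naturality square, whereas the paper runs the same computation forward from the square using Lemma \ref{mate of fgcirc}.
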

\begin{proof}
  Naturality and monoidality of $\Theta$ follow easily from the corresponding properties in the enriched setting: taking the appropriate mate of each side of the required conditions gives exactly the corresponding condition in $\vmoncat$. It remains to check that $s_v \circ \Theta_{\mathcal F_A \E v \R} = \rho_v$ for all $v \in \mathcal V$.\\
  From the hexagon axiom in $\vmoncat$, with $a = 1_{\mathcal A}$ and $b = \mathcal F_A \E v \R$, we have
  \begin{align*}
    \begin{tikzpicture}[baseline=30, smallstring]
      \node (bot) at (0,0) {$\mathcal A \E 1_{\mathcal A} \to \mathcal F_A \E v \R \R$};
      \node (top) at (1,3) {$\mathcal B \E 1_{\mathcal B} \to S \E \mathcal F_A \E v \R \R \R$};
      \node[draw, rectangle] (r) at (0,1) {$R_{1_{\mathcal A} \to \mathcal F_A \E v \R}$};
      \node[draw, rectangle] (theta) at (2,1) {$\theta_{\mathcal F_A \E v \R}$};
      \node[draw, rectangle] (circ) at (1,2) {$\blank \circ_{\mathcal B} \blank$};
      \draw (bot) to (r);
      \draw (r) to[in=-90,out=90] (circ.225);
      \draw (theta) to[in=-90,out=90] (circ.-45);
      \draw (circ) to (top);
    \end{tikzpicture}
    =
    \begin{tikzpicture}[baseline=30, smallstring]
      \node (bot) at (2,0) {$\mathcal A \E 1_{\mathcal A} \to \mathcal F_A \E v \R \R$};
      \node (top) at (1,3) {$\mathcal B \E 1_{\mathcal B} \to S \E \mathcal F_A \E v \R \R \R$};
      \node[draw, rectangle] (s) at (2,1) {$S_{1_{\mathcal A} \to \mathcal F_A \E v \R}$};
      \node[draw, rectangle] (theta) at (0,1) {$\theta_{1_{\mathcal A}}$};
      \node[draw, rectangle] (circ) at (1,2) {$\blank \circ_{\mathcal B} \blank$};
      \draw (bot) to (s);
      \draw (s) to[in=-90,out=90] (circ.-45);
      \draw (theta) to[in=-90,out=90] (circ.225);
      \draw (circ) to (top);
    \end{tikzpicture}
  \end{align*}
  Next, precompose with $\eta_v^{\mathcal F_A}$ on both sides, and use strict unitality of $\theta$:
  \begin{align*}
    \begin{tikzpicture}[baseline=50, smallstring]
      \node (bot) at (0,0) {$v$};
      \node (top) at (1,4) {$\mathcal B \E 1_{\mathcal B} \to S \E \mathcal F_A \E v \R \R \R$};
      \node[draw, rectangle] (eta) at (0,1) {$\eta_v^{\mathcal F_A}$};
      \node[draw, rectangle] (r) at (0,2) {$R_{1_{\mathcal A} \to \mathcal F_A \E v \R}$};
      \node[draw, rectangle] (theta) at (2,2) {$\theta_{\mathcal F_A \E v \R}$};
      \node[draw, rectangle] (circ) at (1,3) {$\blank \circ_{\mathcal B} \blank$};
      \draw (bot) to (eta);
      \draw (eta) to (r);
      \draw (r) to[in=-90,out=90] (circ.225);
      \draw (theta) to[in=-90,out=90] (circ.-45);
      \draw (circ) to (top);
    \end{tikzpicture}
    =
    \begin{tikzpicture}[baseline=35, smallstring]
      \node (bot) at (0,0) {$v$};
      \node (top) at (0,3) {$\mathcal B \E 1_{\mathcal B} \to S \E \mathcal F_A \E v \R \R \R$};
      \node[draw, rectangle] (eta) at (0,1) {$\eta_v^{\mathcal F_A}$};
      \node[draw, rectangle] (s) at (0,2) {$S_{1_{\mathcal A} \to \mathcal F_A \E v \R}$};
      \draw (bot) to (eta);
      \draw (eta) to (s);
      \draw (s) to (top);
    \end{tikzpicture}
  \end{align*}
  Now the right hand side is exactly the mate of $r_v$, and by Lemma \ref{mate of fgcirc}, the mate of the left hand side is
  \begin{align*}
    \begin{tikzpicture}[baseline=25, smallstring]
      \node (bot) at (0,0) {$\mathcal F_B \E v \R$};
      \node (top) at (0,3) {$S \E \mathcal F_A \E v \R \R$};
      \node[draw, rectangle] (r) at (0,1) {$\mate \E \eta_v^{\mathcal F_A} \circ R_{1_{\mathcal A} \to \mathcal F_A \E v \R} \R$};
      \node[draw, rectangle] (theta) at (0,2) {$\mate \E \theta_{\mathcal F_A \E v \R} \R$};
      \draw (bot) to (r);
      \draw (r) to (theta);
      \draw (theta) to (top);
    \end{tikzpicture}
    =
    \begin{tikzpicture}[baseline=25, smallstring]
      \node (bot) at (0,0) {$\mathcal F_B \E v \R$};
      \node (top) at (0,3) {$S \E \mathcal F_A \E v \R \R$};
      \node[draw, rectangle] (s) at (0,1) {$s_v$};
      \node[draw, rectangle] (theta) at (0,2) {$\Theta_{\mathcal F_A \E v \R}$};
      \draw (bot) to (s);
      \draw (s) to (theta);
      \draw (theta) to (top);
    \end{tikzpicture}
  \end{align*}
  which then proves that $s_v \circ \Theta_{\mathcal F_A \E v \R} = r_v$, as desired.
\end{proof}

\begin{prop}
  With the above definitions, $P: \vmoncat \to \vmodtens$ is a 2-functor.
\end{prop}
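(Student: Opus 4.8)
The plan is to verify the conditions of Definition \ref{2-functors} one at a time. That $P$ sends $0$-cells, $1$-cells, and $2$-cells of $\vmoncat$ to genuine $0$-cells, $1$-cells, and $2$-cells of $\vmodtens$ has already been established in Section \ref{Map on 0-cells} and the preceding propositions, so what remains is functoriality of each $P_{\mathcal A \to \mathcal B}$ together with preservation of unit $1$-cells, of composition of $1$-cells, and of horizontal composition of $2$-cells. I would first dispatch the functoriality of $P_{\mathcal A \to \mathcal B}$ on each hom-category. Since $\Theta_a = \mate(\theta_a)$ is taken under the \emph{identity} adjunction, the assignment on $2$-cells is, after the harmless identification $\mathcal A^{\mathcal V}(a \to b) = \mathcal V(1_{\mathcal V} \to \mathcal A(a \to b))$, literally the identity. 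Preservation of unit $2$-cells then amounts to the observation that the component $j_{\mathcal R(a)}^{\mathcal B}$ of the identity $1_{\mathcal V}$-graded natural transformation is, under this identification, $\id_{R(a)}$; and preservation of vertical composition is immediate because vertical composition of $1_{\mathcal V}$-graded natural transformations is defined through $\circ_{\mathcal B}$, which is exactly composition in the underlying category $B$.

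For unit $1$-cells I would check the three pieces of $P(\id_{\mathcal A})$ against the unit $1$-cell of $\vmodtens$. The underlying functor of $\id_{\mathcal A}$ is $\id_A$ by Definition \ref{Underlying functor}, and the laxitor is the identity. For the action-coherence, specializing Equation \ref{Define r} to $\mathcal R = \id_{\mathcal A}$ gives $r_v = \mate(\eta_v^{\mathcal F_A})$; since $\eta_v^{\mathcal F_A} = \mate(\id_{\mathcal F_A(v)})$ and the mate is a bijection, this collapses to $r_v = \id_{\mathcal F_A(v)}$, so $P(\id_{\mathcal A})$ is exactly the unit $1$-cell.

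The substantial step is preservation of composition of $1$-cells. Given $\mathcal R : \mathcal A \to \mathcal B$ and $\mathcal S : \mathcal B \to \mathcal C$, the equality $(\mathcal R \circ \mathcal S)^{\mathcal V} = R \circ S$ follows from unwinding the definitions of $\mathcal V$-functor composition and of the underlying functor, and the composite laxitor matches because $\rho = \rho^{\mathcal R}$ and the underlying functor $S$ acts on a $1_{\mathcal V}$-graded morphism exactly as $\mathcal S(-)$ does, so that the $\vmodtens$ formula $\sigma_{R(a),R(b)} \circ S(\rho_{a,b})$ of Definition \ref{Define vmodtens} coincides with the enriched composite laxitor of Definition \ref{Define vmoncat}. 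The real content is the action-coherence: I would expand $r_v^{\mathcal R \circ \mathcal S}$ using Equation \ref{Define r} and the identity $(\mathcal R \circ \mathcal S)_{1_{\mathcal A} \to \mathcal F_A(v)} = \mathcal R_{1_{\mathcal A} \to \mathcal F_A(v)} \circ \mathcal S_{1_{\mathcal B} \to R(\mathcal F_A(v))}$ (using strict unitality $R(1_{\mathcal A}) = 1_{\mathcal B}$), then compute the mate under the $\mathcal C$-version of Adjunction \ref{Main Adjunction} by means of Lemma \ref{Composition Lemma}. Matching the result against the $\vmodtens$ composite coherence $s_v \circ S(r_v)$ reduces, after applying Lemma \ref{Composition Lemma} to the latter as well, to recognizing the mate of $r_v$ inside $S(r_v)$ and using functoriality and naturality of the $\mathcal V$-functor $\mathcal S$ together with the defining expression for $s_v$. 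This is the main obstacle: it is the same flavor of mate bookkeeping as in the three propositions establishing that $(R,\rho,r)$ is a $1$-cell, and it must be carried out carefully to see that the two composites agree.

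Finally, for horizontal composition of $2$-cells I would take the identity-adjunction mate of the enriched Godement formula from Definition \ref{Define vmoncat} and compare it with $(\Theta \varphi)_a = \varphi_{R_1(a)} \circ S_2(\Theta_a)$ in $\vmodtens$. Since the enriched formula is built from $\circ_{\mathcal C}$ and the action of $\mathcal S_1$ (respectively $\mathcal S_2$) on a $1_{\mathcal V}$-graded morphism, this is again just the identification of underlying-category composition with $\circ_{\mathcal C}$, with the naturality square for $\varphi$ used to reconcile the two equivalent forms of the interchange. Assembling these four checks then yields that $P$ is a $2$-functor.
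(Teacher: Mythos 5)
Your overall decomposition --- preservation of unit $1$-cells, functoriality of each $P_{\mathcal A \to \mathcal B}$, preservation of horizontal composition of $2$-cells, and preservation of composition of $1$-cells --- is exactly the paper's, and the three easy checks are handled correctly: your reconciliation of the two Godement orderings via naturality of $\varphi$, and your derivation that the unit $1$-cell has $r_v = \mate(\eta_v^{\mathcal F_A}) = \id_{\mathcal F_A(v)}$ by bijectivity of the mate, are both right and slightly more explicit than the paper. The genuine gap is the fourth check, which you correctly identify as carrying all the content and then do not carry out. Preservation of $1$-cell composition comes down to the identity
\begin{equation*}
  s_v \circ S(r_v) \;=\; \mate\bigl(\eta_v^{\mathcal F_A}\circ\mathcal R_{1_{\mathcal A}\to\mathcal F_A(v)}\circ\mathcal S_{1_{\mathcal B}\to R(\mathcal F_A(v))}\bigr),
\end{equation*}
and your proposal stops at the assertion that matching the two sides ``reduces \ldots to recognizing the mate of $r_v$ inside $S(r_v)$'' and ``must be carried out carefully.'' That is a statement of the problem, not a proof: the two sides live as mates under different instances of Adjunction \ref{Main Adjunction}, $S(r_v)$ is the underlying functor applied to a mate, and no chain of equalities connecting them is exhibited. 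In the paper this single step occupies three auxiliary lemmas plus a five-term chain, using (i) the alternative expression $\mate(r_v)=\eta_v^{\mathcal F_B}\circ\Tr_{\mathcal B}(r_v)$ from Lemma \ref{unit-counit mates}, (ii) naturality of the action coherence $s$ applied to the underlying-category morphism $\Tr_{\mathcal B}(r_v)$, (iii) the counit identity $\mathcal F_B(\Tr_{\mathcal B}(r_v))\circ\epsilon^{\mathcal F_B}_{1_{\mathcal B}\to R(\mathcal F_A(v))}=\epsilon^{\mathcal F_B}_{1_{\mathcal B}\to\mathcal F_B(v)}\circ r_v$, and (iv) the expansion $r_v=\mathcal F_B(\eta_v^{\mathcal F_A})\circ r_{\mathcal A(1_{\mathcal A}\to\mathcal F_A(v))}\circ R(\epsilon^{\mathcal F_A}_{1_{\mathcal A}\to\mathcal F_A(v)})$.

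A related, smaller point: at the critical juncture you invoke ``functoriality and naturality of the $\mathcal V$-functor $\mathcal S$.'' A $\mathcal V$-functor has no naturality; what is needed is either naturality of the action coherence $s:\mathcal F_C\Rightarrow\mathcal F_B\circ S$ (the paper's route, item (ii) above), or else functoriality of $\mathcal S$ in the precise form
\begin{equation*}
  \Tr_{\mathcal B}(r_v)\circ\mathcal S_{1_{\mathcal B}\to R(\mathcal F_A(v))}\;=\;\mathcal S_{1_{\mathcal B}\to\mathcal F_B(v)}\circ\Tr_{\mathcal C}\bigl(S(r_v)\bigr),
\end{equation*}
which, combined with item (i) and both directions of Lemma \ref{Composition Lemma}, closes the argument in a few lines and is in fact shorter than the paper's route. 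So your ingredient list is nearly correct, but until one of these chains is actually written down, the proposition is not proved.
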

We will take the remainder of this section to prove the proposition. We need to show that
\begin{itemize}
\item Each $P_{\mathcal A \to \mathcal A}$ preserves unit 1-cells, 
\item Each $P_{\mathcal A \to \mathcal B}$ is a functor, i.e., vertical composition of 2-cells is preserved
\item Horizontal composition of 2-cells is preserved, and 
\item Composition of 1-cells is preserved.
\end{itemize}
First, note that both $\id_{P \E \mathcal A \R}$ and $P_{\mathcal A \to \mathcal A} \E \id_{\mathcal A} \R$ are the identity 1-cell on $P \E \mathcal A \R$, so unit 1-cells are preserved. Next, since $P_{\mathcal A \to \mathcal B}$ on 2-cells is simply taking mates under the identity adjunction, it follows easily from the definitions that horizontal and vertical composition of 2-cells are both preserved. The bulk of the work is involved in showing that composition of 1-cells is preserved, i.e.,
\begin{align*}
  P_{\mathcal A \to \mathcal B} \E ( \mathcal R , \rho^R ) \R \circ P_{\mathcal B \to \mathcal C} \E ( \mathcal S , \sigma^S ) \R = P_{\mathcal A \to \mathcal C} \E ( \mathcal R , \rho^R ) \circ ( \mathcal S , \sigma^S ) \R.
\end{align*}
The left hand side is given by
\begin{align*}
  P_{\mathcal A \to \mathcal B} \E ( \mathcal R , \rho^R ) \R \circ P_{\mathcal B \to \mathcal C} \E ( \mathcal S , \sigma^S ) \R%
  &= ( \mathcal R^{\mathcal V} , \rho , r ) \circ ( \mathcal S^{\mathcal V} , \sigma , s ),
\end{align*}
where $\rho$, $\sigma$, $r$, and $s$ are as defined in \ref{Define r}, and the right hand side is
\begin{align*}
  P_{\mathcal A \to \mathcal C} \E ( \mathcal R , \rho^R ) \circ ( \mathcal S , \sigma^S ) \R%
  = P_{\mathcal A \to \mathcal C} \E \mathcal R \circ \mathcal S , \sigma^S \circ S ( \rho^R ) \R
  = \E \E \mathcal R \circ \mathcal S \R^{\mathcal V} , \mate \E \sigma^S \circ S ( \rho^R ) \R , t \R,
\end{align*}
where $\sigma^S \circ S ( \rho^R )$ is as defined in \ref{Define vmodtens} and $t \coloneqq \mate \E \eta_v^{\mathcal F_A} \circ ( \mathcal R_{1_{\mathcal A} \to \mathcal F_A \E v \R} \circ \mathcal S_{1_{\mathcal B} \to \mathcal R \E \mathcal F_A \E f \R \R} ) \R$. Now, $\E \mathcal R \circ \mathcal S \R^{\mathcal V} = \mathcal R^{\mathcal V} \circ \mathcal S^{\mathcal V}$, so these are equal as functors.  Further, since $\mate \E \sigma_{c , d} \R = \sigma_{c , d}^S$, we directly have
\begin{align*}
  \mate \E 

\end{align*}
Therefore composition of 1-cells is preserved, and $P$ is a 2-functor.


\section{Equivalence}

\begin{prop}
  The 2-functor $P : \vmoncat \to \vmodtens$ as defined above is essentially surjective on 0-cells.
\end{prop}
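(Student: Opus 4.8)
The plan is to invoke the reverse direction of the bijective correspondence in Theorem \ref{Main MP Theorem}, which is the inverse of what $P$ does on $0$-cells. Concretely, given a $0$-cell $(B,\mathcal F_B^Z)$ of $\vmodtens$, I would build a $\mathcal V$-monoidal category $\mathcal B$ following \cite{MR3961709}: take the objects of $\mathcal B$ to be those of $B$, and define the hom objects $\mathcal B\E a \to b \R$ by the Yoneda lemma applied to the natural family of isomorphisms
\[
\mathcal V\E v \to \mathcal B\E a \to b \R \R \cong B\E a\,\mathcal F_B\E v \R \to b \R,
\]
which exist because $B$ is rigid and $\mathcal F_B = \mathcal F_B^Z \circ \Forget_Z$ admits a right adjoint (the weakly tensored hypothesis). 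The composition, identities, tensor product, and the braided interchange axiom on $\mathcal B$ are then induced as in \cite{MR3961709}, so that $\mathcal B$ is a rigid $\mathcal V$-monoidal category.

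Next I would check that $\mathcal B$ is a genuine $0$-cell of $\vmoncat$, i.e.\ that $x \mapsto \mathcal B\E 1_{\mathcal B} \to x \R$ admits a left adjoint. Setting $a = 1_{\mathcal B}$ in the defining adjunction gives $\mathcal V\E v \to \mathcal B\E 1_{\mathcal B} \to x \R \R \cong B\E \mathcal F_B\E v \R \to x \R$, which exhibits $\mathcal F_B$ as the desired left adjoint. In particular $\mathcal F_{\mathcal B}$, the canonical left adjoint used in the definition of $P$ on $0$-cells, agrees with $\mathcal F_B$.

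It then remains to produce an equivalence $P\E \mathcal B \R \simeq (B, \mathcal F_B^Z)$ in $\vmodtens$. For the underlying categories, I would specialize the defining adjunction to $v = 1_{\mathcal V}$ and use that $\mathcal F_B$ is strongly unital, so that $\mathcal F_B\E 1_{\mathcal V} \R \cong 1_B$ and hence
\[
\mathcal B^{\mathcal V}\E a \to b \R = \mathcal V\E 1_{\mathcal V} \to \mathcal B\E a \to b \R \R \cong B\E \mathcal F_B\E 1_{\mathcal V}\R \, a \to b \R \cong B\E a \to b \R
\]
naturally in $a,b$; this assembles into a monoidal equivalence $B \simeq \mathcal B^{\mathcal V}$ that is the identity on objects. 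Since $\mathcal F_{\mathcal B} = \mathcal F_B$, and the half-braidings of Equation \ref{Define half-braidings} recover those already packaged in $\mathcal F_B^Z$ (by the analysis in \cite{MR3961709}), this equivalence intertwines the two oplax braided functors into the Drinfeld centers and upgrades to an equivalence $1$-cell in $\vmodtens$ whose action-coherence transformation is the canonical identification.

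The main obstacle is precisely this last step: passing from the \emph{bijection} of $0$-cells in \cite{MR3961709} to an \emph{equivalence in the $2$-category} $\vmodtens$. That is, one must verify that the identity-on-objects monoidal isomorphism $B \cong \mathcal B^{\mathcal V}$ carries the extra structure of a $1$-cell — a strictly unital action-coherence natural transformation satisfying the half-braiding and action-coherence conditions of Definition \ref{Define vmodtens} — and that this $1$-cell is invertible up to invertible $2$-cells. The compatibility of the identification with the half-braidings, traced through the explicit formula in Equation \ref{Define half-braidings} together with the strong unitality of $\mathcal F_B$, is the technical heart of the argument; everything else is a routine transport of structure along the monoidal equivalence.
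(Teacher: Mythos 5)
Your proposal is correct and follows essentially the same route as the paper: the paper's own proof is simply the observation that essential surjectivity on 0-cells is exactly the content of the main theorem of \cite{MR3961709} (Theorem \ref{Main MP Theorem}), whose construction and equivalence you have sketched. The steps you flag as the technical heart (building $\mathcal B$ via the Yoneda adjunction, identifying $\mathcal F_{\mathcal B}$ with $\mathcal F_B$, and upgrading the identity-on-objects identification $B \cong \mathcal B^{\mathcal V}$ to an equivalence in $\vmodtens$ compatible with the half-braidings) are precisely what is carried out in \cite[sections 6 and 7]{MR3961709}, so deferring to that reference is exactly what the paper does as well.
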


\begin{proof}
  To say that $P$ is essentially surjective is just to say that for every 0-cell $( A , \mathcal F_A^Z ) \in \vmodtens$, there exists  a 0-cell $\mathcal A \in \vmoncat$ such that $P \E \mathcal A \R \simeq ( A , \mathcal F_A^Z )$.  This is exactly the main theorem in \cite{MR3961709}.
\end{proof}

\begin{defn}
  Let $( A , \mathcal F_A^Z ) , ( B , \mathcal F_B^Z ) \in \vmodtens$ be 0-cells, and let $\E R , \rho , r \R : ( A , \mathcal F_A^Z ) \to ( B , \mathcal F_B^Z )$ be a 1-cell between them.  We construct a 1-cell $\E \mathcal R , \rho^R \R$ in $\vmoncat$ such that $P_{\mathcal A \to \mathcal B} \E \E \mathcal R , \rho^R \R \R = \E R , \rho , r \R$. Since $\mathcal A$ and $\mathcal B$ share the same objects as $A$ and $B$, respectively, we can define $\mathcal R \E a \R = R \E a \R$ for all $a \in \mathcal A$.  Next, for $a , b \in \mathcal A$, we make $\mathcal R$ into a $\mathcal V$-functor via
  \begin{equation}\label{V-Functor Definition}
    \mathcal R_{a \to b} \coloneqq \mate \E
    \begin{tikzpicture}[baseline=35, smallstring]
      \node (top) at (1.5,4) {$R \E b \R$};
      \node (a) at (0,0) {$R \E a \R$};
      \node (bot) at (3,0) {$\DF a \to b \FD_{\mathcal F_B}^{\mathcal A}$};
      \node[draw, rectangle] (r) at (3,0.75) {$r_{\mathcal A \E a \to b \R}$};
      \node[draw, rectangle] (rho) at (1.5,2) {$\rho_{a , \DF a \to b \FD_{\mathcal F_A}^{\mathcal \mathcal A}}$};
      \node[draw, rectangle] (rep) at (1.5,3) {$R \E \epsilon_{a \to b}^{\mathcal F_A} \R$};
      \draw (a) to[in=-90,out=90] (rho.225);
      \draw (bot) to[in=-90,out=90] (r);
      \draw (r) to[in=-90,out=90] (rho.-45);
      \draw (rho) to[in=-90,out=90] (rep);
      \draw (rep) to (top);
    \end{tikzpicture} \R
  \end{equation}
  under Adjunction \ref{Main Adjunction}, and tensorator $\rho^R$ given by
  \begin{align*}
    \rho_{a , b}^R \coloneqq \mate \E \sigma_{a , b} \R = \rho_{a , b}
  \end{align*}
  under the equality adjunction
  \begin{align*}
    B \E R \E a \R R \E b \R \to R \E a b \R \R = \mathcal V \E 1_{\mathcal V} \to \mathcal B \E R \E a \R R \E b \R \to R \E a b \R \R \R.
  \end{align*}

\end{defn}

\begin{lem}
  The map $\mathcal R$ defined above is functorial: For all $a , b , c \in \mathcal A$,
  \begin{align*}
    \begin{tikzpicture}[baseline=25, smallstring]
      \node (top) at (1,3) {$\mathcal B \E R \E a \R \to R \E c \R \R$};
      \node (ab) at (0,0) {$\mathcal A \E a \to b \R$};
      \node (bc) at (2,0) {$\mathcal A \E b \to c \R$};
      \node[draw, rectangle] (rab) at (0,1) {$\mathcal R_{a \to b}$};
      \node[draw, rectangle] (rbc) at (2,1) {$\mathcal R_{b \to c}$};
      \node[draw, rectangle] (circ) at (1,2) {$\blank \circ_{\mathcal B} \blank$};
      \draw (ab) to[in=-90,out=90] (rab);
      \draw (bc) to[in=-90,out=90] (rbc);
      \draw (rab) to[in=-90,out=90] (circ.225);
      \draw (rbc) to[in=-90,out=90] (circ.-45);
      \draw (circ) to[in=-90,out=90] (top);
    \end{tikzpicture}
    =
    \begin{tikzpicture}[baseline=25, smallstring]
      \node (top) at (1,3) {$\mathcal B \E R \E a \R \to R \E c \R \R$};
      \node (ab) at (0,0) {$\mathcal A \E a \to b \R$};
      \node (bc) at (2,0) {$\mathcal A \E b \to c \R$};
      \node[draw, rectangle] (circ) at (1,1) {$\blank \circ_{\mathcal A} \blank$};
      \node[draw, rectangle] (r) at (1,2) {$\mathcal R_{a \to c}$};
      \draw (ab) to[in=-90,out=90] (circ.225);
      \draw (bc) to[in=-90,out=90] (circ.-45);
      \draw (circ) to[in=-90,out=90] (r);
      \draw (r) to[in=-90,out=90] (top);
    \end{tikzpicture}
  \end{align*}
\end{lem}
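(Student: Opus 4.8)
The plan is to use that $\mate$ is a bijection, so the identity to be proven is equivalent to the equality of the mates of the two sides under the version of Adjunction \ref{Main Adjunction} for $\mathcal B$ with $v=\mathcal A\E a\to b\R\,\mathcal A\E b\to c\R$, namely
\[
B\E R\E a\R\,\mathcal F_B\E\mathcal A\E a\to b\R\,\mathcal A\E b\to c\R\R\to R\E c\R\R\cong\mathcal V\E\mathcal A\E a\to b\R\,\mathcal A\E b\to c\R\to\mathcal B\E R\E a\R\to R\E c\R\R\R .
\]
I would first compute the mate of the left-hand side. It has exactly the form $\E\mathcal R_{a\to b}\otimes\mathcal R_{b\to c}\R\circ\E\blank\circ_{\mathcal B}\blank\R$, so Lemma \ref{mate of fgcirc}, applied to the $\mathcal V$-monoidal category $\mathcal B$ with $f=\mathcal R_{a\to b}$ and $g=\mathcal R_{b\to c}$, rewrites this mate as a diagram assembled from the oplaxitor $\mu^{\mathcal F_B}$, a single half-braiding crossing on $\mathcal F_B\E\mathcal A\E a\to b\R\R$, and the defining expressions $\mate\E\mathcal R_{a\to b}\R$ and $\mate\E\mathcal R_{b\to c}\R$ from \ref{V-Functor Definition}.

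Next I would compute the mate of the right-hand side. By Lemma \ref{Composition Lemma} it equals $\E\id_{R\E a\R}\otimes\mathcal F_B\E\blank\circ_{\mathcal A}\blank\R\R\circ\mate\E\mathcal R_{a\to c}\R$, and I would expand $\mate\E\mathcal R_{a\to c}\R$ via \ref{V-Functor Definition}. Naturality of $r$ (the preceding proposition) slides $\mathcal F_B\E\blank\circ_{\mathcal A}\blank\R$ past $r_{\mathcal A\E a\to c\R}$, turning it into $r_{\mathcal A\E a\to b\R\,\mathcal A\E b\to c\R}$ followed by $R\E\mathcal F_A\E\blank\circ_{\mathcal A}\blank\R\R$; naturality of $\rho$ then moves this $R$-image past the laxitor, and functoriality of $R$ absorbs it into the counit. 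To simplify that counit term I would use that $\mathcal F_A\E\blank\circ_{\mathcal A}\blank\R\circ\epsilon_{a\to c}^{\mathcal F_A}=\mate\E\blank\circ_{\mathcal A}\blank\R$ by Lemma \ref{unit-counit mates}, which the Remark identifies with the two-counit expression built from $\mu^{\mathcal F_A}$, $\epsilon_{a\to b}^{\mathcal F_A}$ and $\epsilon_{b\to c}^{\mathcal F_A}$. This puts the right-hand mate into the normal form $\E\id_{R\E a\R}\otimes r\R\circ\rho\circ R(\text{two-counit diagram})$.

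The heart of the argument is to massage the left-hand mate into this same normal form, and the two action-coherence conditions of Definition \ref{Define vmodtens} are exactly what is needed. The half-braiding coherence condition converts the $\mathcal F_B$-half-braiding crossing produced by Lemma \ref{mate of fgcirc} into the $R$-image of the corresponding $\mathcal F_A$-half-braiding, freeing the two copies of $r$ and $\rho$ to be rearranged; the $\mu$-coherence condition then merges $\mu^{\mathcal F_B}$ with $r_{\mathcal A\E a\to b\R}\otimes r_{\mathcal A\E b\to c\R}$ into a single $r$ at $\mathcal A\E a\to b\R\,\mathcal A\E b\to c\R$ pushed through $R\E\mu^{\mathcal F_A}\R$. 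Finally naturality and associativity of $\rho$ collapse the two laxitors into one, and the counit identity above matches the remaining $R\E\epsilon\R$ factors with $R\E\epsilon_{a\to c}^{\mathcal F_A}\R$, completing the match.

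I expect the crossing to be the main obstacle. The half-braiding appears precisely because $\mathcal F_B$ is \emph{oplax} while $\rho$ is \emph{lax}, and the delicate point is to route the strands $R\E a\R$, $\mathcal F_B\E\mathcal A\E a\to b\R\R$ and $\mathcal F_B\E\mathcal A\E b\to c\R\R$ so that the half-braiding coherence condition applies on the nose, with the correct objects in each slot. Once the crossing has been discharged, merging the $r$'s via $\mu$-coherence and the laxitors via associativity is routine bookkeeping.
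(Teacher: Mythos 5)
Your overall skeleton (mate both sides under Adjunction \ref{Main Adjunction}, expand Definition \ref{V-Functor Definition}, merge the two copies of $r$ via the $\mu$ action-coherence condition, and clean up with naturality/associativity of $\rho$, naturality of $r$, and Lemmas \ref{Composition Lemma} and \ref{unit-counit mates}) is essentially the paper's proof. But there is a concrete error at the step you call the heart of the argument: Lemma \ref{mate of fgcirc} does \emph{not} produce a half-braiding crossing. The crossing appears only in Lemma \ref{mate of fgtens}, i.e., when the two graded morphisms are combined with $\blank \otimes_{\mathcal A} \blank$; it is forced there because the strands $a , b , \mathcal F_A \E u \R , \mathcal F_A \E v \R$ must be reordered so that $\mathcal F_A \E u \R$ sits next to $a$. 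In the composition case $\blank \circ_{\mathcal B} \blank$ the strands are already in sequential order, and the mate is the crossing-free stacked diagram: $\mu^{\mathcal F_B}$ followed by $\mate \E \mathcal R_{a \to b} \R$ and then $\mate \E \mathcal R_{b \to c} \R$. Consequently the half-braiding coherence condition of Definition \ref{Define vmodtens} is never invoked in this lemma --- the only coherence needed is the $\mu$ one --- and the step you describe as converting the $\mathcal F_B$-half-braiding crossing into the $R$-image of an $\mathcal F_A$-half-braiding has nothing to act on. The "main obstacle" you anticipate does not exist.

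The saving grace is that your error makes the problem look harder than it is: delete the crossing and all mention of the half-braiding coherence, and the remaining outline goes through and coincides with the paper's argument, which runs as follows. Take the mate of the left-hand side via Lemma \ref{mate of fgcirc}; expand \ref{V-Functor Definition} for both $\mathcal R_{a \to b}$ and $\mathcal R_{b \to c}$; use naturality and then associativity of $\rho$ to bring the two laxitors together; apply the $\mu$ action-coherence to replace $\mu^{\mathcal F_B} \circ \E r \, r \R \circ \rho$ by $r \circ R \E \mu^{\mathcal F_A} \R$; recognize $R$ applied to the resulting two-counit expression as $R \E \mate \E \blank \circ_{\mathcal A} \blank \R \R$ and rewrite it via Lemma \ref{unit-counit mates}; finally slide $\mathcal F_A \E \blank \circ_{\mathcal A} \blank \R$ down through $\rho$ and $r$ by naturality to land on the mate of the right-hand side, exactly as in your own computation of that mate. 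For your bookkeeping: the place where the half-braiding coherence genuinely earns its keep is the later lemma on naturality of the laxitor $\rho^R$, where $\blank \otimes_{\mathcal A} \blank$ and Lemma \ref{mate of fgtens} do create crossings.
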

\begin{proof}
  Using lemma \ref{mate of fgcirc}, expanding definitions, and using naturality of $\rho$, the left hand side has mate
  \begin{align*}
    \begin{tikzpicture}[baseline=45, smallstring]
      \node (r-a-bottom) at (0,0) {$R \E a \R$};
      \node (gmm) at (2,0) {$\DF a \to b ; b \to c \FD_{\mathcal F_B}^{\mathcal A}$};
      \node (r-c-top) at (1,5) {$R \E c \R$};
      \node[draw, rectangle] (mu) at (2,1) {$\mu_{\mathcal A \E a \to b \R , \mathcal A \E b \to c \R}^{\mathcal F_B}$};
      \node[draw, rectangle] (hab) at (0,3) {$\mate \E \mathcal R_{a \to b} \R$};
      \node[draw, rectangle] (hbc) at (1,4) {$\mate \E \mathcal R_{b \to c} \R$};
      \draw (r-a-bottom) to[in=-90,out=90] (hab);
      \draw (gmm) to[in=-90,out=90] (mu);
      \draw (mu.135) to[in=-90,out=90] (hab.-45);
      \draw (mu.45) to[in=-90,out=90] (hbc.-45);
      \draw (hab) to[in=-90,out=90] (hbc.235);
      \draw (hbc) -- (r-c-top);
    \end{tikzpicture}
    =
    \begin{tikzpicture}[baseline=70, smallstring]
      \node (r-a-bottom) at (0,0) {$R \E a \R$};
      \node (gmm) at (2,0) {$\DF a \to b ; b \to c \FD_{\mathcal F_B}^{\mathcal A}$};
      \node (r-c-top) at (1,7) {$R \E c \R$};
      \node[draw, rectangle] (mu) at (2,1) {$\mu_{\mathcal A \E a \to b \R , \mathcal A \E b \to c \R}^{\mathcal F_B}$};
      \node[draw, rectangle] (repab) at (0,4) {$R \E \epsilon_{a \to b}^{\mathcal F_A} \R$};
      \node[draw, rectangle] (ri1) at (0,3) {$\rho_{a , \mathcal F_A \E \mathcal A \E a \to b \R \R}$};
      \node[draw, rectangle] (repbc) at (1,6) {$R \E \epsilon_{b \to c}^{\mathcal F_A} \R$};
      \node[draw, rectangle] (ri2) at (1,5) {$\rho_{b , \mathcal F_A \E \mathcal A \E b \to c \R \R}$};
      \node[draw, rectangle] (r1) at (1,2) {$r_{\mathcal A \E a \to b \R}$};
      \node[draw, rectangle] (r2) at (3,2) {$r_{\mathcal A \E b \to c \R}$};
      \draw (r-a-bottom) to[in=-90,out=90] (ri1.235);
      \draw (gmm) to[in=-90,out=90] (mu);
      \draw (mu.135) to[in=-90,out=90] (r1);
      \draw (r1) to[in=-90,out=90] (ri1.-45);
      \draw (mu.45) to[in=-90,out=90] (r2);
      \draw (repab) to[in=-90,out=90] (ri2.235);
      \draw (ri1) -- (repab);
      \draw (r2) to[in=-90, out=90] (ri2.-45);
      \draw (ri2) -- (repbc);
      \draw (repbc) -- (r-c-top);
    \end{tikzpicture}
    =
    \begin{tikzpicture}[baseline=60, smallstring]
      \node (r-a-bottom) at (0,0) {$R \E a \R$};
      \node (gmm) at (2,0) {$\DF a \to b ; b \to c \FD_{\mathcal F_B}^{\mathcal A}$};
      \node (r-c-top) at (1,7) {$R \E c \R$};
      \node[draw, rectangle] (mu) at (2,1) {$\mu_{\mathcal A \E a \to b \R , \mathcal A \E b \to c \R}^{\mathcal F_B}$};
      \node[draw, rectangle] (repab) at (1,5) {$R \E \epsilon_{a \to b}^{\mathcal F_A} \id_{\mathcal F_A \E \mathcal A \E b \to c \R \R} \R$};
      \node[draw, rectangle] (ri1) at (0,3) {$\rho_{a , \mathcal F_A \E \mathcal A \E a \to b \R \R}$};
      \node[draw, rectangle] (repbc) at (1,6) {$R \E \epsilon_{b \to c}^{\mathcal F_A} \R$};
      \node[draw, rectangle] (ri2) at (1,4) {$\rho_{a \mathcal F_A \E \mathcal A \E a \to b \R \R , \mathcal F_A \E \mathcal A \E b \to c \R \R}$};
      \node[draw, rectangle] (r1) at (1,2) {$r_{\mathcal A \E a \to b \R}$};
      \node[draw, rectangle] (r2) at (3,2) {$r_{\mathcal A \E b \to c \R}$};
      \draw (r-a-bottom) to[in=-90,out=90] (ri1.235);
      \draw (gmm) to[in=-90,out=90] (mu);
      \draw (mu.135) to[in=-90,out=90] (r1);
      \draw (r1) to[in=-90,out=90] (ri1.-45);
      \draw (mu.45) to[in=-90,out=90] (r2);
      \draw (repab) to[in=-90,out=90] (repbc);
      \draw (ri1) to[in=-90,out=90] (ri2.225);
      \draw (r2) to[in=-90, out=90] (ri2.-45);
      \draw (ri2) to[in=-90,out=90] (repab);
      \draw (repbc) to[in=-90,out=90] (r-c-top);
    \end{tikzpicture}
  \end{align*}
  Next, use associativity of $\rho$, the action-coherence condition, and naturality of $\rho$ again to see that this is
  \begin{align*}
    \begin{tikzpicture}[baseline=60, smallstring]
      \node (r-a-bottom) at (0,0) {$R \E a \R$};
      \node (gmm) at (3,0) {$\DF a \to b ; b \to c \FD_{\mathcal F_B}^{\mathcal A}$};
      \node (r-c-top) at (1,7) {$R \E c \R$};
      \node[draw, rectangle] (mu) at (3,1) {$\mu_{\mathcal A \E a \to b \R , \mathcal A \E b \to c \R}^{\mathcal F_B}$};
      \node[draw, rectangle] (repab) at (1,5) {$R \E \epsilon_{a \to b}^{\mathcal F_A} \id_{\mathcal F_A \E \mathcal A \E b \to c \R \R} \R$};
      \node[draw, rectangle] (ri1) at (3,3) {$\rho_{\mathcal F_A \E \mathcal A \E a \to b \R \R , \mathcal F_A \E \mathcal A \E b \to c \R \R}$};
      \node[draw, rectangle] (repbc) at (1,6) {$R \E \epsilon_{b \to c}^{\mathcal F_A} \R$};
      \node[draw, rectangle] (ri2) at (1,4) {$\rho_{a , \mathcal F_A \E \mathcal A \E a \to b \R \R \mathcal F_A \E \mathcal A \E b \to c \R \R}$};
      \node[draw, rectangle] (r1) at (2,2) {$r_{\mathcal A \E a \to b \R}$};
      \node[draw, rectangle] (r2) at (4,2) {$r_{\mathcal A \E b \to c \R}$};
      \draw (r-a-bottom) to[in=-90,out=90] (ri2.200);
      \draw (gmm) to[in=-90,out=90] (mu);
      \draw (mu.135) to[in=-90,out=90] (r1);
      \draw (r1) to[in=-90,out=90] (ri1.225);
      \draw (mu.45) to[in=-90,out=90] (r2);
      \draw (repab) to[in=-90,out=90] (repbc);
      \draw (ri1) to[in=-90,out=90] (ri2.-20);
      \draw (r2) to[in=-90, out=90] (ri1.-45);
      \draw (ri2) to[in=-90,out=90] (repab);
      \draw (repbc) to[in=-90,out=90] (r-c-top);
    \end{tikzpicture}
    =
    \begin{tikzpicture}[baseline=50, smallstring]
      \node (r-a-bottom) at (0,0) {$R \E a \R$};
      \node (gmm) at (2.5,0) {$\DF a \to b ; b \to c \FD_{\mathcal F_B}^{\mathcal A}$};
      \node (r-c-top) at (1,6) {$R \E c \R$};
      \node[draw, rectangle] (mu) at (2.5,1.85) {$R \E \mu_{\mathcal A \E a \to b \R , \mathcal A \E b \to c \R}^{\mathcal F_A} \R$};
      \node[draw, rectangle] (repab) at (1,4) {$R \E \epsilon_{a \to b}^{\mathcal F_A} \id_{\mathcal F_A \E \mathcal A \E b \to c \R \R} \R$};
      \node[draw, rectangle] (repbc) at (1,5) {$R \E \epsilon_{b \to c}^{\mathcal F_A} \R$};
      \node[draw, rectangle] (ri2) at (1,3) {$\rho_{a , \mathcal F_A \E \mathcal A \E a \to b \R \R \mathcal F_A \E \mathcal A \E b \to c \R \R}$};
      \node[draw, rectangle] (r) at (2.5,1) {$r_{\mathcal A \E a \to b \R \mathcal A \E b \to c \R}$};
      \draw (r-a-bottom) to[in=-90,out=90] (ri2.200);
      \draw (gmm) to[in=-90,out=90] (r);
      \draw (r) to[in=-90,out=90] (mu);
      \draw (repab) to[in=-90,out=90] (repbc);
      \draw (mu) to[in=-90,out=90] (ri2.-20);
      \draw (ri2) to[in=-90,out=90] (repab);
      \draw (repbc) to[in=-90,out=90] (r-c-top);
    \end{tikzpicture}
    =
    \begin{tikzpicture}[baseline=50, smallstring]
      \node (r-a-bottom) at (0,0) {$R \E a \R$};
      \node (gmm) at (2,0) {$\DF a \to b ; b \to c \FD_{\mathcal F_B}^{\mathcal A}$};
      \node (r-c-top) at (1,6) {$R \E c \R$};
      \node[draw, rectangle] (mu) at (1,3) {$R \E \id_a \mu_{\mathcal A \E a \to b \R , \mathcal A \E b \to c \R}^{\mathcal F_A} \R$};
      \node[draw, rectangle] (repab) at (1,4) {$R \E \epsilon_{a \to b}^{\mathcal F_A} \id_{\mathcal F_A \E \mathcal A \E b \to c \R \R} \R$};
      \node[draw, rectangle] (repbc) at (1,5) {$R \E \epsilon_{b \to c}^{\mathcal F_A} \R$};
      \node[draw, rectangle] (ri) at (1,2) {$\rho_{a , \mathcal F_B \E \mathcal A \E a \to b \R \mathcal A \E b \to c \R \R}$};
      \node[draw, rectangle] (r) at (2,1) {$r_{\mathcal A \E a \to b \R \mathcal A \E b \to c \R}$};
      \draw (r-a-bottom) to[in=-90,out=90] (ri.225);
      \draw (gmm) to[in=-90,out=90] (r);
      \draw (r) to[in=-90,out=90] (ri.-45);
      \draw (repab) to[in=-90,out=90] (repbc);
      \draw (ri) to[in=-90,out=90] (mu);
      \draw (mu) to[in=-90,out=90] (repab);
      \draw (repbc) to[in=-90,out=90] (r-c-top);
    \end{tikzpicture}
  \end{align*}
  Lastly, we recognize the mate of $\blank \circ_{\mathcal A} \blank$ ($R$ is a functor), use lemma \ref{unit-counit mates}, and finish with naturality of $\rho$ and $r$:
  \begin{align*}
    \begin{tikzpicture}[baseline=40, smallstring]
      \node (r-a-bottom) at (0,0) {$R \E a \R$};
      \node (gmm) at (2,0) {$\DF a \to b ; b \to c \FD_{\mathcal F_B}^{\mathcal A}$};
      \node (r-c-top) at (1,4) {$R \E c \R$};
      \node[draw, rectangle] (circ) at (1,3) {$R \E \mate \E \blank \circ_{\mathcal A} \blank \R \R$};
      \node[draw, rectangle] (ri) at (1,2) {$\rho_{a , \mathcal F_B \E \mathcal A \E a \to b \R \mathcal A \E b \to c \R \R}$};
      \node[draw, rectangle] (r) at (2,1) {$r_{\mathcal A \E a \to b \R \mathcal A \E b \to c \R}$};
      \draw (r-a-bottom) to[in=-90,out=90] (ri.225);
      \draw (gmm) to[in=-90,out=90] (r);
      \draw (r) to[in=-90,out=90] (ri.-45);
      \draw (ri) to[in=-90,out=90] (circ);
      \draw (circ) to[in=-90,out=90] (r-c-top);
    \end{tikzpicture}
    =
    \begin{tikzpicture}[baseline=35, smallstring]
      \node (rabottom) at (1,0) {$R \E a \R$};
      \node (gmm) at (3.5,0) {$\DF a \to b ; b \to c \FD_{\mathcal F_B}^{\mathcal A}$};
      \node (rctop) at (2,5) {$R \E c \R$};
      \node[draw, rectangle] (g) at (3.5, 0.85) {$r_{\mathcal A \E a \to b \R \mathcal A \E b \to c \R}$};
      \node[draw, rectangle] (rfcirc) at (2, 3) {$R \E \id_a \mathcal F_A \E \blank \circ_{\mathcal A} \blank \R \R$};
      \node[draw, rectangle] (rho) at (2, 2) {$\rho_{a , \mathcal A \E a \to b \R \mathcal A \E b \to c \R}$};
      \node[draw, rectangle] (rep) at (2, 4) {$R \E \epsilon_{a \to c}^{\mathcal F_A} \R$};
      \draw (gmm) to (g);
      \draw (rfcirc) to (rep);
      \draw (rabottom) to[in=-90, out=90] (rho.235);
      \draw (g) to[in=-90, out=90] (rho.305);
      \draw (rho) to (rfcirc);
      \draw (rep) to (rctop);
    \end{tikzpicture}
    =
    \begin{tikzpicture}[baseline=50, smallstring]
      \node (rabottom) at (1,0) {$R \E a \R$};
      \node (gmm) at (3.5,0) {$\DF a \to b ; b \to c \FD_{\mathcal F_B}^{\mathcal A}$};
      \node (rctop) at (2,5) {$R \E c \R$};
      \node[draw, rectangle] (g) at (3.5, 1) {$r_{\mathcal A \E a \to b \R \mathcal A \E b \to c \R}$};
      \node[draw, rectangle] (rfcirc) at (3.5, 1.75) {$R \E \mathcal F_A \E \blank \circ_{\mathcal A} \blank \R \R$};
      \node[draw, rectangle] (rho) at (2, 3) {$\rho_{a , \DF a \to c \FD_{\mathcal F_A}^{\mathcal A}}$};
      \node[draw, rectangle] (rep) at (2, 4) {$R \E \epsilon_{a \to c}^{\mathcal F_A} \R$};
      \draw (gmm) to (g);
      \draw (g) to (rfcirc);
      \draw (rabottom) to[in=-90, out=90] (rho.235);
      \draw (rfcirc) to[in=-90, out=90] (rho.305);
      \draw (rho) to (rep);
      \draw (rep) to (rctop);
    \end{tikzpicture}
    =
    \begin{tikzpicture}[baseline=50, smallstring]
      \node (rabottom) at (1,0) {$R \E a \R$};
      \node (gmm) at (3.5,0) {$\DF a \to b ; b \to c \FD_{\mathcal F_B}^{\mathcal A}$};
      \node (rctop) at (2,5) {$R \E c \R$};
      \node[draw, rectangle] (gcirc) at (3.5, 1) {$\mathcal F_B \E \blank \circ_{\mathcal A} \blank \R$};
      \node[draw, rectangle] (g) at (3.5, 1.75) {$r_{\mathcal A \E a \to c \R}$};
      \node[draw, rectangle] (rho) at (2, 3) {$\rho_{a , \DF a \to c \FD_{\mathcal F_A}^{\mathcal A}}$};
      \node[draw, rectangle] (rep) at (2, 4) {$R \E \epsilon_{a \to c}^{\mathcal F_A} \R$};
      \draw (gmm) to (gcirc);
      \draw (gcirc) to (g);
      \draw (rabottom) to[in=-90, out=90] (rho.235);
      \draw (g) to[in=-90, out=90] (rho.305);
      \draw (rho) to (rep);
      \draw (rep) to (rctop);
    \end{tikzpicture}
  \end{align*}
  This is exactly the mate of the right hand side via the composition lemma.
\end{proof}

\begin{lem}
  We also have that $\mathcal R$ preserves identity objects, and thus is a $\mathcal V$-functor: For all $a \in A$,
  \begin{align*}
    \begin{tikzpicture}[baseline=15, smallstring]
      \node (top) at (0,2) {$\mathcal B \E R \E a \R \to R \E a \R \R$};
      \node[draw, rectangle] (j) at (0,0) {$j_a^{\mathcal A}$};
      \node[draw, rectangle] (r) at (0,1) {$\mathcal R_{a \to a}$};
      \draw (j) to[in=-90,out=90] (r);
      \draw (r) to[in=-90,out=90] (top);
    \end{tikzpicture}
    =
    \begin{tikzpicture}[baseline=5, smallstring]
      \node (top) at (0,1) {$\mathcal B \E R \E a \R \to R \E a \R \R$};
      \node[draw, rectangle] (j) at (0,0) {$j_{R \E a \R}^{\mathcal B}$};
      \draw (j) to[in=-90,out=90] (top);
    \end{tikzpicture}
  \end{align*}

\end{lem}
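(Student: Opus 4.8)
The plan is to verify the identity after taking mates under Adjunction \ref{Main Adjunction} in the degenerate case $v = 1_{\mathcal V}$, where the adjunction becomes the equality $B \E R \E a \R \to R \E a \R \R = \mathcal V \E 1_{\mathcal V} \to \mathcal B \E R \E a \R \to R \E a \R \R \R$. Under this identification the desired right-hand side $j_{R \E a \R}^{\mathcal B}$ is nothing but $\id_{R \E a \R}$, by the $v = 1_{\mathcal V}$ clause of Lemma \ref{unit-counit mates}. Thus it suffices to show that the mate of the left-hand side $j_a^{\mathcal A} \circ \mathcal R_{a \to a}$ is also $\id_{R \E a \R}$.

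First I would apply Lemma \ref{Composition Lemma} to pull the $1_{\mathcal V}$-graded morphism $j_a^{\mathcal A}$ outside the mate, obtaining $\mate \E j_a^{\mathcal A} \circ \mathcal R_{a \to a} \R = \mathcal F_B \E j_a^{\mathcal A} \R \circ \mate \E \mathcal R_{a \to a} \R$, and then substitute the definition \eqref{V-Functor Definition} of $\mate \E \mathcal R_{a \to a} \R$, namely the string diagram assembled from $r_{\mathcal A \E a \to a \R}$, $\rho_{a, \DF a \to a \FD_{\mathcal F_A}^{\mathcal A}}$, and $R \E \epsilon_{a \to a}^{\mathcal F_A} \R$. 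The next step is to slide $\mathcal F_B \E j_a^{\mathcal A} \R$ up through the diagram using the structural naturalities. Naturality of $r$ at the morphism $j_a^{\mathcal A} \colon 1_{\mathcal V} \to \mathcal A \E a \to a \R$ gives $\mathcal F_B \E j_a^{\mathcal A} \R \circ r_{\mathcal A \E a \to a \R} = r_{1_{\mathcal V}} \circ R \E \mathcal F_A \E j_a^{\mathcal A} \R \R$, and strict unitality $r_{1_{\mathcal V}} = \id$ collapses this to $R \E \mathcal F_A \E j_a^{\mathcal A} \R \R$. Then naturality of the laxitor $\rho$ in its second argument, together with strict unitality $\rho_{a, 1_A} = \id_{R \E a \R}$, absorbs that morphism into the functor, leaving $R \E \E \id_a \otimes \mathcal F_A \E j_a^{\mathcal A} \R \R \circ \epsilon_{a \to a}^{\mathcal F_A} \R$.

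The key recognition step is that $\E \id_a \otimes \mathcal F_A \E j_a^{\mathcal A} \R \R \circ \epsilon_{a \to a}^{\mathcal F_A}$ is exactly the expression on the right of Lemma \ref{unit-counit mates} for $f = j_a^{\mathcal A} \in \mathcal V \E 1_{\mathcal V} \to \mathcal A \E a \to a \R \R$; that is, it equals $\mate \E j_a^{\mathcal A} \R$, and by the $v = 1_{\mathcal V}$ clause of the same lemma this is $j_a^{\mathcal A}$ regarded as a morphism in $A \E a \to a \R$, i.e.\ $\id_a$. Applying $R$ yields $\id_{R \E a \R}$, which coincides with $\mate \E j_{R \E a \R}^{\mathcal B} \R$; since the adjunction is a bijection, the two $1_{\mathcal V}$-graded morphisms are equal, establishing unit preservation.

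I expect the only delicate point to be the diagrammatic bookkeeping of the unit-object strands, together with confirming that the two strict-unitality hypotheses carried by a $\vmodtens$ $1$-cell, namely $r_{1_{\mathcal V}} = \id$ and $\rho_{a, 1_A} = \id$, are precisely what is needed to collapse the diagram at each stage; once the recognition of $\mate \E j_a^{\mathcal A} \R$ is in place the computation terminates immediately.
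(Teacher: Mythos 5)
Your proposal is correct and follows essentially the same route as the paper's proof: take mates under the $v=1_{\mathcal V}$ case of Adjunction \ref{Main Adjunction}, pull $j_a^{\mathcal A}$ out via Lemma \ref{Composition Lemma}, convert $\mathcal F_B \E j_a^{\mathcal A}\R \circ r_{\mathcal A\E a \to a\R}$ into $R\E \mathcal F_A \E j_a^{\mathcal A}\R\R$ by naturality and strict unitality of $r$, absorb it using naturality and unitality of $\rho$, and recognize $\mate\E j_a^{\mathcal A}\R = \id_a$ via Lemma \ref{unit-counit mates}, so the whole expression collapses to $R\E \id_a \R = \mate\E j_{R\E a\R}^{\mathcal B}\R$. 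The paper's string-diagram computation performs exactly these steps in the same order.
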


\begin{proof}
  Using Lemma \ref{Composition Lemma} followed by naturality and unitality of $r$ and $\rho$, the mate of the left hand side is
  \begin{align*}
    \begin{tikzpicture}[baseline=25, smallstring]
      \node[] (Ra-bottom) at (0,-1.5) {$R \E a \R$};
      \node[] (Ra-top) at (1,3.5) {$R \E a \R$};
      \node[draw,rectangle] (Gj) at (2,-0.5) {$\mathcal F_B \E j_a^{\mathcal A} \R$};
      \node[draw,rectangle] (gamma) at (2,0.5) {$r_{j_a^{\mathcal A}}$};
      \node[draw,rectangle] (rho) at (1,1.5) {$\rho_{a , \mathcal F_A \E j_a^{\mathcal A} \R}$};
      \node[draw,rectangle] (Rep) at (1,2.5) {$R \E \epsilon_{a \to a}^{\mathcal F_A} \R$};
      \draw (Ra-bottom.90) to[in=-90,out=90] (rho.-135);
      \draw (Gj.90) to[in=-90,out=90] (gamma.-90);
      \draw (gamma.90) to[in=-90,out=90] (rho.-45);
      \draw (rho.90) to[in=-90,out=90] (Rep.270);
      \draw (Rep.90) to[in=-90,out=90] (Ra-top.270);
    \end{tikzpicture}
    \commentout{\te{ by naturality and strict unitality of $s$}} =
    \begin{tikzpicture}[baseline=25, smallstring]
      \node[] (Ra-bottom) at (0,-1) {$R \E a \R$};
      \node[] (Ra-top) at (1,3) {$R \E a \R$};
      \node[draw,rectangle] (rfj) at (2,0) {$R \E \mathcal F_A \E j_a^{\mathcal A} \R \R$};
      \node[draw,rectangle] (rho) at (1,1) {$\rho_{a , \mathcal F_A \E j_a^{\mathcal A} \R}$};
      \node[draw,rectangle] (Rep) at (1,2) {$R \E \epsilon_{a \to a}^{\mathcal F_A} \R$};
      \draw (Ra-bottom.90) to[in=-90,out=90] (rho.-135);
      \draw (rfj.90) to[in=-90,out=90] (rho.-45);
      \draw (rho.90) to[in=-90,out=90] (Rep.270);
      \draw (Rep.90) to[in=-90,out=90] (Ra-top.270);
    \end{tikzpicture}
    =
    \begin{tikzpicture}[baseline=35, smallstring]
      \node[] (Ra-bottom) at (0,0) {$R \E a \R$};
      \node[] (Ra-top) at (0,3) {$R \E a \R$};
      \node[draw,rectangle] (Rep) at (0,2) {$R \E \epsilon_{a \to a}^{\mathcal F_A} \R$};
      \node[draw,rectangle] (rfj) at (0,1) {$R \E \id_a \mathcal F_A \E j_a^{\mathcal A} \R \R$};
      \draw (Ra-bottom.90) to[in=-90,out=90] (rfj);
      \draw (rfj.90) to[in=-90,out=90] (Rep.270);
      \draw (Rep.90) to[in=-90,out=90] (Ra-top.270);
    \end{tikzpicture}
    \commentout{\te{ mate of $j_a^{\mathcal A}$}} =
    \begin{tikzpicture}[baseline=20, smallstring]
      \node[] (Ra-bottom) at (1,-0) {$R \E a \R$};
      \node[] (Ra-top) at (1,2) {$R \E a \R$};
      \node[draw,rectangle] (r1) at (1,1) {$R \E \id_a \R$};
      \draw (Ra-bottom.90) to[in=-90,out=90] (r1);
      \draw (r1.90) to[in=-90,out=90] (Ra-top.270);
    \end{tikzpicture}
    =
    \mate \E j_{R \E a \R}^{\mathcal B} \R
  \end{align*}
\end{proof}

\begin{lem}
  The $\mathcal V$-functor $\mathcal R$ is strictly unital, and the laxitor $\rho^R$ is strongly unital, i.e., $\mathcal R \E 1_{\mathcal A} \R = 1_{\mathcal B}$ and $\rho_{1_{\mathcal A} , a}^R = \id_{\mathcal R \E a \R} = \rho_{a , 1_{\mathcal A}}^R$.
\end{lem}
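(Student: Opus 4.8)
The plan is to dispatch all four identities by unwinding the definitions and appealing to the previous two lemmas, together with the strict unitality that is built into a $\vmodtens$ $1$-cell; no substantial computation should be required.

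First I would handle $\mathcal R(1_{\mathcal A}) = 1_{\mathcal B}$. Since $\mathcal R$ agrees with $R$ on objects by construction, this reduces to $R(1_A) = 1_B$, which is part of the definition of a $1$-cell $\E R,\rho,r \R$ in $\vmodtens$; and $1_{\mathcal B}$ is by definition the unit object of the underlying monoidal category $B$, so the two sides literally coincide.

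Next, to complete the claim that $\mathcal R$ is a strictly unital $\mathcal V$-monoidal functor I must verify $j_{1_{\mathcal A}}^{\mathcal A} \circ \mathcal R_{1_{\mathcal A} \to 1_{\mathcal A}} = j_{1_{\mathcal B}}^{\mathcal B}$. I would observe that this is precisely the unit-preservation axiom for $\mathcal R$ as a $\mathcal V$-functor, evaluated at the object $a = 1_{\mathcal A}$, which was established in the previous lemma; the identification $j_{R \E 1_{\mathcal A} \R}^{\mathcal B} = j_{1_{\mathcal B}}^{\mathcal B}$ is immediate from the first step.

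Finally, for strong unitality of the laxitor I would use that $\rho_{a,b}^R = \rho_{a,b}$ by definition, so that $\rho^R$ is literally the laxitor of the (strictly unital) underlying lax monoidal functor $\E R,\rho \R$ of the $\vmodtens$ $1$-cell. Because the underlying monoidal categories are strict, strict unitality of $\E R,\rho \R$ forces the unit components of the laxitor to be identities, $\rho_{1_A,a} = \id_{R \E a \R} = \rho_{a,1_A}$, via the unit coherence axioms of a lax monoidal functor; rewriting through $\rho^R = \rho$ and $\mathcal R \E a \R = R \E a \R$ then yields $\rho_{1_{\mathcal A},a}^R = \id_{\mathcal R \E a \R} = \rho_{a,1_{\mathcal A}}^R$. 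The only point requiring care is confirming that the paper's notion of a strictly unital ordinary monoidal functor indeed delivers identity unit-laxitors in the strict setting, rather than merely $R \E 1_A \R = 1_B$; once that convention is pinned down the statement is immediate, so I expect no genuine obstacle here.
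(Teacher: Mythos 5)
Your proof is correct and follows essentially the same route as the paper's: strict unitality of $R$ gives $\mathcal R ( 1_{\mathcal A} ) = R ( 1_{\mathcal A} ) = 1_{\mathcal B}$, and strong unitality of $\rho$ (which the paper simply asserts and you correctly derive from the unit coherence axioms in the strict setting) transfers to $\rho^R$ under the identity adjunction. Your additional verification that $j_{1_{\mathcal A}}^{\mathcal A} \circ \mathcal R_{1_{\mathcal A} \to 1_{\mathcal A}} = j_{1_{\mathcal B}}^{\mathcal B}$, via the unit-preservation lemma evaluated at $a = 1_{\mathcal A}$, is a welcome completion of the strict-unitality claim that the paper's proof leaves implicit.
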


\begin{proof}
  First, $R$ is strictly unital, so $\mathcal R \E 1_{\mathcal A} \R = R \E 1_{\mathcal A} \R = 1_{\mathcal B}$.\\
  Then $\rho$ is strongly unital, so 

  \begin{align*}
    \rho_{1_{\mathcal A} , a}^R
    &= \mate \E \rho_{1_{\mathcal A} , a} \R = \mate \E \id_{R \E a \R} \R = j_a^{\mathcal B}
  \end{align*}

  and similarly for the other side.
\end{proof}


\begin{lem}
  The laxitor $\rho^R$ is natural: For all $a , b , c , d \in A$,

  \begin{align*}

  \end{align*}
  which is exactly the mate of the right hand side.
\end{proof}

We have proven:

\begin{prop}
  The pair $\E \mathcal R , \rho^R \R$ defined above is a 1-cell in $\vmoncat$.
\end{prop}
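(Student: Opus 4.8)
The plan is to recognize that, with all the preceding lemmas in hand, establishing this Proposition amounts to checking $\E \mathcal R , \rho^R \R$ against each clause in the definition of a strictly unital lax $\mathcal V$-monoidal functor, which is exactly what a $1$-cell of $\vmoncat$ is. I would therefore proceed purely by assembly, citing the lemmas in the order in which the axioms appear in that definition.

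First I would confirm that $\mathcal R$ is a $\mathcal V$-functor. The assignment $a \mapsto R \E a \R$ on objects together with the morphisms $\mathcal R_{a \to b}$ from Equation~\eqref{V-Functor Definition} must respect composition and preserve the enriched units; these are precisely the two functoriality results just proved, namely the functoriality lemma for $\blank \circ_{\mathcal A} \blank$ and the lemma that $\mathcal R$ preserves the identities $j_a$.

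Next I would verify strict unitality of the functor together with strong unitality of the laxitor. The unitality lemma gives $\mathcal R \E 1_{\mathcal A} \R = R \E 1_{\mathcal A} \R = 1_{\mathcal B}$, and specializing the identity-preservation lemma to $a = 1_{\mathcal A}$ delivers the required unit condition $j_{1_{\mathcal A}}^{\mathcal A} \circ \mathcal R_{1_{\mathcal A} \to 1_{\mathcal A}} = j_{1_{\mathcal B}}^{\mathcal B}$; the same lemma records $\rho_{1_{\mathcal A} , a}^R = \id_{\mathcal R \E a \R} = \rho_{a , 1_{\mathcal A}}^R$. Finally, the naturality lemma and the associativity lemma are exactly the two remaining laxitor axioms. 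Concatenating these observations shows that $\E \mathcal R , \rho^R \R$ satisfies every clause of the definition, hence is a $1$-cell in $\vmoncat$.

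I do not expect a genuine obstacle at this stage: the substantive content — in particular the functoriality of $\mathcal R$ and the naturality of $\rho^R$, both of which crucially use the action-coherence and half-braiding coherence conditions built into the $1$-cell $\E R , \rho , r \R$ of $\vmodtens$ — has already been absorbed into the preceding lemmas. The only care needed is to ensure that each clause of the definition of a strictly unital lax $\mathcal V$-monoidal functor has a matching lemma, which the list above confirms; this final step is pure bookkeeping.
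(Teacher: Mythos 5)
Your proposal is correct and matches the paper's own argument: the paper likewise offers no separate proof, but simply concludes the proposition from the preceding lemmas (functoriality, unit preservation, strict/strong unitality, naturality, and associativity of $\rho^R$), introducing it with ``We have proven.'' Your additional remark that the strict unit condition $j_{1_{\mathcal A}}^{\mathcal A} \circ \mathcal R_{1_{\mathcal A} \to 1_{\mathcal A}} = j_{1_{\mathcal B}}^{\mathcal B}$ follows by specializing the identity-preservation lemma to $a = 1_{\mathcal A}$ is a correct piece of bookkeeping the paper leaves implicit.
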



We've constructed a map from 1-cells of $\vmodtens$ to 1-cells of $\vmoncat$.  To show that $P$ is essentially surjective on 1-cells, it now suffices to prove the following proposition.

\begin{prop}
  For each pair $\mathcal A , \mathcal B$ of 0-cells in $\vmoncat$, the above map is an essential inverse to $P_{\mathcal A \to \mathcal B}$ on 1-cells.
\end{prop}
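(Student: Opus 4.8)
The plan is to exhibit the construction above, which I denote $Q\colon (R,\rho,r)\mapsto(\mathcal R,\rho^R)$, as a genuine essential inverse to $P_{\mathcal A\to\mathcal B}$ on objects of the hom-categories. I will obtain this from two ingredients: first, that $P_{\mathcal A\to\mathcal B}$ is fully faithful; and second, that $P_{\mathcal A\to\mathcal B}\circ Q=\id$ on $1$-cells, i.e.\ $P_{\mathcal A\to\mathcal B}(Q(R,\rho,r))=(R,\rho,r)$. Granting these, the isomorphism $Q\circ P_{\mathcal A\to\mathcal B}\cong\id$ is formally forced: for any $1$-cell $(\mathcal R,\rho^R)$ the two $1$-cells $Q(P_{\mathcal A\to\mathcal B}(\mathcal R,\rho^R))$ and $(\mathcal R,\rho^R)$ have the same image under $P_{\mathcal A\to\mathcal B}$, so the identity $2$-cell on that image lifts, uniquely by full faithfulness, to a natural isomorphism between them. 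Thus it suffices to establish the two ingredients.

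Full faithfulness is essentially already in hand. On $2$-cells $P$ acts by $\theta\mapsto\Theta=\mate(\theta)$ under the identity adjunction $\mathcal V(1_{\mathcal V}\to\mathcal B(\mathcal R(a)\to\mathcal S(a)))=\mathcal B(\mathcal R(a)\to\mathcal S(a))$, a bijection on the underlying data; the Map on $2$-cells proposition shows that a $1_{\mathcal V}$-graded monoidal natural transformation corresponds precisely to a monoidal natural transformation satisfying the action-coherence condition $s_v\circ\Theta_{\mathcal F_A(v)}=r_v$, and the naturality and monoidality axioms on the two sides match term by term under the mate. Hence for fixed source and target $1$-cells the assignment on $2$-cells is a bijection, so $P_{\mathcal A\to\mathcal B}$ is fully faithful.

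For $P_{\mathcal A\to\mathcal B}\circ Q=\id$ I check the three components of a $\vmodtens$ $1$-cell. The laxitor is immediate, since $Q$ sets $\rho^R_{a,b}:=\rho_{a,b}$ and $P$ returns its mate $\rho_{a,b}$ under the equality adjunction. For the underlying functor $\mathcal R^{\mathcal V}$ it is enough to check agreement on $1_{\mathcal V}$-graded morphisms $f\in A(a\to b)$: unwinding $\mathcal R^{\mathcal V}(f)=f\circ_{\mathcal B}\mathcal R_{a\to b}$ and using Lemma~\ref{Composition Lemma} to move $f$ across Adjunction~\ref{Main Adjunction} in its $\mathcal V$-variable, together with strict unitality of $\rho$ and naturality and strict unitality of $r$, reduces it to $R\bigl((\id_a\otimes\mathcal F_A(f))\circ\epsilon^{\mathcal F_A}_{a\to b}\bigr)$, which is $R(f)$ by the $v=1_{\mathcal V}$ case of Lemma~\ref{unit-counit mates}. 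For the reconstructed coherence, $P$ produces $r'_v=\mate(\eta_v^{\mathcal F_A}\circ\mathcal R_{1_{\mathcal A}\to\mathcal F_A(v)})$ as in Equation~\ref{Define r}; substituting the defining mate (Equation~\ref{V-Functor Definition}) of $\mathcal R_{1_{\mathcal A}\to\mathcal F_A(v)}$, in which the strictly unital $\rho_{1_{\mathcal A},-}$ drops out, and applying Lemma~\ref{Composition Lemma} and the naturality of $r$, the expression becomes $r_v\circ R\bigl(\mathcal F_A(\eta_v^{\mathcal F_A})\circ\epsilon^{\mathcal F_A}_{1_{\mathcal A}\to\mathcal F_A(v)}\bigr)$, and the triangle identity $\mathcal F_A(\eta_v^{\mathcal F_A})\circ\epsilon^{\mathcal F_A}_{1_{\mathcal A}\to\mathcal F_A(v)}=\id_{\mathcal F_A(v)}$ then gives $r'_v=r_v$.

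The main obstacle is bookkeeping rather than depth: both $r$ and $\mathcal R_{a\to b}$ are themselves defined as mates, so these computations are mates of composites assembled from mates, and the delicate point is to sequence the unit/counit collapses and the naturality and unitality rewrites of $\rho$ and $r$ so that the inner adjunction cancels cleanly (and, in the tensor-based identities, so that the Drinfeld-center half-braidings introduced by Lemma~\ref{mate of fgtens} are accounted for); each individual step, however, is supplied by the mate lemmas of Section~\ref{mate calc} and the underlying correspondence of \cite{MR3961709}. One could instead verify $Q\circ P_{\mathcal A\to\mathcal B}=\id$ directly, by showing that the $\mathcal V$-functor structure rebuilt in Equation~\ref{V-Functor Definition} recovers $\mathcal R_{a\to b}$, but this is the most laborious route and is rendered unnecessary by the formal argument above. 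Combining full faithfulness with essential surjectivity, i.e.\ the equality $P_{\mathcal A\to\mathcal B}\circ Q=\id$, we conclude that $P_{\mathcal A\to\mathcal B}$ is an equivalence of categories with essential inverse $Q$, as claimed.
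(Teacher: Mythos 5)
Your verification of the composite $P_{\mathcal A \to \mathcal B} \circ Q = \id$ is correct and is essentially the paper's own proof: you check the same three components (underlying functor, laxitor, action-coherence) with the same sequence of moves --- Lemma \ref{Composition Lemma}, naturality and strict unitality of $r$ and $\rho$, and the $v = 1_{\mathcal V}$ case of Lemma \ref{unit-counit mates} for the functor part; strict unitality of $\rho$, Lemma \ref{Composition Lemma}, naturality of $r$, and the identity $\mathcal F_A ( \eta_v^{\mathcal F_A} ) \circ \epsilon_{1_{\mathcal A} \to \mathcal F_A ( v )}^{\mathcal F_A} = \id_{\mathcal F_A ( v )}$ for the coherence part. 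That computation is the entire content of the paper's proof, which in effect establishes (strict, hence essential) surjectivity of $P_{\mathcal A \to \mathcal B}$ on 1-cells and leaves the reverse composite to a closing remark about mates.

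The genuine gap is in your formal derivation of $Q \circ P_{\mathcal A \to \mathcal B} \cong \id$ from full faithfulness: the instance of fullness you need is circular. Unwind what lifting $\id_{P ( \mathcal R , \rho^R )}$ to a 2-cell $Q ( P ( \mathcal R , \rho^R ) ) \Rightarrow ( \mathcal R , \rho^R )$ means. Since $P$ is the identity on 2-cell data, such a lift must have components $\theta_a = j_{R ( a )}^{\mathcal B}$, and plugging these into the $\vmoncat$-naturality axiom (Equation \ref{vtrans nat}) and cancelling the units by enriched unitality reduces that axiom to the statement $( Q P \mathcal R )_{a \to b} = \mathcal R_{a \to b}$ for all $a , b$ --- precisely the identity $Q ( P ( \mathcal R ) ) = \mathcal R$ you are trying to deduce. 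So the fullness you invoke, for the pair $( Q P \mathcal R , \mathcal R )$, is equivalent to the conclusion, and it cannot be supplied by the sources you cite: the ``Map on 2-cells'' proposition handles only the easy forward direction, while the paper's backward construction of $\theta$ from $\Theta$ (proved later, in the Equivalence section) applies only to 2-cells between $Q$-images, because its proof of naturality expands $\mate ( \mathcal R_{a \to b} )$ via Equation \ref{V-Functor Definition} --- an expansion available for $Q ( P ( \mathcal R ) )$ but not, absent the key identity, for an arbitrary $\mathcal R$. The $\vmoncat$-naturality axiom involves the enriched data $\mathcal R_{a \to b}$, which is simply not expressible in terms of $( R , \rho , r )$ until that identity is proved; your claim that the two sides ``match term by term under the mate'' hides exactly this. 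Consequently the ``laborious route'' you set aside is not rendered unnecessary: any correct proof of the fullness instance you cite contains it, so either prove $Q ( P ( \mathcal R ) ) = \mathcal R$ directly, or prove full faithfulness for arbitrary pairs first (which amounts to the same computation).
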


\begin{proof}
  Starting with a 1-cell $\E R , \rho , r \R$ in $\vmodtens \E ( A , \mathcal F_A^Z ) \to ( B , \mathcal F_B^Z ) \R$, we have
  \begin{align*}
    P_{\mathcal A \to \mathcal B} \E \E R , \rho , r \R \R \coloneqq \E \mathcal R , \rho^R \R, 
  \end{align*}
  where for $a , b \in A$, $\mathcal R \E a \R = R \E a \R$, $\rho_{a , b}^R = \mate \E \rho_{a , b} \R = \rho_{a , b}$ and $\mathcal R_{a \to b}$ is defined in \ref{V-Functor Definition}. Then $( \mathcal R , \rho^R )$ maps back to $( \boldsymbol R , \boldsymbol \rho , \boldsymbol r )$, another 1-cell in $\vmodtens$. Now we show that $( \boldsymbol R , \boldsymbol \rho , \boldsymbol r )$ and $( R , \rho , r )$ are equal as 1-cells.

  \begin{itemize}
  \item As functors. For $a \in A$, $\boldsymbol R \E a \R = \mathcal R \E a \R = R \E a \R$. For $a , b \in A$ and a morphism $f \in A \E a \to b \R = \mathcal V \E 1_{\mathcal V} \to \mathcal A \E a \to b \R \R$, we take the same mate of $f \circ \mathcal R_{a \to b}$ in two different ways. Under adjunction $\ref{Main Adjunction}$, using the definition of $\boldsymbol R$ as the underlying functor of $\mathcal R$, we have
    \begin{align*}
      \mate \E f \circ \mathcal R_{a \to b} \R = \boldsymbol R \E f \R.
    \end{align*}
    Next, we take the same mate via Lemma \ref{Composition Lemma}, making use of naturality and unitality of $r$ and $\rho$:
    \begin{align*}
      \mate \E
      \begin{tikzpicture}[baseline=15, smallstring]
        \node (top) at (0,2) {$\mathcal B \E R \E a \R \to R \E b \R \R$};
        \node[draw, rectangle] (f) at (0,0) {$f$};
        \node[draw, rectangle] (R) at (0,1) {$\mathcal R_{a \to b}$};
        \draw (f) to (R);
        \draw (R) to (top);
      \end{tikzpicture}
      \R
      =
      \begin{tikzpicture}[baseline=25, smallstring]
        \node (a) at (0,0) {$a$};
        \node (b) at (1,3) {$b$};
        \node[draw, rectangle] (f) at (2,1) {$\mathcal F_B \E f \R$};
        \node[draw, rectangle] (R) at (1,2) {$\mate \E \mathcal R_{a \to b} \R$};
        \draw (a) to[in=-90,out=90] (R.225);
        \draw (f) to[in=-90,out=90] (R.-45);
        \draw (R) to[in=-90,out=90] (b);
      \end{tikzpicture}
      =
      \begin{tikzpicture}[baseline=40, smallstring]
        \node (a) at (0,0) {$a$};
        \node (b) at (1,5) {$b$};
        \node[draw, rectangle] (f) at (2,1) {$\mathcal F_B \E f \R$};
        \node[draw, rectangle] (r) at (2,2) {$r_{\mathcal A \E a \to b \R}$};
        \node[draw, rectangle] (rho) at (1,3) {$\rho_{a, \mathcal F_A \E \mathcal A \E a \to b \R \R}$};
        \node[draw, rectangle] (epsilon) at (1,4) {$R \E \epsilon_{a , \mathcal A \E a \to b \R} \R$};
        \draw (a) to[in=-90,out=90] (rho.225);
        \draw (f) to[in=-90,out=90] (r);
        \draw (r) to[in=-90,out=90] (rho.-45);
        \draw (rho) to[in=-90,out=90] (epsilon);
        \draw (epsilon) to[in=-90,out=90] (b);
      \end{tikzpicture}
      =
      \begin{tikzpicture}[baseline=30, smallstring]
        \node (a) at (0,0) {$a$};
        \node (b) at (1,4) {$b$};
        \node[draw, rectangle] (f) at (2,1) {$R \E \mathcal F_A \E f \R \R$};
        \node[draw, rectangle] (rho) at (1,2) {$\rho_{a, \mathcal F_A \E \mathcal A \E a \to b \R \R}$};
        \node[draw, rectangle] (epsilon) at (1,3) {$R \E \epsilon_{a , \mathcal A \E a \to b \R} \R$};
        \draw (a) to[in=-90,out=90] (rho.225);
        \draw (f) to[in=-90,out=90] (rho.-45);
        \draw (rho) to[in=-90,out=90] (epsilon);
        \draw (epsilon) to[in=-90,out=90] (b);
      \end{tikzpicture}
      =
      \begin{tikzpicture}[baseline=25, smallstring]
        \node (a) at (0,0) {$a$};
        \node (b) at (0,3) {$b$};
        \node[draw, rectangle] (f) at (0,1) {$R \E \id_a \mathcal F_A \E f \R \R$};
        \node[draw, rectangle] (epsilon) at (0,2) {$R \E \epsilon_{a , \mathcal A \E a \to b \R} \R$};
        \draw (a) to[in=-90,out=90] (f);
        \draw (f) to[in=-90,out=90] (epsilon);
        \draw (epsilon) to[in=-90,out=90] (b);
      \end{tikzpicture}
    \end{align*}
    By Lemma \ref{unit-counit mates} this is equal on the nose to $R \E f \R$.
  \item As lax monoidal functors: for $a , b \in A$, $\boldsymbol \rho_{a , b} = \rho_{a , b}^R = \rho_{a , b}$.
  \item As 1-cells in $\vmodtens$: for $v \in \mathcal V$, we compute
    \begin{align*}
      \boldsymbol r_v &= \mate \E
      \begin{tikzpicture}[baseline=35, smallstring]
        \node (top) at (0,3) {$\mathcal B \E 1_{\mathcal B} \to R \E \mathcal F_A \E v \R \R \R$};
        \node (bot) at (0,0) {$v$};
        \node[draw, rectangle] (eta) at (0,1) {$\eta_v^{\mathcal F_A}$};
        \node[draw, rectangle] (R) at (0,2) {$\mathcal R_{1_{\mathcal A} \to \mathcal F_A \E v \R}$};
        \draw (bot) to (eta);
        \draw (eta) to (R);
        \draw (R) to (top);
      \end{tikzpicture} \R
      = \mate \E
      \begin{tikzpicture}[baseline=55, smallstring]
        \node (top) at (1,6) {$\mathcal B \E 1_{\mathcal B} \to R \E \mathcal F_A \E v \R \R \R$};
        \node (bot) at (1,0) {$v$};
        \node[draw, rectangle] (eta) at (1,1) {$\eta_v^{\mathcal F_A}$};
        \node[draw, rectangle] (r) at (1,3) {$r_{\mathcal A \E a \to b \R}$};
        \node[draw, rectangle] (epsilon) at (1,4) {$R \E \epsilon_{1_{\mathcal A} \to \mathcal F_A \E v \R}^{\mathcal F_A} \R$};
        \node (empty) at (1,2) {};
        \node (empty2) at (1,5) {};
        \node (mate) at (-1.5,3.5) {$\mate$};
        \draw ($(empty) + (-2,0)$) rectangle ($(empty2) + (2,0)$);
        \draw (bot) to[in=-90,out=90] (eta);
        \draw (eta) to[in=-90,out=90] (empty);
        \draw (empty) to[in=-90,out=90] (r);
        \draw (r) to[in=-90,out=90] (epsilon);
        \draw (epsilon) to[in=-90,out=90] (empty2);
        \draw (empty2) to[in=-90,out=90] (top);
      \end{tikzpicture} \R
      =
      \begin{tikzpicture}[baseline=40, smallstring]
        \node (top) at (0,4) {$R \E \mathcal F_A \E v \R \R$};
        \node (bot) at (0,0) {$\mathcal F_B \E v \R$};
        \node[draw, rectangle] (eta) at (0,1) {$\mathcal F_B \E \eta_v^{\mathcal F_A} \R$};
        \node[draw, rectangle] (r) at (0,2) {$r_{\mathcal A \E a \to b \R}$};
        \node[draw, rectangle] (epsilon) at (0,3) {$R \E \epsilon_{1_{\mathcal A} \to \mathcal F_A \E v \R}^{\mathcal F_A} \R$};
        \draw (bot) to[in=-90,out=90] (eta);
        \draw (eta) to[in=-90,out=90] (r);
        \draw (r) to[in=-90,out=90] (epsilon);
        \draw (epsilon) to[in=-90,out=90] (top);
      \end{tikzpicture}\\
    \end{align*}
    by unitality of $\rho$ and lemma \ref{Composition Lemma}.  We now show that this is equal to $r_v$ using Lemma \ref{unit-counit mates} with $f = \eta_v^{\mathcal F_A}$ and naturality of $r$:
    \begin{align*}
      \begin{tikzpicture}[baseline=20, smallstring]
        \node (bot) at (0,0) {$\mathcal F_B \E v \R$};
        \node (top) at (0,2) {$R \E \mathcal F_A \E v \R \R$};
        \node[draw, rectangle] (r) at (0,1) {$r_v$};
        \draw (bot) to (r);
        \draw (r) to (top);
      \end{tikzpicture}
      =
      \begin{tikzpicture}[baseline=45, smallstring]
        \node (bot) at (0,0) {$\mathcal F_B \E v \R$};
        \node (top) at (0,4) {$R \E \mathcal F_A \E v \R \R$};
        \node[draw, rectangle] (r) at (0,1) {$r_v$};
        \node[draw, rectangle] (eta) at (0,2) {$R \E \mathcal F_A \E \eta_v^{\mathcal F_A} \R \R$};
        \node[draw, rectangle] (epsilon) at (0,3) {$R \E \epsilon_{1_{\mathcal A} \to \mathcal F_A \E v \R} \R$};
        \draw (bot) to (r);
        \draw (r) to (eta);
        \draw (eta) to (epsilon);
        \draw (epsilon) to (top);
      \end{tikzpicture}
      =
      \begin{tikzpicture}[baseline=45, smallstring]
        \node (bot) at (0,0) {$\mathcal F_B \E v \R$};
        \node (top) at (0,4) {$R \E \mathcal F_A \E v \R \R$};
        \node[draw, rectangle] (eta) at (0,1) {$\mathcal F_B \E \eta_v^{\mathcal F_A} \R$};
        \node[draw, rectangle] (r) at (0,2) {$r_{\mathcal A \E 1_{\mathcal A} \to \mathcal F_A \E v \R \R}$};
        \node[draw, rectangle] (epsilon) at (0,3) {$R \E \epsilon_{1_{\mathcal A} \to \mathcal F_A \E v \R} \R$};
        \draw (bot) to (eta);
        \draw (eta) to (r);
        \draw (r) to (epsilon);
        \draw (epsilon) to (top);
      \end{tikzpicture}
    \end{align*}
    
  \end{itemize}
  Therefore composing $P_{A \to B}$ with this inverse map is the same as taking mates twice.
\end{proof}
\begin{defn}
  Given a monoidal natural transformation $\Theta : \E R , \rho , r \R \Rightarrow \E S , \sigma , s \R$, where $\E R , \rho , r \R$ and $\E S , \sigma , s \R$ are 1-cells in $\vmodtens \E \E A , \mathcal F_A \R \to \E B , \mathcal F_B \R \R$, we define a $\mathcal V$-monoidal natural transformation $\theta : \E \mathcal R , \rho^R \R \Rightarrow \E \mathcal S , \sigma^S \R$ via $\theta_a \coloneqq \mate \E \Theta_a \R = \Theta_a$ under the identity adjunction
  \begin{align*}
    \mathcal V \E 1_{\mathcal V} \to \mathcal B \E R \E a \R \to S \E a \R \R \R = B \E R \E a \R \to S \E a \R \R.
  \end{align*}
\end{defn}

\begin{prop}
  As defined, $\theta$ is a $\mathcal V$-monoidal natural transformation.
\end{prop}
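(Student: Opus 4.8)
The plan is to verify the two defining conditions of a $\mathcal V$-monoidal natural transformation separately — the enriched naturality square \ref{vtrans nat} and the monoidality condition \ref{vtrans mon} — throughout using that $\theta_a = \Theta_a$ for every object $a$, and that the laxitors satisfy $\rho^R_{a,b} = \rho_{a,b}$ and $\sigma^S_{a,b} = \sigma_{a,b}$.

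The monoidality condition \ref{vtrans mon} will be immediate. Every morphism appearing in it is $1_{\mathcal V}$-graded, and both $\blank \circ_{\mathcal B} \blank$ and $\blank \otimes_{\mathcal B} \blank$ restrict on $1_{\mathcal V}$-graded morphisms to ordinary composition and tensor in $B$. Hence \ref{vtrans mon} is literally the assertion that $\Theta$ is an ordinary monoidal natural transformation $(R,\rho) \to (S,\sigma)$, which holds by hypothesis. So no real work is needed here.

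The naturality condition \ref{vtrans nat} is the substantive part. First I would take the mate of each side under the $\mathcal B$-version of Adjunction \ref{Main Adjunction} with $v = \mathcal A(a \to b)$; since taking mates is a bijection, it suffices to match the two images in $B(R(a)\mathcal F_B(\mathcal A(a \to b)) \to S(b))$. Applying Lemma \ref{mate of fgcirc} (the oplaxitors are identities because one input is $1_{\mathcal V}$) together with Lemma \ref{unit-counit mates} turns each side into a composite of ordinary morphisms: $\mate(\theta_a) = \Theta_a$ and $\mate(\theta_b) = \Theta_b$, while $\mate(\mathcal R_{a \to b})$ and $\mate(\mathcal S_{a \to b})$ are exactly the inner expressions of Definition \ref{V-Functor Definition} for $(R,\rho,r)$ and $(S,\sigma,s)$. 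Writing $w = \mathcal A(a \to b)$, the left-hand mate becomes $(\Theta_a \otimes s_w) \circ \sigma_{a, \mathcal F_A(w)} \circ S(\epsilon^{\mathcal F_A}_{a \to b})$ and the right-hand mate becomes $(\id_{R(a)} \otimes r_w) \circ \rho_{a, \mathcal F_A(w)} \circ R(\epsilon^{\mathcal F_A}_{a \to b}) \circ \Theta_b$, where $\otimes$ denotes the tensor of morphisms in $B$.

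To identify these I would start from the right-hand expression and push $\Theta$ downward. Ordinary naturality of $\Theta$ applied to the counit $\epsilon^{\mathcal F_A}_{a \to b} : a \mathcal F_A(w) \to b$ replaces $R(\epsilon^{\mathcal F_A}_{a \to b}) \circ \Theta_b$ by $\Theta_{a \mathcal F_A(w)} \circ S(\epsilon^{\mathcal F_A}_{a \to b})$; monoidal naturality of $\Theta$ then rewrites $\rho_{a,\mathcal F_A(w)} \circ \Theta_{a\mathcal F_A(w)}$ as $(\Theta_a \otimes \Theta_{\mathcal F_A(w)}) \circ \sigma_{a, \mathcal F_A(w)}$; and the interchange law collapses $(\id_{R(a)} \otimes r_w) \circ (\Theta_a \otimes \Theta_{\mathcal F_A(w)})$ to $\Theta_a \otimes (r_w \circ \Theta_{\mathcal F_A(w)})$. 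The hard point, and the only place the full structure of a $\vmodtens$ 2-cell is used, is the final step: the action-coherence compatibility $r_w \circ \Theta_{\mathcal F_A(w)} = s_w$ from Definition \ref{Define vmodtens} converts this into $\Theta_a \otimes s_w$, producing exactly the left-hand mate. This establishes \ref{vtrans nat}, and together with the monoidality already in hand it shows that $\theta$ is a $\mathcal V$-monoidal natural transformation.
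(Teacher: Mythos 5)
Your proposal is correct and follows essentially the same route as the paper: for naturality it takes mates under Adjunction \ref{Main Adjunction}, expands $\mate(\mathcal R_{a \to b})$ and $\mate(\mathcal S_{a \to b})$ via Definition \ref{V-Functor Definition}, and then pushes $\Theta$ through using its ordinary naturality at the counit, its monoidality, interchange, and the 2-cell coherence $r_v \circ \Theta_{\mathcal F_A(v)} = s_v$, exactly as the paper does. Your observation that condition \ref{vtrans mon} is literally ordinary monoidality of $\Theta$ on $1_{\mathcal V}$-graded morphisms is the same content the paper establishes via identity mates together with Lemmas \ref{mate of fgcirc} and \ref{mate of fgtens}, just stated more directly.
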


\begin{proof}
  First, naturality: We need to show that

  \begin{align*}
    \begin{tikzpicture}[baseline=35, smallstring]
      \node (bot) at (0,0) {$\mathcal A \E a \to b \R$};
      \node (top) at (1,3) {$\mathcal B \E R \E a \R \to S \E b \R \R$};
      \node[draw, rectangle] (r) at (0,1) {$\mathcal R_{a \to b}$};
      \node[draw, rectangle] (theta) at (2,1) {$\theta_b$};
      \node[draw, rectangle] (circ) at (1,2) {$\blank \circ_{\mathcal B} \blank$};
      \draw (bot) to (r);
      \draw (r) to[in=-90,out=90] (circ.225);
      \draw (theta) to[in=-90,out=90] (circ.-45);
      \draw (circ) to (top);
    \end{tikzpicture}
    =
    \begin{tikzpicture}[baseline=35, smallstring]
      \node (bot) at (2,0) {$\mathcal A \E a \to b \R$};
      \node (top) at (1,3) {$\mathcal B \E R \E a \R \to S \E b \R \R$};
      \node[draw, rectangle] (r) at (2,1) {$\mathcal S_{a \to b}$};
      \node[draw, rectangle] (theta) at (0,1) {$\theta_a$};
      \node[draw, rectangle] (circ) at (1,2) {$\blank \circ_{\mathcal B} \blank$};
      \draw (bot) to (r);
      \draw (r) to[in=-90,out=90] (circ.-45);
      \draw (theta) to[in=-90,out=90] (circ.225);
      \draw (circ) to (top);
    \end{tikzpicture}    
  \end{align*}

  By lemma \ref{mate of fgcirc} and expanding, the mate of the left hand side is 
  \begin{align*}
    \begin{tikzpicture}[baseline=40, smallstring]
      \node (ra) at (0,0) {$R \E a \R$};
      \node (bot) at (2,0) {$\mathcal F_B \E \mathcal A \E a \to b \R \R$};
      \node (top) at (1,4) {$S \E b \R$};
      \node[draw, rectangle] (r) at (1,2) {$\mate \E \mathcal R_{a \to b} \R$};
      \node[draw, rectangle] (theta) at (1,3) {$\mate \E \theta_b \R$};
      \draw (ra) to[in=-90,out=90] (r.225);
      \draw (bot) to[in=-90,out=90] (r.-45);
      \draw (r) to[in=-90,out=90] (theta);
      \draw (theta) to (top);
    \end{tikzpicture}
    =
    \begin{tikzpicture}[baseline=45, smallstring]
      \node (ra) at (0,0) {$R \E a \R$};
      \node (bot) at (2,0) {$\mathcal F_B \E \mathcal A \E a \to b \R \R$};
      \node (top) at (1,5) {$S \E b \R$};
      \node[draw, rectangle] (r) at (2,1) {$r_{\mathcal A \E a \to b \R}$};
      \node[draw, rectangle] (rho) at (1,2) {$\rho_{a , \mathcal F_A \E \mathcal A \E a \to b \R \R}$};
      \node[draw, rectangle] (rep) at (1,3) {$R \E \epsilon_{a \to b}^{\mathcal F_A} \R$};
      \node[draw, rectangle] (theta) at (1,4) {$\Theta_b$};
      \draw (ra) to[in=-90,out=90] (rho.225);
      \draw (bot) to[in=-90,out=90] (r);
      \draw (r) to[in=-90,out=90] (rho.-45);
      \draw (rho) to[in=-90,out=90] (rep);
      \draw (rep) to (theta);
      \draw (theta) to (top);
    \end{tikzpicture}
    =
    \begin{tikzpicture}[baseline=55, smallstring]
      \node (ra) at (0,0) {$R \E a \R$};
      \node (bot) at (2,0) {$\mathcal F_B \E \mathcal A \E a \to b \R \R$};
      \node (top) at (1,5) {$S \E b \R$};
      \node[draw, rectangle] (r) at (2,1) {$r_{\mathcal A \E a \to b \R}$};
      \node[draw, rectangle] (eta) at (0,2) {$\Theta_a$};
      \node[draw, rectangle] (etf) at (2,2) {$\Theta_{\mathcal F_A \E \mathcal A \E a \to b \R \R}$};
      \node[draw, rectangle] (sigma) at (1,3) {$\sigma_{a , \mathcal F_A \E \mathcal A \E a \to b \R \R}$};
      \node[draw, rectangle] (sep) at (1,4) {$S \E \epsilon_{a \to b}^{\mathcal F_A} \R$};
      \draw (ra) to[in=-90,out=90] (eta);
      \draw (bot) to[in=-90,out=90] (r);
      \draw (r) to (etf);
      \draw (etf) to[in=-90,out=90] (sigma.-45);
      \draw (eta) to[in=-90,out=90] (sigma.225);
      \draw (sigma) to[in=-90,out=90] (sep);
      \draw (sep) to (top);
    \end{tikzpicture}
  \end{align*}
  where the last equality uses naturality and monoidality of $\Theta$. Now appealing to the coherence on $r$ and $s$ and recognizing the mate of $\mathcal S_{a \to b}$, this becomes
  \begin{align*}
    \begin{tikzpicture}[baseline=40, smallstring]
      \node (ra) at (0,0) {$R \E a \R$};
      \node (bot) at (2,0) {$\mathcal F_B \E \mathcal A \E a \to b \R \R$};
      \node (top) at (1,4) {$S \E b \R$};
      \node[draw, rectangle] (s) at (2,1) {$s_{\mathcal A \E a \to b \R}$};
      \node[draw, rectangle] (eta) at (0,1) {$\Theta_a$};
      \node[draw, rectangle] (sigma) at (1,2) {$\sigma_{a , \mathcal F_A \E \mathcal A \E a \to b \R \R}$};
      \node[draw, rectangle] (sep) at (1,3) {$S \E \epsilon_{a \to b}^{\mathcal F_A} \R$};
      \draw (ra) to[in=-90,out=90] (eta);
      \draw (bot) to[in=-90,out=90] (s);
      \draw (s) to[in=-90,out=90] (sigma.-45);
      \draw (eta) to[in=-90,out=90] (sigma.225);
      \draw (sigma) to[in=-90,out=90] (sep);
      \draw (sep) to (top);
    \end{tikzpicture}
    =
    \begin{tikzpicture}[baseline=30, smallstring]
      \node (ra) at (0,0) {$R \E a \R$};
      \node (bot) at (2,0) {$\mathcal F_B \E \mathcal A \E a \to b \R \R$};
      \node (top) at (1,3) {$S \E b \R$};
      \node[draw, rectangle] (eta) at (0,1) {$\Theta_a$};
      \node[draw, rectangle] (sab) at (1,2) {$\mate \E \mathcal S_{a \to b} \R$};
      \draw (ra) to[in=-90,out=90] (eta);
      \draw (bot) to[in=-90,out=90] (sab.-45);
      \draw (eta) to[in=-90,out=90] (sab.225);
      \draw (sab) to (top);
    \end{tikzpicture}
    =
    \begin{tikzpicture}[baseline=30, smallstring]
      \node (ra) at (0,0) {$R \E a \R$};
      \node (bot) at (2,0) {$\mathcal F_B \E \mathcal A \E a \to b \R \R$};
      \node (top) at (1,3) {$S \E b \R$};
      \node[draw, rectangle] (eta) at (0,1) {$\mate \E \theta_a \R$};
      \node[draw, rectangle] (sab) at (1,2) {$\mate \E \mathcal S_{a \to b} \R$};
      \draw (ra) to[in=-90,out=90] (eta);
      \draw (bot) to[in=-90,out=90] (sab.-45);
      \draw (eta) to[in=-90,out=90] (sab.225);
      \draw (sab) to (top);
    \end{tikzpicture}
  \end{align*}
  which is exactly the mate of the right hand side. To show monoidality, we need to check that

  \begin{align*}
    \begin{tikzpicture}[baseline=15, smallstring]
      \node (top) at (1,2) {$\mathcal B \E R \E a \R R \E b \R \to S \E a b \R \R$};
      \node[draw, rectangle] (sigma) at (0,0) {$\rho_{a , b}^R$};
      \node[draw, rectangle] (theta) at (2,0) {$\theta_{a b}$};
      \node[draw, rectangle] (circ) at (1,1) {$\blank \circ_{\mathcal B} \blank$};
      \draw (sigma) to[in=-90,out=90] (circ.225);
      \draw (theta) to[in=-90,out=90] (circ.-45);
      \draw (circ) to (top);
    \end{tikzpicture}
    =
    \begin{tikzpicture}[baseline=25, smallstring]
      \node (top) at (2,3) {$\mathcal B \E R \E a \R R \E b \R \to S \E a b \R \R$};
      \node[draw, rectangle] (eta) at (0,0) {$\theta_a$};
      \node[draw, rectangle] (etb) at (2,0) {$\theta_b$};
      \node[draw, rectangle] (tens) at (1,1) {$\blank \otimes_{\mathcal B} \blank$};
      \node[draw, rectangle] (sigma) at (3,1) {$\sigma_{a , b}^S$};
      \node[draw, rectangle] (circ) at (2,2) {$\blank \circ_{\mathcal B} \blank$};
      \draw (eta) to[in=-90,out=90] (tens.225);
      \draw (etb) to[in=-90,out=90] (tens.-45);
      \draw (tens) to[in=-90,out=90] (circ.225);
      \draw (sigma) to[in=-90,out=90] (circ.-45);
      \draw (circ) to (top);
    \end{tikzpicture}
  \end{align*}

  The mate of the left hand side, after using lemma \ref{mate of fgcirc}, monoidality of $\Theta$, and lemma \ref{mate of fgtens}, becomes 
  \begin{align*}
    \begin{tikzpicture}[baseline=30, smallstring]
      \node (bot) at (0,0) {$R \E a \R R \E b \R$};
      \node (top) at (0,3) {$S \E a b \R$};
      \node[draw, rectangle] (sigma) at (0,1) {$\mate \E \rho_{a , b}^R \R$};
      \node[draw, rectangle] (theta) at (0,2) {$\mate \E \theta_{a b} \R$};
      \draw (bot) to[in=-90,out=90] (sigma);
      \draw (sigma) to (theta);
      \draw (theta) to (top);
    \end{tikzpicture}
    =
    \begin{tikzpicture}[baseline=30, smallstring]
      \node (bot) at (0,0) {$R \E a \R R \E b \R$};
      \node (top) at (0,3) {$S \E a b \R$};
      \node[draw, rectangle] (rho) at (0,1) {$\rho_{a , b}$};
      \node[draw, rectangle] (theta) at (0,2) {$\Theta_{a b}$};
      \draw (bot) to[in=-90,out=90] (rho);
      \draw (rho) to (theta);
      \draw (theta) to (top);
    \end{tikzpicture}
    =
    \begin{tikzpicture}[baseline=25, smallstring]
      \node (ra) at (0,0) {$R \E a \R$};
      \node (rb) at (2,0) {$R \E b \R$};
      \node (top) at (1,3) {$S \E a b \R$};
      \node[draw, rectangle] (eta) at (0,1) {$\Theta_a$};
      \node[draw, rectangle] (etb) at (2,1) {$\Theta_b$};
      \node[draw, rectangle] (sigma) at (1,2) {$\sigma_{a , b}$};y
      \draw (ra) to (eta);
      \draw (rb) to (etb);
      \draw (eta) to[in=-90,out=90] (sigma.225);
      \draw (etb) to[in=-90,out=90] (sigma.-45);
      \draw (sigma) to (top);
    \end{tikzpicture}
    =
    \begin{tikzpicture}[baseline=45, smallstring]
      \node (bot) at (3,0) {$R \E a \R R \E b \R$};
      \node (top) at (3,4.5) {$S \E a b \R$};
      \node[draw, rectangle] (eta) at (2,1) {$\theta_a$};
      \node[draw, rectangle] (etb) at (4,1) {$\theta_b$};
      \node[draw, rectangle] (tens) at (3,2) {$\blank \otimes_{\mathcal B} \blank$};
      \node (empty) at (3,1) {};
      \node (empty2) at (3,2.5) {};
      \draw ($(empty)+(-1.5,-0.5)$) rectangle ($(empty2)+(1.5,0)$);
      \node at (1,1.5) {$\mate$};
      \node[draw, rectangle] (sigma) at (3,3.5) {$\mate \E \sigma_{a , b}^S \R$};
      \draw (eta) to[in=-90,out=90] (tens.225);
      \draw (etb) to[in=-90,out=90] (tens.-45);
      \draw (tens) to[in=-90,out=90] (empty2);
      \draw (bot) to ($(empty)+(0,-0.5)$);
      \draw (empty2) to (sigma);
      \draw (sigma) to (top);
    \end{tikzpicture}
  \end{align*}
  which is exactly the mate of the right hand side under lemma \ref{mate of fgcirc}.
\end{proof}

Thus we have proven

\begin{prop}
  The 2-functor $P$ is fully faithful on 2-cells.
\end{prop}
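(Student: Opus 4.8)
The plan is to show that for each pair of 0-cells $\mathcal A, \mathcal B \in \vmoncat$ and each pair of 1-cells $\E \mathcal R, \rho^R \R, \E \mathcal S, \sigma^S \R \in \vmoncat \E \mathcal A \to \mathcal B \R$, the functor $P_{\mathcal A \to \mathcal B}$ induces a bijection between the $1_{\mathcal V}$-graded monoidal natural transformations $\theta : \mathcal R \Rightarrow \mathcal S$ and the monoidal natural transformations $\Theta : P \E \mathcal R \R \Rightarrow P \E \mathcal S \R$ that are 2-cells in $\vmodtens$; this is exactly the statement that $P_{\mathcal A \to \mathcal B}$ is fully faithful as a functor on its hom-categories. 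Recall that on 2-cells $P_{\mathcal A \to \mathcal B}$ is given componentwise by $\Theta_a := \mate \E \theta_a \R$ under the identity adjunction $\mathcal V \E 1_{\mathcal V} \to \mathcal B \E \mathcal R \E a \R \to \mathcal S \E a \R \R \R = \mathcal B \E \mathcal R \E a \R \to \mathcal S \E a \R \R$.

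First I would exhibit a candidate inverse, namely the assignment $\Theta \mapsto \theta$ with $\theta_a := \mate \E \Theta_a \R$ under the same identity adjunction, which is precisely the construction in the definition preceding this proposition. The two immediately preceding propositions supply exactly the well-definedness of these two maps: the proposition in the ``Map on 2-cells'' subsection shows that if $\theta$ is a $1_{\mathcal V}$-graded monoidal natural transformation then $\Theta$ is a 2-cell of $\vmodtens$, while the proposition just above shows that if $\Theta$ is a monoidal natural transformation satisfying $r_v \circ \Theta_{\mathcal F_A \E v \R} = s_v$ then $\theta$ is a $\mathcal V$-monoidal natural transformation. Hence both $\theta \mapsto \Theta$ and $\Theta \mapsto \theta$ are honest maps between the corresponding sets of 2-cells.

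Next I would observe that these two maps are mutually inverse. The adjunction in play is the identity (equality) adjunction of Adjunction \ref{Main Adjunction} at $v = 1_{\mathcal V}$, under which the mate of a morphism is that same morphism, merely reread as an element of $\mathcal V \E 1_{\mathcal V} \to \mathcal B \E \mathcal R \E a \R \to \mathcal S \E a \R \R \R$ rather than of $\mathcal B \E \mathcal R \E a \R \to \mathcal S \E a \R \R$; in particular taking the mate twice returns the original component. Thus $\mate \E \mate \E \theta_a \R \R = \theta_a$ and $\mate \E \mate \E \Theta_a \R \R = \Theta_a$ for every object $a$, so the two assignments compose to the identity in both orders and $P_{\mathcal A \to \mathcal B}$ is a bijection on 2-cells.

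The only genuine content — and so the main obstacle — lies in the two preceding propositions: that the enriched axioms (naturality and monoidality of $\theta$) and the unenriched axioms (naturality and monoidality of $\Theta$ together with the action-coherence compatibility with $r$ and $s$) correspond to one another under the identity mate. Granting those, bijectivity is formal because the identity adjunction makes the mate a tautological relabeling. Finally, combining this full faithfulness on 2-cells with the essential surjectivity of $P_{\mathcal A \to \mathcal B}$ on 1-cells established earlier, and with the fact from the 2-functor proof that $P_{\mathcal A \to \mathcal B}$ is indeed a functor, shows that each $P_{\mathcal A \to \mathcal B}$ is an equivalence of categories, which is the fully faithful condition in the definition of a 2-equivalence.
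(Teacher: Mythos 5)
Your proposal is correct and takes essentially the same route as the paper: the paper's proof of this proposition consists precisely of the preceding definition and proposition (constructing $\Theta \mapsto \theta$ via mates under the identity adjunction and checking it lands in $\mathcal V$-monoidal natural transformations), combined with the earlier ``Map on 2-cells'' proposition for the other direction, after which bijectivity is the tautological observation that mates under the equality adjunction are relabelings of components. Nothing is missing from your argument.
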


In turn, this finishes the proof of our main theorem:

\begin{thm} \label{Main Theorem}
  The 2-functor $P : \vmoncat \to \vmodtens$ is a 2-equivalence.
\end{thm}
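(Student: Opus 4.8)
The plan is to verify the two conditions in the definition of a 2-equivalence directly, since all the substantive work has already been distributed across the preceding propositions; the theorem itself is an assembly of those results. First I would recall that $P$ is a 2-equivalence precisely when it is essentially surjective on 0-cells and when each hom-functor $P_{\mathcal A \to \mathcal B} : \vmoncat \E \mathcal A \to \mathcal B \R \to \vmodtens \E P \E \mathcal A \R \to P \E \mathcal B \R \R$ is an equivalence of categories.

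For the first condition, essential surjectivity on 0-cells is exactly the content of the earlier proposition identifying the map on 0-cells with the bijective correspondence of Theorem \ref{Main MP Theorem}, so nothing further is required there. For the second condition, I would show that each $P_{\mathcal A \to \mathcal B}$ is both essentially surjective and fully faithful as an ordinary functor between the hom-categories. Essential surjectivity on the objects of these hom-categories, namely on 1-cells, follows from the explicit construction of the inverse assignment $\E R , \rho , r \R \mapsto \E \mathcal R , \rho^R \R$ together with the proposition showing that this assignment is an essential inverse to $P_{\mathcal A \to \mathcal B}$ on 1-cells; indeed that proposition shows the composite recovers $\E R , \rho , r \R$ on the nose after taking mates twice, which in particular exhibits every 1-cell as (isomorphic to) an image. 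Full faithfulness on the morphisms of these hom-categories, namely on 2-cells, is exactly the final proposition that $P$ is fully faithful on 2-cells, which furnishes the required bijection between $\mathcal V$-monoidal natural transformations $\theta$ and monoidal natural transformations $\Theta$ compatible with the action-coherence data.

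Combining these, each $P_{\mathcal A \to \mathcal B}$ is essentially surjective and fully faithful, hence an equivalence of categories, and together with essential surjectivity on 0-cells this is precisely the definition of a 2-equivalence. The main obstacle of the entire development was never this closing assembly but rather the construction of the inverse on 1-cells and the verification that it respects the half-braiding coherence and action-coherence conditions, all of which was carried out above; the only point requiring care here is to read the levelwise bijection on 2-cells as full faithfulness of the hom-functor $P_{\mathcal A \to \mathcal B}$, so that it combines with essential surjectivity on 1-cells into a genuine equivalence of hom-categories rather than a mere correspondence of sets.
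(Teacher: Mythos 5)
Your proposal is correct and mirrors the paper's own proof exactly: the paper likewise treats Theorem \ref{Main Theorem} as the assembly of the proposition on essential surjectivity on 0-cells (via \cite{MR3961709}), the proposition that the explicit construction $\E R , \rho , r \R \mapsto \E \mathcal R , \rho^R \R$ is an essential inverse to $P_{\mathcal A \to \mathcal B}$ on 1-cells, and the proposition that $P$ is fully faithful on 2-cells. Your closing remark—that the levelwise bijection on 2-cells together with essential surjectivity on 1-cells is what makes each hom-functor $P_{\mathcal A \to \mathcal B}$ an equivalence of categories, as required by the paper's definition of 2-equivalence—is precisely the reading the paper intends.
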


\begin{remark}
  If we instead demand that all of our lax functors are in fact strong, i.e., we restrict to the subcategories where 1-cells are only the strong monoidal and strong $\mathcal V$-monoidal functors, then we get a 2-equivalence by the same proof.  Showing that a lax monoidal functor is in fact strong is simply a check that the laxitors are invertible, and this translates directly through $P$, since $P$ is the identity on each of the laxitors.
\end{remark}



\section{$G$-gradings} \label{G-gradings}

For the remainder of the article, fix a finite group $G$ and assume that $\mathcal V$ is linear.


\begin{defn}
  We say that a $\mathcal V$-category $\mathscr C$ is \emph{additive} if its underlying category is additive.  We say a $\mathcal V$-functor between additive $\mathcal V$-categories is additive if its underlying functor is additive.
\end{defn}

\begin{defn}
  Let $\mathcal A$ and $\mathcal B$ be $\mathcal V$-categories.  We define the \emph{external direct sum} $\mathcal A \oplus \mathcal B$ to be the $\mathcal V$-category with objects $a \oplus b$ for $a \in \mathcal A$ and $b \in \mathcal B$, and hom objects given by $\E \mathcal A \oplus \mathcal B \R \E a_1 \oplus b_1 \to a_2 \oplus b_2 \R \coloneqq \mathcal A \E a_1 \to a_2 \R \oplus \mathcal B \E b_1 \to b_2 \R$, using the direct sum on objects of $\mathcal V$.
\end{defn}

\begin{defn}
  A \emph{$G$-graded $\mathcal V$-monoidal category} is a $\mathcal V$-monoidal category $\mathscr C$ along with a decomposition $\mathscr C = \bigoplus_{g \in G} \mathscr C_g$, such that for each $c_g \in \mathscr C_g$ and $c_h \in \mathscr C_h$, we have $c_g c_h \in \mathscr C_{gh}$.  We say that the $G$-grading is \emph{faithful} if for each $g \in G$ there exist $a_g , b_g \in \mathscr C_g$ such that $\mathscr C_g ( a_g \to b_g ) \neq 0_{\mathcal V}$. If $\mathscr C$ and $\mathscr D$ are $G$-graded $\mathcal V$-monoidal categories, a $\mathcal V$-monoidal functor $( \mathcal F , \nu ) : \mathscr C \to \mathscr D$ is called $G$-graded if the underlying functor $F$ is $G$-graded, i.e., $F$ is additive and for every $g \in G$ and every $c_g \in C_g$ we have $F ( c_g ) \in \mathscr D_g$. We say that a $G$-graded $\mathcal V$-monoidal functor $( \mathcal F , \nu )$ is an \emph{equivalence} if it is a $\mathcal V$-monoidal equivalence of $\mathcal V$-monoidal categories.  Note that in this case, $\mathcal F \restrict_{\mathscr C_g} : \mathscr C_g \to \mathscr D_g$ is a $\mathcal V$-equivalence for all $g \in G$, and is a $\mathcal V$-monoidal equivalence when $g = e$.
\end{defn}

\begin{defn} (\cite{1910.03178})
  A $G$-graded $\mathcal V$-module tensor category consists of a linear monoidal category $C$, a decomposition $C = \bigoplus_{g \in G} C_g$ into linear categories $C_g$, and a braided strong monoidal functor $\mathcal F_C : \mathcal V \to Z ( C )$ such that $\mathcal F_C^Z ( \mathcal V ) \subseteq \Rep ( G )' \subseteq Z ( C )$ (equivalently, $\mathcal F_C ( \mathcal V ) \subseteq C_e$ \cite[Lemma 2.11]{1910.03178}). Given $( C, \mathcal F_C )$ and $( D , \mathcal F_D )$ $G$-graded tensored $\mathcal V$-module tensor categories, a morphism $( R , \rho , r ) : C \to D$ is called $G$-graded if $R$ is additive and for $g \in G$ and $c_g \in C_g$, $R ( c_g ) \in D_g$.  We say that $( R , \rho , r )$ is an \emph{equivalence} if $( R , \rho )$ is a strong monoidal equivalence and $r$ is a natural isomorphism.
\end{defn}


\subsection{$G$-graded correspondence} \label{G-graded correspondence}

On the way to a $G$-graded version of Theorem \ref{Main theorem}, we define $G$-graded analogues to $\vmoncat$ and $\vmodtens$ and construct a correspondence on 0-cells.

\begin{defn}
  Define the 2-category $\ggrvmoncat$ as follows:
  \begin{itemize}
  \item 0-cells are $G$-graded (weakly) tensored $\mathcal V$-monoidal categories
  \item 1-cells are $G$-graded $\mathcal V$-monoidal functors
  \item 2-cells are $\mathcal V$-monoidal natural transformations.    
  \end{itemize}
  The same proofs as in Section \ref{Define bicategories} show that $\ggrvmoncat$ is again a 2-category under the same compositions.  Similarly, we define the 2-category $\ggrvmodtens$ via
  \begin{itemize}
  \item 0-cells are $G$-graded (weakly) tensored $\mathcal V$-module tensor categories
  \item 1-cells are $G$-graded monoidal functors with action-coherence natural transformations
  \item 2-cells are monoidal natural transformations satisfying the coherence in \ref{Define bicategories}    
  \end{itemize}
\end{defn}

Working towards extending Theorem \ref{Main theorem}, we first extend Theorem \ref{Main MP Theorem} for the 0-cells:
\begin{thm} \label{thm:G-graded correspondence}
  There is a bijective correspondence between equivalence classes:
  \[
  \left\{\,
  \parbox{7cm}{\rm Faithfully $G$-graded (weakly) tensored rigid $\mathcal V$-monoidal categories
    $\mathscr C = \bigoplus_{g \in G} \mathscr C_g$
  }\,\right\}
  \,\,\cong\,\,
  \left\{\,\parbox{7.5cm}{\rm
    Faithfully $G$-graded
    (weakly) tensored rigid $\mathcal V$-module tensor categories
    $(C = \bigoplus_{g \in G} C_g,\mathcal F_C^{\scriptscriptstyle Z})$
  }\,\right\}.
  \]
\end{thm}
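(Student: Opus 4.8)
The plan is to bootstrap from the ungraded correspondence of Theorem \ref{Main MP Theorem}: the assignment $\mathscr C \mapsto (C,\mathcal F_C^Z)$ and its inverse are already mutually inverse bijections between (weakly) tensored rigid $\mathcal V$-monoidal categories and (weakly) tensored rigid $\mathcal V$-module tensor categories, so the only new content is to verify that a faithful $G$-grading is transported compatibly in both directions and that the two grading operations are mutually inverse. All of these arguments will be uniform across the weakly tensored and tensored cases, since the grading never interacts with the adjoint or strong-monoidal conditions that distinguish them.

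\textbf{Forward direction.} Given a faithfully $G$-graded $\mathscr C = \bigoplus_g \mathscr C_g$, I would grade the underlying category by $C_g \coloneqq (\mathscr C_g)^{\mathcal V}$; since the objects and tensor product of $C$ agree with those of $\mathscr C$, the condition $c_g c_h \in C_{gh}$ is immediate and additivity is inherited. The key point is that $\mathcal F_C(\mathcal V) \subseteq C_e$. Because $1_{\mathscr C} \in \mathscr C_e$, orthogonality of the graded components gives $\mathscr C(1_{\mathscr C} \to x) = 0_{\mathcal V}$ for every $x \in C_h$ with $h \neq e$; hence, by the defining adjunction $C(\mathcal F_C(v) \to x) \cong \mathcal V(v \to \mathscr C(1_{\mathscr C} \to x))$, each component $\mathcal F_C(v)_h$ with $h \neq e$ admits no maps into any object of $C_h$ and so vanishes. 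Thus $\mathcal F_C(v) \in C_e$, which by \cite[Lemma 2.11]{1910.03178} is equivalent to $\mathcal F_C^Z(\mathcal V) \subseteq \Rep(G)'$. Faithfulness transfers because $C(a \to b) = \mathcal V(1_{\mathcal V} \to \mathscr C(a \to b))$, so a nonzero hom in $C_g$ forces $\mathscr C_g(a \to b) \neq 0_{\mathcal V}$.

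\textbf{Reverse direction.} Starting from a faithfully $G$-graded $(C = \bigoplus_g C_g, \mathcal F_C^Z)$, the reconstructed $\mathscr C$ shares its objects and tensor product with $C$, so I would set $\mathscr C_g$ to be the full $\mathcal V$-subcategory on the objects of $C_g$; tensor compatibility and the object-level decomposition are then immediate. To confirm this is a genuine grading I must check that the homs between distinct components vanish: using Adjunction \ref{Main Adjunction}, $\mathcal V(v \to \mathscr C(c_g \to c_h)) \cong C(c_g \mathcal F_C(v) \to c_h)$, and since $\mathcal F_C(v) \in C_e$ we have $c_g \mathcal F_C(v) \in C_g$, so the right-hand side vanishes for $g \neq h$; as this holds for all $v$, the Yoneda lemma forces $\mathscr C(c_g \to c_h) \cong 0_{\mathcal V}$. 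Faithfulness transfers via the same hom-set identity as above.

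\textbf{Mutual inverse and equivalence classes.} The operations ``underlying category of each component'' and ``full subcategory on the objects of each component'' are inverse to each other and refine the ungraded mutual-inverse statement of Theorem \ref{Main MP Theorem}, so the two graded constructions remain mutually inverse. To descend to equivalence classes I would observe that a $G$-graded $\mathcal V$-monoidal equivalence restricts to $\mathcal V$-equivalences on each graded component and corresponds, under the $2$-functor $P$, to a $G$-graded module-tensor equivalence $(R,\rho,r)$ with $r$ an isomorphism, and conversely; since $P$ sends graded functors to graded functors and preserves the property of being an equivalence, the bijection passes to equivalence classes. The main obstacle is the degree computation $\mathcal F_C(\mathcal V) \subseteq C_e$ together with its Yoneda converse establishing hom-orthogonality of the reconstructed components; everything else is bookkeeping inherited from the ungraded theorem.
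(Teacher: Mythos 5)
Your proposal is correct and follows essentially the same route as the paper's proof: the identical adjunction argument (homs out of $1_{\mathscr C}$ into non-identity components vanish) gives $\mathcal F_C(\mathcal V) \subseteq C_e$ in the forward direction, and the identical observation that $c_g \mathcal F_C(v) \in C_g$ combined with Yoneda gives hom-orthogonality of the reconstructed components in the reverse direction. The only difference is cosmetic: where you route the equivalence-class bookkeeping through the 2-functor $P$ and transport of graded equivalences, the paper instead invokes the round-trip equivalences constructed in sections 6 and 7 of Morrison--Penneys and notes they are the identity on objects, hence automatically $G$-graded.
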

\begin{proof}
  In the following, we revert to notation of \cite{MR3961709} for clarity, with the underlying category of $\mathscr C$ denoted by $\mathscr C^{\mathcal V}$, and the $\mathcal V$-monoidal category associated to a $\mathcal V$-module tensor category $\E C , \mathcal F_C \R$ being denoted by $C_{\sslash \mathcal F}$. We use this to check that our abuse of notation is valid up to equivalence - here a $G$-graded monoidal equivalence.  We will continue to use ordinary font in subscripts and superscripts to specify relation to the underlying category, e.g., $C$ refers to $\mathscr C^{\mathcal V}$. We first show that the correspondence in Theorem \ref{Main MP Theorem} can be extended to preserve $G$-gradings.
  
  Suppose we start with a (weakly) tensored $G$-graded $\mathcal V$-monoidal category $\mathscr C = \bigoplus_{g \in G} \mathscr C_G$. We show first that $\mathcal F_C \E \mathcal V \R \subseteq \mathscr C_e^{\mathcal V}$. Fix $b \in \mathscr C_g$ for some nonidentity $g \in G$; we know $\mathscr C \E 1_{\mathscr C} \to b \R \cong 0_{\mathcal V}$ since $\mathscr C$ is $G$-graded (and $1_{\mathscr C} \in \mathscr C_e$), so for all $v \in V$,
  \begin{align*}
    0_{\mathrm{\Vec}} \cong \mathcal V \E v \to \mathscr C \E 1_{\mathscr C} \to b \R \R \cong \mathscr C^{\mathcal V} \E \mathcal F_C \E v \R \to b \R.
  \end{align*}
  Thus $\mathscr C^{\mathcal V} \E \mathcal F_C \E v \R \to b \R \cong 0_{\mathrm{\Vec}}$ for all $b$ not in $\mathscr C^{\mathcal V}_e$, so $\mathcal F_C \E \mathcal V \R \subseteq \mathscr C_e$. We also have, with all direct sums over $g \in G$,
  \begin{align*}
    \mathscr C^{\mathcal V} \E \bigoplus a_g \to \bigoplus b_g \R%
    &= \mathcal V \E 1_{\mathcal V} \to \mathscr C \E \bigoplus a_g \to \bigoplus b_g \R \R\\
    &= \mathcal V \E 1_{\mathcal V} \to \bigoplus \mathscr C_g \E a_g \to b_g \R \R\\
    &\cong \bigoplus \mathcal V \E 1_{\mathcal V} \to \mathscr C_g \E a_g \to b_g \R \R\\
    &= \bigoplus \mathscr C_g^{\mathcal V} \E a_g \to b_g \R,
  \end{align*}
  so $\mathscr C^{\mathcal V}$ is equivalent to $\oplus_g \mathscr C_g^{\mathcal V}$, and thus $( \mathscr C^{\mathcal V} , \mathcal F_C )$ is a (weakly) tensored $G$-graded $\mathcal V$-module tensor category.

  Now suppose we have a $G$-graded $\mathcal V$-module tensor category $( C = \bigoplus C_g , \mathcal F_C )$. Then we get a $\mathcal V$-monoidal category $C_{\sslash \mathcal F}$ by \ref{Map on 0-cells}.  Further, $C \cong \bigoplus C_g$ as $\mathcal V$-module categories with the actions $( v , c_g ) \mapsto c_g \mathcal F_C ( v )$; this is valid because $\mathcal F_C \E v \R \subseteq C_e$, so $c_g \mathcal F_C \E v \R$ is again in $C_g$. Since every element of $\mathscr C$ can be written as a direct sum of objects from the $\mathscr C_g$, we have the following natural isomorphisms:
  \begin{align*}
    \mathcal V \E v \to \mathscr C \E \bigoplus a_g \to \bigoplus b_g \R \R%
    &\cong \mathscr C^{\mathcal V} \E \E \bigoplus a_g \R \mathcal F_C \E v \R \to \bigoplus b_g \R\\
    &\cong \mathscr C^{\mathcal V} \E \E \bigoplus a_g \mathcal F_C \E v \R \R \to \bigoplus b_g \R\\
    &\cong \bigoplus \mathscr C_g^{\mathcal V} \E \E a_g \mathcal F_C \E v \R \R \to b_g \R\\
    &\cong \bigoplus \mathcal V \E v \to \mathscr C_g \E a_g \to b_g \R \R\\
    &\cong \mathcal V \E v \to \bigoplus \mathscr C_g \E a_g \to b_g \R \R\\
    &= \mathcal V \E v \to \E \bigoplus \mathscr C_g \R \E \bigoplus a_g \to \bigoplus b_g \R \R.
  \end{align*}
  Now by the Yoneda Lemma we see that $\mathscr C$ is equivalent to $\bigoplus \mathscr C_g$ as $\mathcal V$-monoidal categories, so $\mathscr C$ is a $G$-graded $\mathcal V$-monoidal category.
  
  To finish the proof, we see that in \cite[sections 6 and 7]{MR3961709} the authors construct a strong monoidal equivalence of categories $C \cong C_{\sslash \mathcal F}^{\mathcal V}$, and extend it to an equivalence of $\mathcal V$-module tensor categories. They analogously construct an equivalence of the $\mathcal V$-monoidal categories $\mathscr C \cong \mathscr C_{\sslash \mathcal F}^{\mathcal V}$. Both equivalences are the identity on objects, so both are $G$-graded.
\end{proof}
\begin{thm} \label{G-graded equivalence}
  Theorem \ref{Main theorem} extends to an equivalence of 2-categories between $\ggrvmoncat$ and $\ggrvmodtens$.
\end{thm}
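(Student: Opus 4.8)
The plan is to show that the 2-functor $P$ of Theorem \ref{Main Theorem} restricts to a 2-equivalence between the $G$-graded sub-2-categories $\ggrvmoncat$ and $\ggrvmodtens$. The essential observation is that $P$ is, up to the identifications of Section \ref{Define 2-functor}, the identity on objects: on a 0-cell it sends $\mathcal A$ to $\E A , \mathcal F_A^Z \R$ with $A = \mathcal A^{\mathcal V}$ carrying the same objects as $\mathcal A$, and on a 1-cell it sends $\E \mathcal R , \rho^R \R$ to $\E R , \rho , r \R$ with $R = \mathcal R^{\mathcal V}$ the underlying functor, which agrees with $\mathcal R$ on objects. Since both the grading decomposition of a 0-cell and the grading-preservation condition on a 1-cell are phrased purely in terms of the underlying category and its objects, $P$ will carry $G$-graded data to $G$-graded data verbatim.

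First I would treat 0-cells. The forward direction of Theorem \ref{thm:G-graded correspondence} shows that if $\mathscr C$ is a faithfully $G$-graded (weakly) tensored rigid $\mathcal V$-monoidal category then $P \E \mathscr C \R = \E C , \mathcal F_C^Z \R$ is again faithfully $G$-graded, so $P$ does restrict to a map of 0-cells; essential surjectivity on 0-cells is the reverse direction of the same theorem, together with the fact that the equivalences produced there are the identity on objects and hence $G$-graded.

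Next I would treat the hom-categories. A $\mathcal V$-monoidal functor $\E \mathcal R , \rho^R \R$ is $G$-graded exactly when its underlying functor $R = \mathcal R^{\mathcal V}$ is additive and grading-preserving, which is precisely the condition defining a $G$-graded 1-cell $\E R , \rho , r \R$ of $\ggrvmodtens$. Thus $P_{\mathscr C \to \mathscr D}$ sends $G$-graded 1-cells to $G$-graded 1-cells, and conversely the essential-inverse construction used in the proof of Theorem \ref{Main Theorem} builds from $\E R , \rho , r \R$ a $\mathcal V$-monoidal functor $\mathcal R$ with $\mathcal R \E a \R = R \E a \R$ on objects and underlying functor $R$, so it too respects the grading. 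As the 2-cells in both $\ggrvmoncat$ and $\ggrvmodtens$ carry no further grading hypothesis, each $G$-graded hom-category is the full subcategory of the corresponding ungraded hom-category spanned by the $G$-graded 1-cells. The restriction of the equivalence $P_{\mathscr C \to \mathscr D}$ of Theorem \ref{Main Theorem} to these full subcategories is therefore again an equivalence: it is fully faithful because full faithfulness is inherited by full subcategories, and essentially surjective by the two observations above.

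I expect the only genuine content, as opposed to bookkeeping, to be confirming that the $G$-grading conditions on 1-cells correspond exactly under $P$ and its essential inverse, that is, that being $G$-graded is a property detected entirely by the underlying functor on objects. Once this is granted, the statement follows formally from the principle that a 2-equivalence restricts to a 2-equivalence between full sub-2-categories matched up by the equivalence. The same argument applies verbatim if one restricts to strong rather than lax functors on both sides, as in the remark following Theorem \ref{Main Theorem}.
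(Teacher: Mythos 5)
Your proposal is correct and follows essentially the same route as the paper: both use Theorem \ref{thm:G-graded correspondence} for essential surjectivity on 0-cells, observe that $G$-gradedness of a 1-cell is detected entirely by the underlying functor on objects (which $P$ does not change) so the 1-cell constructions and their essential inverses restrict, and note that 2-cells carry no grading condition so full faithfulness on hom-categories is inherited from Theorem \ref{Main Theorem}. The only difference is presentational — you phrase it as restricting $P$ to full sub-2-categories, while the paper constructs $P_G$ directly by the same formulas — which amounts to the same argument.
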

\begin{proof}
  We construct a 2-functor $P_G : \ggrvmoncat \to \ggrvmodtens$ in the same way as in Section \ref{Define 2-functor}.  The 0-cells in the image of $P_G$ are $G$-graded by the above, and the 1-cells are $G$-graded since $P_G$ doesn't change how 1-cells act on objects; 2-cells are unchanged from the non-$G$-graded case. Since $\ggrvmoncat$ and $\ggrvmodtens$ have the same composition as $\vmoncat$ and $\vmodtens$, respectively, it follows that $P_G$ is a 2-functor.

  To check that $P_G$ is a 2-equivalence, we again just need to check that it is essentially surjective on 0-cells, essentially surjective on 1-cells, and fully faithful on 2-cells.  Essential surjectivity on 0-cells is exactly Theorem \ref{thm:G-graded correspondence}.

  Next, let $\mathscr C$ and $\mathscr D$ be 0-cells in $\ggrvmoncat$ and consider a 1-cell $\E \mathcal R , \rho^R \R : \mathscr C \to \mathscr D$ in $\vmoncat$. Since $\mathcal R \E a \R = P \E \mathcal R \R \E a \R$ for all objects $a \in \mathscr C$, it follows that  $\mathcal R$ is $G$-graded if and only if $P \E \mathcal R \R$ is $G$-graded. Thus the same construction as for 1-cells in $\vmoncat$ and $\vmodtens$ gives us a map from 1-cells in $\ggrvmoncat$ to 1-cells in $\ggrvmodtens$. Since $P$ is essentially surjective on 1-cells, it now follows that $P_G$ is as well.

  Given two 1-cells $\mathcal R , \mathcal S$ between $G$-graded $\mathcal V$-monoidal categories, note that $\mathcal R$ and $\mathcal S$ are still 1-cells in $\vmoncat$ as well. Since 2-cells don't have any inherent $G$-grading, we have $\ggrvmoncat \E \mathcal R \to \mathcal S \R = \vmoncat \E \mathcal R \to \mathcal S \R$. This also holds for $\vmodtens$ and $\ggrvmodtens$, so $P_G$ is fully faithful on 2-cells as before.
\end{proof}


\section{$G$-extensions} \label{G-extensions}

Now fix a finite group $G$, a tensored $\mathcal V$-monoidal category $\mathcal A$ and its corresponding tensored $\mathcal V$-module tensor category $( A , \mathcal F_A^Z )$. For this section assume that $\mathcal V$ is linear and $\mathcal F_A^Z$ is strong monoidal.

In this section we extend Theorem \ref{G-graded equivalence} to the setting of $G$ extensions of $\mathcal A$ and $\E A , \mathcal F_A \R$. We construct $\gextvmoncat$ and $\gextvmodtens$, prove a correspondence between zero cells, and extend the correspondence to an equivalence of 2-categories.

\begin{defn}
  A \emph{$G$-graded extension} of $\mathcal A$ is a faithfully $G$-graded $\mathcal V$-monoidal category $\mathscr C = \bigoplus_{g \in G} \mathscr C_g$ equipped with a $\mathcal V$-monoidal equivalence $( \mathcal I^C , \iota^C ) : \mathcal A \to \mathscr C_e$.  For example, any faithfully $G$-graded $\mathcal V$-monoidal category $\mathscr C$, equipped with the identity on $\mathscr C_e$ is a $G$-extension of $\mathscr C_e$. If $( \mathscr C , \mathcal I^C , \iota^C )$ and $( \mathscr D , \mathcal I^D , \iota^D )$ are $G$-extensions of $\mathcal A$. We define a morphism from $\mathscr C$ to $\mathscr D$ to be a triple $\E \mathcal R , \rho , p \R$, where $\E \mathcal R , \rho \R$ is a $G$-graded strong $\mathcal V$-monoidal functor, and $p$ is a $\mathcal V$-monoidal natural transformation $p : \mathcal I^D \Rightarrow \mathcal I^C \circ \mathcal R_e$.
\end{defn}
\begin{defn}
  A \emph{$G$-graded extension} of $( A , \mathcal F_A^Z )$ is a faithfully $G$-graded rigid tensored $\mathcal V$-module tensor category $( C = \bigoplus_{g \in G} C_g , \mathcal F_C^Z )$ equipped with an equivalence $( I^C , i^C , \iota^C ) : A \to C_e$ of $\mathcal V$-module tensor categories. If $( C , \mathcal F_C^Z , I^C , i^C , \iota^C )$ and $( D , \mathcal F_D^Z , I^D , i^D , \iota^D )$ are $G$-extensions of $\E A , \mathcal F_A \R$, we define a morphism from $C$ to $D$ to be a tuple $\E R , \rho , r , p \R$, where $\E R , \rho , r \R : C \to D$ is a 1-cell in $\ggrvmodtens \E C \to D \R$, and $p : I^D \Rightarrow I^C \circ R_e$ is a monoidal natural transformation compatible with $r$: We require
  \begin{align} \label{Inclusion coherence}
    \begin{tikzpicture}[smallstring, baseline=25]
      \node (top) at (0,3) {$R_e \E I^C \E \mathcal F_A \E v \R \R \R$};
      \node (bot) at (0,0) {$\mathcal F_D \E v \R$};
      \node[draw, rectangle] (rd) at (0,1) {$i_v^D$};
      \node[draw, rectangle] (p) at (0,2) {$p_{\mathcal F_A \E v \R}$};
      \draw (bot) to (rd);
      \draw (rd) to (p);
      \draw (p) to (top);
    \end{tikzpicture}
    =
    \begin{tikzpicture}[smallstring, baseline=25]
      \node (top) at (0,3) {$R_e \E I^C \E \mathcal F_A \E v \R \R \R$};
      \node (bot) at (0,0) {$\mathcal F_D \E v \R$};
      \node[draw, rectangle] (re) at (0,1) {$r_v$};
      \node[draw, rectangle] (rc) at (0,2) {$R_e \E i_v^C \R$};
      \draw (bot) to (re);
      \draw (re) to (rc);
      \draw (rc) to (top);
    \end{tikzpicture}
  \end{align}
\end{defn}

\begin{defn}
  Define the 2-category $\gextvmoncat$ via:
\begin{itemize}
\item 0-cells are the $G$-extensions of $\mathcal A$.
\item 1-cells are the morphisms defined above.
\item 2-cell are the $1_{\mathcal V}$-graded monoidal natural transformations $\theta : \E \mathcal R , \rho , p \R \Rightarrow \E \mathcal S , \sigma , q \R : \mathscr C \to \mathscr D$ such that $\E p_a \theta_a \R \circ \E \blank \circ_{\mathscr D} \blank \R = q_a$ for each $a \in \mathscr C$
\end{itemize}

We define the 2-category $\gextvmodtens$ via:
\begin{itemize}
\item 0-cells are the $G$-extensions of $\E A , \mathcal F_A^Z \R$.
\item 1-cells are the morphisms defined above.
\item 2-cells are the monoidal natural transformations $\Theta : \E R , \rho , r , p \R \Rightarrow \E S , \sigma , s , q \R : C \to D$ such that for all $v \in \mathcal V$, $r_v \circ \Theta_{\mathcal F_C \E v \R} = s_v$, and for all $a \in C$, $p_a \circ \Theta_{I^C \E a \R} = q_a$. 
\end{itemize}

We again use the same compositions as for $\vmoncat$ and $\vmodtens$, with the slight change that for 1-cells we now need to compose the new coherences $p$ as well; we take the usual vertical composition of ($1_{\mathcal V}$-graded) monoidal natural transformations. All checks that $\gextvmoncat$ and $\gextvmodtens$ are 2-categories are analogous to the checks for $\vmoncat$ and $\vmodtens$.
\end{defn}


\begin{thm} \label{Extension correspondence}
  There is a bijective correspondence between equivalence classes
  \[
  \left\{\,
  \parbox{4cm}{$\mathcal V$-monoidal $G$-extensions \\$( \mathscr C = \bigoplus_{g \in G} \mathscr C_g , \mathcal I , \iota)$ of $\mathcal A$
  }\,\right\}
  \,\,\cong\,\,
  \left\{\,\parbox{6cm}{\rm
    $G$-extensions \\$(C = \bigoplus_{g \in G} C_g , \mathcal F_C^{\scriptscriptstyle Z} , I , i , \iota)$ of $( A , \mathcal F_A^Z )$
  }\,\right\}.
  \]
\end{thm}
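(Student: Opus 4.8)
The plan is to bootstrap off the $G$-graded correspondence on $0$-cells (Theorem \ref{thm:G-graded correspondence}) together with the fact that $P$ is a $2$-equivalence (Theorem \ref{Main Theorem}), exploiting that a $G$-extension is nothing more than a $G$-graded $0$-cell equipped with an equivalence $1$-cell out of the fixed object into its identity component. Concretely, starting from a $\mathcal V$-monoidal $G$-extension $\E \mathscr C , \mathcal I^C , \iota^C \R$ I would first apply Theorem \ref{thm:G-graded correspondence} to produce the faithfully $G$-graded $\mathcal V$-module tensor category $\E C , \mathcal F_C^Z \R$, where $C = \mathscr C^{\mathcal V}$ and $C_e = \mathscr C_e^{\mathcal V}$. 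Since $\mathscr C_e$ is itself a $0$-cell of $\vmoncat$ and $\E \mathcal I^C , \iota^C \R : \mathcal A \to \mathscr C_e$ is a $1$-cell there, I would then set $\E I^C , i^C , \iota^C \R \coloneqq P \E \E \mathcal I^C , \iota^C \R \R$, using that $P$ leaves the underlying functor and the laxitor $\iota^C$ unchanged and manufactures the action-coherence $i^C$ via \ref{Define r}. Because $P$ carries equivalence $1$-cells to equivalence $1$-cells, $\E I^C , i^C , \iota^C \R$ is an equivalence of $\mathcal V$-module tensor categories, and hence $\E C , \mathcal F_C^Z , I^C , i^C , \iota^C \R$ is a $G$-extension of $\E A , \mathcal F_A^Z \R$.

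For the reverse assignment, starting from $\E C , \mathcal F_C^Z , I^C , i^C , \iota^C \R$ I would invoke Theorem \ref{thm:G-graded correspondence} once more to recover $\mathscr C$, and apply the essential inverse of $P$ on $1$-cells, i.e.\ the construction of \ref{V-Functor Definition}, to the equivalence $\E I^C , i^C , \iota^C \R$ in order to obtain a $\mathcal V$-monoidal equivalence $\E \mathcal I^C , \iota^C \R : \mathcal A \to \mathscr C_e$. These two assignments are mutually inverse up to equivalence precisely because the underlying $0$-cell correspondence is already a bijection on equivalence classes (Theorem \ref{thm:G-graded correspondence}), and because composing $P$ with its essential inverse on $1$-cells amounts to taking mates twice, exactly as recorded in the proof that $P$ is essentially surjective on $1$-cells.

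It then remains to check that the assignment descends to equivalence classes, i.e.\ that an equivalence morphism of $G$-extensions is sent to an equivalence morphism. On the $\mathcal V$-monoidal side a morphism carries the extra datum of a $\mathcal V$-monoidal natural transformation $p : \mathcal I^D \Rightarrow \mathcal I^C \circ \mathcal R_e$, while on the module tensor side the extra datum $p$ is constrained by the inclusion coherence \ref{Inclusion coherence}. As $P$ is a $2$-functor it sends the $2$-cell $p$ to $\mate \E p_a \R$ under the identity adjunction, so the only genuinely new verification is that the defining condition for $p$ to be a $\mathcal V$-monoidal natural transformation translates, under this mate, into exactly the coherence \ref{Inclusion coherence}. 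I expect this to be the main obstacle: it is a mate computation entirely parallel to the identity $s_v \circ \Theta_{\mathcal F_A \E v \R} = r_v$ established in the ``Map on $2$-cells'' proposition, now run with the inclusion functors $\mathcal I^C , \mathcal I^D$ in place of generic $1$-cells and with $i^C , i^D$ in place of the action-coherences. Once this compatibility is in hand, functoriality of $P$ and the detection of equivalences by $P$ (Theorem \ref{Main Theorem}) yield that equivalence morphisms correspond to equivalence morphisms, which completes the bijection between equivalence classes.
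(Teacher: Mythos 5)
Your proposal is correct and follows essentially the same route as the paper: the paper's proof likewise applies the $G$-graded $0$-cell correspondence (Theorem \ref{thm:G-graded correspondence}) to $\mathscr C$, applies the induced map on $1$-cells to the inclusion $( \mathcal I , \iota )$ to obtain the equivalence $( I , i , \iota )$, and then deduces the bijection on equivalence classes from the fact that both induced maps of $P_G$ are bijections on equivalence classes of $0$-cells and $1$-cells. The compatibility of the extra datum $p$ with coherence \ref{Inclusion coherence}, which you flag as the main remaining verification, is precisely the mate computation the paper carries out in the following subsection when constructing the $2$-functor $P_G^{\mathcal A}$.
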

\begin{proof}
  From Theorem \ref{thm:G-graded correspondence}, we have a correspondence of 0-cells $P_G : \ggrvmoncat \to \ggrvmodtens$. Given a 0-cell $\E \mathscr C , \mathcal I , \iota \R$ in $\gextvmoncat$, we get a $G$-graded $\mathcal V$-module tensor category $\E C , \mathcal F_C \R$ via the map on 0-cells induced by $P_G$. By the map on 1-cells induced by $P_G$, we also get a $\mathcal V$-monoidal equivalence $\E I , \iota , i \R$ from $\E \mathcal I , \iota \R$. Thus we have a 0-cell $\E C , \mathcal F_C , I , \iota , i \R$ in $\gextvmodtens$. Further, since both of the induced maps from $P_G$ are bijections on equivalence classes (of 0-cells and 1-cells, respectively), it follows that we get a bijection from equivalence classes of 0-cells in $\gextvmoncat$ to 0-cells of $\gextvmodtens$.
\end{proof}

\subsection{$G$-extension 2-functor}

We lift our 2-functor $P_G : \ggrvmoncat \to \ggrvmodtens$ to a 2-functor $P_G^{\mathcal A} : \gextvmoncat \to \gextvmodtens$.  On 0-cells, we begin the definition of $P_G^{\mathcal A}$ with the correspondence in Theorem \ref{Extension correspondence}: From a 0-cell $( \mathscr C , \mathcal I^C , \iota^C )$ in $\gextvmoncat$, we use Theorem \ref{Extension correspondence} to get a 0-cell in $\gextvmodtens$.

Given 0-cells $( \mathscr C , \mathcal I^C , \iota^C )$, $( \mathscr D , \mathcal I^D , \iota^D )$ in $\gextvmoncat$ and a 1-cell $\E \mathcal R , \rho , p \R \in \gextvmoncat \E \mathscr C \to \mathscr D \R$, we construct the first three components of a 1-cell, $\E R , \rho , r \R$, as in Theorem \ref{G-graded equivalence}.  Lastly, we need a coherence natural isomorphism $p$. We notice that
\begin{align*}
  p_a \in \mathcal V \E 1_{\mathcal V} \to \mathscr D \E \mathcal I_D \E a \R \to \mathcal S \E \mathcal R \E \mathcal I_B \E a \R \R \R \R \R = D \E I_D \E a \R \to S \E R \E I_B \E a \R \R \R \R,
\end{align*}
so we use the same one, i.e., we take the mate of $p$ under this identity adjunction.
\begin{lem}
  Considered together, $\E R , \rho , r , p \R$ make up a 1-cell in $\gextvmodtens$. That is, coherence \ref{Inclusion coherence} is satisfied:
  \begin{align*}
    \begin{tikzpicture}[smallstring, baseline=25]
      \node (top) at (0,3) {$R_e \E I^C \E \mathcal F_A \E v \R \R \R$};
      \node (bot) at (0,0) {$\mathcal F_D \E v \R$};
      \node[draw, rectangle] (i) at (0,1) {$i_v^D$};
      \node[draw, rectangle] (p) at (0,2) {$p_{\mathcal F_A \E v \R}$};
      \draw (bot) to (i);
      \draw (i) to (p);
      \draw (p) to (top);
    \end{tikzpicture}
    =
    \begin{tikzpicture}[smallstring, baseline=25]
      \node (top) at (0,3) {$R_e \E I^C \E \mathcal F_A \E v \R \R \R$};
      \node (bot) at (0,0) {$\mathcal F_D \E v \R$};
      \node[draw, rectangle] (rv) at (0,1) {$r_v$};
      \node[draw, rectangle] (i) at (0,2) {$R_e \E i_v^C \R$};
      \draw (bot) to (rv);
      \draw (rv) to (i);
      \draw (i) to (top);
    \end{tikzpicture}
  \end{align*}
\end{lem}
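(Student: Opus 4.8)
The plan is to avoid any new string-diagram computation and instead read off Coherence~\ref{Inclusion coherence} as a direct instance of the 2-functoriality of $P$, applied inside the hom-category $\vmoncat \E \mathcal A \to \mathscr D_e \R$. The key observation is that $\mathcal I^D$ and $\mathcal I^C \circ \mathcal R_e$ (where $\mathcal R_e = \mathcal R \restrict_{\mathscr C_e}$) are both strong $\mathcal V$-monoidal functors $\mathcal A \to \mathscr D_e$, hence parallel 1-cells in $\vmoncat$, and that the datum $p$ of the 1-cell $\E \mathcal R , \rho , p \R$ in $\gextvmoncat$ is, by definition of a morphism of $G$-extensions, a $\mathcal V$-monoidal natural transformation $\mathcal I^D \Rightarrow \mathcal I^C \circ \mathcal R_e$ --- that is, precisely a 2-cell between these two parallel 1-cells.

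First I would record the values of $P$ on the relevant cells. By construction $P \E \mathcal I^D \R = \E I^D , \iota^D , i^D \R$ and $P \E \mathcal I^C \R = \E I^C , \iota^C , i^C \R$, so $i^C$ and $i^D$ are exactly the action-coherences produced by Equation~\ref{Define r}. Since $P$ is a 2-functor it preserves composition of 1-cells, whence $P \E \mathcal I^C \circ \mathcal R_e \R = P \E \mathcal I^C \R \circ P \E \mathcal R_e \R$; applying the composition formula for coherences in Definition~\ref{Define vmodtens} shows that the action-coherence of this composite at $v$ is $r_v \circ R_e \E i^C_v \R$. At this point I would note the small compatibility that the coherence $r$ attached to $\mathcal R_e$ agrees with the one attached to $\mathcal R$ from Theorem~\ref{G-graded equivalence}: both are given by the same mate formula of Equation~\ref{Define r}, an expression involving only objects and hom-objects in the $e$-graded components, where the images of $\mathcal V$ under the relevant left adjoints lie (as in Theorem~\ref{thm:G-graded correspondence}), and on which $\mathcal R_e$ coincides with $\mathcal R$.

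Next, because $P$ is a 2-functor it carries the 2-cell $p$ to a 2-cell $P \E p \R : P \E \mathcal I^D \R \Rightarrow P \E \mathcal I^C \circ \mathcal R_e \R$ in $\vmodtens$, and since the map on 2-cells is the mate under the identity adjunction we have $P \E p \R_a = p_a$ on the nose, matching the $p$ chosen in the construction of $P_G^{\mathcal A}$. Being a 2-cell in $\vmodtens$, $P \E p \R$ satisfies the defining coherence from Definition~\ref{Define vmodtens}, i.e.\ (source coherence)$_v \circ P \E p \R_{\mathcal F_A \E v \R} = $ (target coherence)$_v$. Substituting the source coherence $i^D$, the target coherence $r_v \circ R_e \E i^C_v \R$, and $P \E p \R = p$, this reads exactly
\[
  i_v^D \circ p_{\mathcal F_A \E v \R} = r_v \circ R_e \E i_v^C \R ,
\]
which is Coherence~\ref{Inclusion coherence}.

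I expect the only genuine subtlety --- and hence the step I would write most carefully --- to be the bookkeeping of conventions: confirming that $p$ points in the direction $\mathcal I^D \Rightarrow \mathcal I^C \circ \mathcal R_e$, so that $i^D$ is the \emph{source} and $r_v \circ R_e \E i^C_v \R$ the \emph{target} coherence, and verifying that the left-to-right composition formula for composite coherences in $\vmodtens$ yields $r_v \circ R_e \E i^C_v \R$ in this order rather than its reverse. Once these identifications are pinned down, the lemma is an immediate consequence of the fact that $P$ is a 2-functor, and no diagrammatic calculation is required.
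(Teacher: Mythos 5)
Your argument is correct, but it takes a genuinely different route from the paper. The paper proves this lemma by direct computation: it takes mates of both sides of Coherence \ref{Inclusion coherence} under Adjunction \ref{Main Adjunction}, uses naturality and unitality of $p$ on one side and functoriality of $\mathcal R$ together with Lemma \ref{unit-counit mates} on the other, and checks that both mates equal the composite $\eta_v^A \circ \mathcal I_{1_{\mathcal A} \to \mathcal F_A \E v \R}^C \circ \mathcal R_{1_{\mathscr C} \to I^C \E \mathcal F_A \E v \R \R}$. You instead read the coherence off as an instance of the 2-functoriality already established in Section \ref{Define 2-functor}: since $p$ is by definition a 2-cell $\mathcal I^D \Rightarrow \mathcal I^C \circ \mathcal R_e$ in $\vmoncat \E \mathcal A \to \mathscr D_e \R$, the proposition on the map on 2-cells shows that $P \E p \R = p$ satisfies the $\vmodtens$ 2-cell condition (source coherence followed by $p_{\mathcal F_A \E v \R}$ equals target coherence); preservation of composition of 1-cells identifies the target coherence of $P \E \mathcal I^C \circ \mathcal R_e \R$ with $r_v \circ R_e \E i_v^C \R$; and by the construction in Theorem \ref{Extension correspondence} the source coherence is exactly $i^D$, giving precisely \ref{Inclusion coherence}. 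This buys a shorter, more conceptual proof with no new diagrammatic work, at the cost of some hypotheses you must pin down: (i) $\mathscr C_e$ and $\mathscr D_e$ must be 0-cells of $\vmoncat$ for the Section \ref{Define 2-functor} results to apply, which holds because faithful $G$-grading forces $\mathcal F_C \E \mathcal V \R \subseteq \mathscr C_e$ and duals of $e$-graded objects to be $e$-graded (Theorem \ref{thm:G-graded correspondence}), a fact the paper itself uses implicitly when applying $P_G$ to $\mathcal I^C$; (ii) the compatibility you flag between the coherence of $\mathcal R_e$ and the restriction of $r$, which indeed holds since the mate formula \ref{Define r} only involves data lying in the $e$-components; and (iii) your worry about the order of the composite coherence resolves in your favor, with the caveat that the formula as printed in Definition \ref{Define vmodtens}, namely $r_v \circ S \E s_v \R$, is a typo that does not typecheck --- the version the paper actually verifies when proving that $P$ preserves composition of 1-cells is $s_v \circ S \E r_v \R$, which in your instance reads $r_v \circ R_e \E i_v^C \R$ exactly as required. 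With those three points made explicit, your proof is complete.
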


\begin{proof}
  Taking the mate of the left hand side and applying naturality and unitality of $p$, we have
  \begin{align*}
    \begin{tikzpicture}[smallstring, baseline=30]
      \node (top) at (1,3) {$\mathscr D \E 1_{\mathscr D} \to R_e \E I^C \E \mathcal F_A \E v \R \R \R \R$};
      \node (bot) at (0,0) {$v$};
      \node[draw, rectangle] (i) at (0,1) {$\mate \E i_v^D \R$};
      \node[draw, rectangle] (p) at (2,1) {$p_{\mathcal F_A \E v \R}^{\mathcal R}$};
      \node[draw, rectangle] (circ) at (1,2) {$\blank \circ \blank$};
      \draw (bot) to[in=-90,out=90] (i);
      \draw (i) to[in=-90,out=90] (circ.225);
      \draw (p) to[in=-90,out=90] (circ.-45);
      \draw (circ) to[in=-90,out=90] (top);
    \end{tikzpicture}
    =
    \begin{tikzpicture}[smallstring, baseline=30]
      \node (top) at (1,4) {$\mathscr D \E 1_{\mathscr D} \to R_e \E I^C \E \mathcal F_A \E v \R \R \R \R$};
      \node (bot) at (0,0) {$v$};
      \node[draw, rectangle] (eta) at (0,1) {$\eta_v^A$};
      \node[draw, rectangle] (I) at (0,2) {$\mathcal I_{1_{\mathcal A} \to \mathcal F_A \E v \R}^D$};
      \node[draw, rectangle] (p) at (2,2) {$p_{\mathcal F_A \E v \R}^{\mathcal R}$};
      \node[draw, rectangle] (circ) at (1,3) {$\blank \circ \blank$};
      \draw (bot) to[in=-90,out=90] (eta);
      \draw (eta) to[in=-90,out=90] (I);
      \draw (I) to[in=-90,out=90] (circ.225);
      \draw (p) to[in=-90,out=90] (circ.-45);
      \draw (circ) to[in=-90,out=90] (top);
    \end{tikzpicture}
    =
    \begin{tikzpicture}[smallstring, baseline=40]
      \node (top) at (1,5) {$\mathscr D \E 1_{\mathscr D} \to R_e \E I^C \E \mathcal F_A \E v \R \R \R \R$};
      \node (bot) at (2,0) {$v$};
      \node[draw, rectangle] (eta) at (2,1) {$\eta_v^A$};
      \node[draw, rectangle] (I) at (2,2) {$\mathcal I_{1_{\mathcal A} \to \mathcal F_A \E v \R}^C$};
      \node[draw, rectangle] (p) at (0,3) {$p_{1_{\mathscr D}}^{\mathcal R}$};
      \node[draw, rectangle] (R) at (2,3) {$\mathcal R_{1_{\mathscr C} \to I^C \E \mathcal F_A \E v \R \R}$};
      \node[draw, rectangle] (circ) at (1,4) {$\blank \circ \blank$};
      \draw (bot) to[in=-90,out=90] (eta);
      \draw (eta) to[in=-90,out=90] (I);
      \draw (I) to[in=-90,out=90] (R);
      \draw (p) to[in=-90,out=90] (circ.225);
      \draw (R) to[in=-90,out=90] (circ.-45);
      \draw (circ) to[in=-90,out=90] (top);
    \end{tikzpicture}
    =
    \begin{tikzpicture}[smallstring, baseline=30]
      \node (top) at (0,4) {$\mathscr D \E 1_{\mathscr D} \to R_e \E I^C \E \mathcal F_A \E v \R \R \R \R$};
      \node (bot) at (0,0) {$v$};
      \node[draw, rectangle] (eta) at (0,1) {$\eta_v^A$};
      \node[draw, rectangle] (I) at (0,2) {$\mathcal I_{1_{\mathcal A} \to \mathcal F_A \E v \R}^C$};
      \node[draw, rectangle] (R) at (0,3) {$\mathcal R_{1_{\mathscr C} \to I^C \E \mathcal F_A \E v \R \R}$};
      \draw (bot) to[in=-90,out=90] (eta);
      \draw (eta) to[in=-90,out=90] (I);
      \draw (I) to[in=-90,out=90] (R);
      \draw (R) to[in=-90,out=90] (top);
    \end{tikzpicture}
  \end{align*}
  On the right hand side, taking mates, using functoriality of $\mathcal R$, and applying Lemma \ref{unit-counit mates} gives
  \begin{align*}
    \begin{tikzpicture}[smallstring, baseline=20]
      \node (top) at (1.5,4.2) {$\mathscr D \E 1_{\mathscr D} \to R_e \E I^C \E \mathcal F_A \E v \R \R \R \R$};
      \node (bot) at (0,0) {$v$};
      \node[draw, rectangle] (eta) at (0,1) {$\eta_v^C$};
      \node[draw, rectangle] (i) at (3,1) {$i_v^C$};
      \node[draw, rectangle] (R1) at (0,2) {$\mathcal R_{1_{\mathscr C} \to \mathcal F_C \E c\R}$};
      \node[draw, rectangle] (R2) at (3,2) {$\mathcal R_{\mathcal F_C \E v \R \to E^C \E \mathcal F_A \E v \R \R}$};
      \node[draw, rectangle] (circ) at (1.5,3.2) {$\blank \circ \blank$};
      \draw (bot) to[in=-90,out=90] (eta);
      \draw (eta) to[in=-90,out=90] (R1);
      \draw (i) to[in=-90,out=90] (R2);
      \draw (R1) to[in=-90,out=90] (circ.225);
      \draw (R2) to[in=-90,out=90] (circ.-45);
      \draw (circ) to[in=-90,out=90] (top);
    \end{tikzpicture}
    =
    \begin{tikzpicture}[smallstring, baseline=30]
      \node (top) at (1,4) {$\mathscr D \E 1_{\mathscr D} \to R_e \E I^C \E \mathcal F_A \E v \R \R \R \R$};
      \node (bot) at (0,0) {$v$};
      \node[draw, rectangle] (eta) at (0,1) {$\eta_v^C$};
      \node[draw, rectangle] (i) at (2,1) {$i_v^C$};
      \node[draw, rectangle] (circ) at (1,2) {$\blank \circ \blank$};
      \node[draw, rectangle] (R) at (1,3) {$\mathcal R_{1_{\mathscr C} \to I^C \E \mathcal F_A \E v \R \R}$};
      \draw (bot) to[in=-90,out=90] (eta);
      \draw (eta) to[in=-90,out=90] (circ.225);
      \draw (i) to[in=-90,out=90] (circ.-45);
      \draw (circ) to[in=-90,out=90] (R);
      \draw (R) to[in=-90,out=90] (top);
    \end{tikzpicture}
    =
    \begin{tikzpicture}[smallstring, baseline=30]
      \node (top) at (0,3) {$\mathscr D \E 1_{\mathscr D} \to R_e \E I^C \E \mathcal F_A \E v \R \R \R \R$};
      \node (bot) at (0,0) {$v$};
      \node[draw, rectangle] (i) at (0,1) {$\mate \E i_v^C \R$};
      \node[draw, rectangle] (R) at (0,2) {$\mathcal R_{1_{\mathscr C} \to I^C \E \mathcal F_A \E v \R \R}$};
      \draw (bot) to[in=-90,out=90] (i);
      \draw (i) to[in=-90,out=90] (R);
      \draw (R) to[in=-90,out=90] (top);
    \end{tikzpicture}
  \end{align*}
  which is the same as the mate of the left hand side above.
\end{proof}

Lastly, we need to work with 2-cells.  Given 1-cells $\E \mathcal R , \rho , p \R , \E \mathcal S , \sigma , q \R : \mathscr C \to \mathscr D$ and a 2-cell $\theta : \mathcal R \Rightarrow \mathcal S$, we construct $\Theta : \E R , \rho , r , p \R \Rightarrow \E S , \sigma , s , q \R$ via $\Theta_a = \theta_a$ using the identity adjunction
\begin{align*}
  \mathcal V \E 1_{\mathcal V} \to \mathscr D \E \mathcal R \E a \R \to \mathcal S \E a \R \R \R = D \E R \E a \R \to S \E a \R \R.
\end{align*}
\begin{lem}
  Under these definitions, we have $p_a \circ \Theta_{I^C \E a \R} = q_a$.
\end{lem}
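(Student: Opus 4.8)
The plan is to recognise this identity as a verbatim restatement of the 2-cell condition already satisfied by $\theta$ in $\gextvmoncat$, after unwinding the definition of composition in the underlying category. First I would recall that $\Theta_a = \mate(\theta_a) = \theta_a$ under the identity adjunction $\mathcal V(1_{\mathcal V} \to \mathscr D(\mathcal R(a) \to \mathcal S(a))) = D(R(a) \to S(a))$, so that $\Theta$ and $\theta$ are the same family of $1_{\mathcal V}$-graded morphisms, read on the two sides of this equality. Likewise $p$ and $q$ are left unchanged by $P_G^{\mathcal A}$, which takes their mates under the same identity adjunction, so there is nothing to reconcile on that front.

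Next I would expand the ordinary composite $p_a \circ \Theta_{I^C(a)}$ taking place in the underlying category $D = \mathscr D^{\mathcal V}$. By the definition of the underlying category, the composite of two $1_{\mathcal V}$-graded morphisms $f$ and $g$ is $f \circ g = (fg)\circ(\blank\circ_{\mathscr D}\blank)$. Applying this with $f = p_a$ and $g = \Theta_{I^C(a)} = \theta_{I^C(a)}$ yields exactly $(p_a\,\theta_{I^C(a)})\circ(\blank\circ_{\mathscr D}\blank)$.

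Finally, this expression is precisely the left-hand side of the defining condition for $\theta$ to be a 2-cell in $\gextvmoncat$, whose right-hand side is $q_a$; the identity then follows immediately, giving $p_a \circ \Theta_{I^C(a)} = q_a$ as claimed. The argument is purely formal, and I do not expect any genuine obstacle: the only point requiring care is the bookkeeping of indices, namely matching the object $a \in A$ at which $p$ and $q$ are evaluated with the object $I^C(a) \in \mathscr C_e$ at which $\theta$ (equivalently $\Theta$) is evaluated, together with the observation that the enriched composition $(\blank)\circ(\blank\circ_{\mathscr D}\blank)$ is by definition the ordinary composition in $D$. Once these identifications are made explicit, the two coherence conditions coincide on the nose.
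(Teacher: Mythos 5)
Your proof is correct and is essentially the paper's own argument: the paper simply says that taking mates on both sides (i.e., under the identity adjunction) recovers the coherence condition $\E p_a \theta_{\mathcal I^C \E a \R} \R \circ \E \blank \circ_{\mathscr D} \blank \R = q_a$ defining 2-cells in $\gextvmoncat$, which is exactly what you do by unwinding composition in the underlying category $D = \mathscr D^{\mathcal V}$. Your version just makes the bookkeeping (the identification $\Theta = \theta$, $p$ and $q$ fixed, and ordinary composition as $\E fg \R \circ \E \blank \circ_{\mathscr D} \blank \R$) explicit.
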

\begin{proof}
  Taking mates on both sides gives exactly the coherence condition on $\theta$.
\end{proof}
\begin{prop}
  The above construction of $P_G^{\mathcal A} : \gextvmoncat \to \gextvmodtens$ defines a 2-functor.
\end{prop}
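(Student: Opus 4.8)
The plan is to reduce every 2-functor axiom to the corresponding fact for $P_G : \ggrvmoncat \to \ggrvmodtens$, which is a 2-functor by Theorem \ref{G-graded equivalence}. By construction $P_G^{\mathcal A}$ agrees with $P_G$ on 0-cells, on the $\E R , \rho , r \R$ portion of each 1-cell, and on every 2-cell $\Theta$; the only additional datum is the inclusion-coherence $p$, which $P_G^{\mathcal A}$ transports by taking its mate under the identity adjunction, so that the underlying natural transformation is left untouched. Thus each of the four required axioms (preservation of unit 1-cells, functoriality of each hom-level map, preservation of horizontal composition of 2-cells, and preservation of composition of 1-cells) splits into a part that is immediate from $P_G$ and a part that concerns only the $p$-component.

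First I would dispatch the routine axioms. For unit 1-cells, the unit on $\E \mathscr C , \mathcal I^C , \iota^C \R$ carries the identity $\mathcal V$-monoidal functor together with $p = \id_{\mathcal I^C}$; since the mate under the identity adjunction fixes identities, its image is precisely the unit 1-cell of the corresponding 0-cell of $\gextvmodtens$, while the other three components match by the $P_G$ computation. For functoriality of each $P_{\mathscr C \to \mathscr D}^{\mathcal A}$, the lemma immediately preceding the proposition already shows that the relation $p_a \circ \Theta_{I^C \E a \R} = q_a$ demanded of a 2-cell in $\gextvmodtens$ is exactly the mate of the coherence imposed on $\theta$ in $\gextvmoncat$, so $\Theta$ is a well-defined 2-cell; as $\Theta_a = \theta_a$ under the identity adjunction, vertical composition and identity 2-cells are preserved verbatim as for $P_G$. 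Horizontal composition of 2-cells is handled identically: the defining formula is unchanged, $\Theta_a = \theta_a$, and the extra $p/q$-coherence is carried along by the mate.

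The substance is preservation of composition of 1-cells. For $\E \mathcal R , \rho , p \R$ and $\E \mathcal S , \sigma , q \R$, the $\E R , \rho , r \R$ components of $P_G^{\mathcal A} \E \mathcal R \circ \mathcal S \R$ and of $P_G^{\mathcal A} \E \mathcal R \R \circ P_G^{\mathcal A} \E \mathcal S \R$ already agree by Theorem \ref{G-graded equivalence}, so it remains only to compare the coherence components. Both the composite coherence in $\gextvmoncat$ and the composite coherence in $\gextvmodtens$ are formed by the same recipe — a vertical composite of $q$ with the whiskering of $p$ by the identity-graded component of the second functor — exactly paralleling how the action-coherence $r$ is composed in $\vmodtens$. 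Since $P_G^{\mathcal A}$ leaves the underlying natural transformation of $p$ unchanged, and the underlying functor of $\mathcal S_e$ is $S_e = \mathcal S_e^{\mathcal V}$, whiskering by $\mathcal S_e$ corresponds under the identity-adjunction mate to whiskering by $S_e$, so the two composite coherences coincide.

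The step I expect to require the most care is precisely this last matching of the $p$-coherences: one must confirm that applying the second functor to $p$ and then composing with $q$ is intertwined by the identity-adjunction mate with the analogous composite of ordinary natural transformations between the underlying functors. This is the same enriched-versus-underlying bookkeeping already carried out for the laxitor in the proof that $P$ preserves composition of 1-cells, and it follows by an application of Lemma \ref{Composition Lemma} to slide the relevant mate past the functor; no genuinely new ideas are needed.
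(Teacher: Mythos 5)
Your proposal is correct and follows essentially the same route as the paper: the unit and 2-cell axioms are inherited verbatim from $P_G$ (Theorem \ref{G-graded equivalence}) since $P_G^{\mathcal A}$ only adds the coherence $p$, and preservation of 1-cell composition reduces to the fact that $p$ is transported by a mate under the identity adjunction, and mates intertwine enriched composition and whiskering with their ordinary counterparts (the paper invokes Lemma \ref{mate of fgcirc}, you invoke Lemma \ref{Composition Lemma}; both express the same compatibility of mates with composition). Your write-up is somewhat more explicit about the whiskering-by-$\mathcal S_e$ bookkeeping, but no new ideas beyond the paper's argument are involved.
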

\begin{proof}
  We already have a map on 0-cells, and for each pair of 0-cells $\mathscr C$ and $\mathscr D$ in $\vmoncat$, we can use the above maps on 1-cells and 2-cells to construct the functor
  \begin{align*}
    P_G^{\mathcal A}{}_{\mathscr C \to \mathscr D} : \gextvmoncat \E ( \mathscr C , \mathcal I^C , \iota^C ) \to ( \mathscr D , \mathcal I^D , \iota^D ) \R \to \gextvmodtens \E P_G^{\mathcal A} \E  ( \mathscr C , \mathcal I^C , \iota^C ) \R \to P_G^{\mathcal A} \E ( \mathscr D , \mathcal I^D , \iota^D ) \R \R
  \end{align*}
  Now we just need to check that $P_G^{\mathcal A}$ as defined here is a 2-functor:
  \begin{itemize}
  \item For each 0-cell $\E \mathscr C , \mathcal I^C , \iota^C \R$, both $\id_{P_G^{\mathcal A} \E \mathscr C \R}$ and $P_G^{\mathcal A}{}_{\mathscr C \to \mathscr C} \E \id_{\mathscr C} \R$ are the identity 1-cell on $\mathscr C$.
  \item Composition of 1-cells is preserved by $P_G^{\mathcal A}$. To see this, notice that the only change from Section \ref{G-graded correspondence} is that we now have an extra coherence natural transformation $p$. However, $p$ is mapped directly to its mate under adjunction \ref{Main Adjunction}, and mates preserve composition (Lemma \ref{mate of fgcirc}).
  \item Composition of 2-cells is unchanged from previous sections, so both horizontal and vertical composition of 2-cells are preserved.
  \end{itemize}

\end{proof}

\subsection{$G$-extension equivalence}

Now to finish the equivalence, we need to show that $P_G^{\mathcal A}$ is fully faithful and essentially surjective.

\begin{lem}
  The 2-functor $P_G^{\mathcal A}$ is essentially surjective on 0-cells.
\end{lem}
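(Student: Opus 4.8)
The plan is to observe that essential surjectivity on 0-cells is, by construction, precisely the surjectivity half of the bijection already established in Theorem \ref{Extension correspondence}. First I would unwind the definition: to say $P_G^{\mathcal A}$ is essentially surjective on 0-cells means that for an arbitrary 0-cell $( C , \mathcal F_C^Z , I^C , i^C , \iota^C )$ of $\gextvmodtens$, I must exhibit a 0-cell $( \mathscr C , \mathcal I^C , \iota^C )$ of $\gextvmoncat$ together with an equivalence $P_G^{\mathcal A} ( \mathscr C , \mathcal I^C , \iota^C ) \simeq ( C , \mathcal F_C^Z , I^C , i^C , \iota^C )$ in $\gextvmodtens$.

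Next I would invoke Theorem \ref{Extension correspondence}, which supplies a bijection between equivalence classes of $G$-extensions of $\mathcal A$ and equivalence classes of $G$-extensions of $( A , \mathcal F_A^Z )$. In particular this map on equivalence classes is surjective, so the class of the given $( C , \mathcal F_C^Z , I^C , i^C , \iota^C )$ is hit by the class of some $\mathcal V$-monoidal $G$-extension $( \mathscr C , \mathcal I^C , \iota^C )$ of $\mathcal A$. Since the assignment on 0-cells that defines $P_G^{\mathcal A}$ is exactly the passage from $( \mathscr C , \mathcal I^C , \iota^C )$ to its corresponding $\gextvmodtens$-object under Theorem \ref{Extension correspondence}, we obtain $P_G^{\mathcal A} ( \mathscr C , \mathcal I^C , \iota^C ) \simeq ( C , \mathcal F_C^Z , I^C , i^C , \iota^C )$, which is the desired conclusion.

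I do not expect a genuine obstacle in this lemma: all of the substantive content — constructing the underlying $G$-graded $\mathcal V$-monoidal category from the module-tensor data, lifting the inclusion of the identity-graded piece, and verifying the $G$-grading is preserved in both directions — was already carried out in the proof of Theorem \ref{Extension correspondence}, which in turn rests on Theorems \ref{Main MP Theorem} and \ref{thm:G-graded correspondence}. The only point requiring attention, and it holds by definition, is that the 0-cell construction used to define $P_G^{\mathcal A}$ coincides with the correspondence of Theorem \ref{Extension correspondence}; once that is noted, the lemma is immediate.
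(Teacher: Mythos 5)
Your proposal is correct and matches the paper exactly: the paper's entire proof is the single sentence ``This is exactly Theorem \ref{Extension correspondence},'' and your argument is simply that proof with the definitions unwound — identifying essential surjectivity on 0-cells with the surjectivity half of the bijection on equivalence classes, and noting that the 0-cell assignment of $P_G^{\mathcal A}$ is by construction that correspondence. No gap; your version just makes explicit what the paper leaves implicit.
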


\begin{proof}
  This is exactly Theorem \ref{Extension correspondence}.
\end{proof}
\begin{lem}
  The 2-functor $P_G^{\mathcal A}$ is also essentially surjective on 1-cells.
\end{lem}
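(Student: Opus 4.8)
The plan is to mirror the structure of the essential-surjectivity-on-1-cells argument for the non-extended case (the proposition asserting that the construction from Equation \eqref{V-Functor Definition} is an essential inverse to $P_{\mathcal A \to \mathcal B}$), but now upgraded to carry the extra inclusion-coherence data $p$. Concretely, given a 1-cell $\E R, \rho, r, p \R : (C, \mathcal F_C^Z, I^C, i^C, \iota^C) \to (D, \mathcal F_D^Z, I^D, i^D, \iota^D)$ in $\gextvmodtens$, I would first apply the already-established essential surjectivity of $P_G$ on 1-cells (from Theorem \ref{G-graded equivalence}) to the underlying triple $\E R, \rho, r \R$. This produces a $G$-graded strong $\mathcal V$-monoidal functor $\E \mathcal R, \rho^R \R : \mathscr C \to \mathscr D$ with $P_G \E \mathcal R, \rho^R \R \simeq \E R, \rho, r \R$, where $\mathcal R$ is defined on hom objects by the mate formula in Equation \eqref{V-Functor Definition}.

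Next I would construct the missing piece, the $\mathcal V$-monoidal natural transformation $p^{\mathcal R} : \mathcal I^D \Rightarrow \mathcal I^C \circ \mathcal R_e$. Since each component $p_a$ lives in $D \E I^D \E a \R \to S \E R \E I^B \E a \R \R \R \R = \mathcal V \E 1_{\mathcal V} \to \mathscr D \E \cdots \R \R$, I take $p_a^{\mathcal R} \coloneqq \mate \E p_a \R$ under the identity adjunction, exactly as was done for 2-cells in the $G$-extension 2-functor construction. The work is then to verify three things: that $p^{\mathcal R}$ is $1_{\mathcal V}$-graded natural with respect to $\mathcal R_e$ and $\mathcal I^C, \mathcal I^D$, that it is monoidal, and — most importantly — that the triple $\E \mathcal R, \rho^R, p^{\mathcal R} \R$ together with the inclusion-coherence is sent by $P_G^{\mathcal A}$ back to a 1-cell isomorphic to $\E R, \rho, r, p \R$. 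For the first two items, taking mates under Adjunction \ref{Main Adjunction} reduces naturality and monoidality of $p^{\mathcal R}$ to the corresponding naturality and monoidality of $p$ in $\gextvmodtens$, just as in the analogous 2-cell lemma; these are routine mate computations using Lemmas \ref{Composition Lemma} and \ref{mate of fgcirc}.

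The genuinely new obstacle is checking compatibility of $p^{\mathcal R}$ with the inclusion functors at the enriched level, i.e.\ that $\E \mathcal R, \rho^R, p^{\mathcal R} \R$ satisfies the coherence needed to be a legitimate 1-cell in $\gextvmoncat$, and conversely that applying $P_G^{\mathcal A}$ recovers the coherence \eqref{Inclusion coherence} relating $p$ to $r$, $i^C$, and $i^D$. The strategy is to take the mate of the required enriched inclusion coherence and show, using the proposition already proved above (that $\E R, \rho, r, p \R$ is a 1-cell in $\gextvmodtens$, i.e.\ \eqref{Inclusion coherence} holds) together with the definition of $i^C, i^D$ as the lifts of $\iota^C, \iota^D$, that it collapses to \eqref{Inclusion coherence}. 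This is precisely the computation carried out in the lemma establishing that $\E R, \rho, r, p \R$ forms a 1-cell, read in reverse; since mates are invertible, the two coherences are equivalent. The final step is then a bookkeeping check that the reconstructed tuple $\E \boldsymbol R, \boldsymbol \rho, \boldsymbol r, \boldsymbol p \R = P_G^{\mathcal A} \E \mathcal R, \rho^R, p^{\mathcal R} \R$ agrees with $\E R, \rho, r, p \R$: the first three components coincide by the already-proved essential surjectivity of $P$, and $\boldsymbol p = \mate \E \mate \E p \R \R = p$ since applying $\mate$ twice under the identity adjunction is the identity. I expect the hardest part to be confirming that the inclusion coherence passes cleanly through the mate correspondence, since it simultaneously involves the oplax structure of $\mathcal F$, the half-braidings, and the lift $i^C$ of the equivalence $\iota^C$; but because every ingredient has already appeared in the preceding lemmas, no fundamentally new identity should be required.
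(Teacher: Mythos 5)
Your overall skeleton does match the paper's: reuse the earlier construction for $\E \mathcal R , \rho^R \R$, define $p^{\mathcal R}$ as the mate of $p$ under the identity adjunction, check that it is a $\mathcal V$-monoidal natural transformation, and conclude by observing that $P_G^{\mathcal A}$ sends $p$ to itself (so double-mating is the identity). But there is a genuine misstep in where you locate the work. You claim that $1_{\mathcal V}$-graded naturality of $p^{\mathcal R}$ reduces ``routinely'' to ordinary naturality of $p$, ``just as in the analogous 2-cell lemma.'' That is false, and it is exactly the point the paper flags when it says monoidality follows directly but ``naturality is more involved.'' The enriched naturality square for $p^{\mathcal R} : \mathcal I^D \Rightarrow \mathcal I^C \circ \mathcal R_e$ involves the hom-object morphisms $\mathcal I^D_{a \to b}$ and $\E \mathcal I^C \circ \mathcal R_e \R_{a \to b}$, and under Adjunction \ref{Main Adjunction} their mates are built from $i^D, \iota^D$ on one side and from $i^C, r, \rho, \iota^C$ on the other. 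Matching the two sides therefore forces you to trade $r_v \circ R_e \E i_v^C \R$ for $i_v^D \circ p_{\mathcal F_A \E v \R}$, i.e.\ to invoke coherence \ref{Inclusion coherence}, in addition to ordinary naturality of $p$ (used twice) and recognition of the mate of the composite functor $\mathcal I^C \circ \mathcal R$. The 2-cell analogy breaks precisely because 2-cell conditions involve only components, whereas the enriched functor data $\mathcal I^D_{a \to b}$, $\mathcal R_{a \to b}$ carries the extra structure $i^D$, $r$ in its mates.

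Correspondingly, the condition you single out as the ``genuinely new obstacle'' --- an enriched inclusion coherence that the triple $\E \mathcal R , \rho^R , p^{\mathcal R} \R$ must satisfy --- does not exist: by definition, a 1-cell in $\gextvmoncat$ requires only that $\E \mathcal R , \rho^R \R$ be a $G$-graded strong $\mathcal V$-monoidal functor and that $p^{\mathcal R}$ be a $\mathcal V$-monoidal natural transformation; there is no analogue of \ref{Inclusion coherence} on the enriched side, since there is no $r$ there to be coherent with. So as organized, your proof checks a vacuous condition while dismissing as routine the one step where \ref{Inclusion coherence} is actually consumed, and it would stall at the naturality verification. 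All the needed ingredients do appear in your write-up; the repair is to move the argument of your third paragraph (coherence \ref{Inclusion coherence}, naturality of $p$, and the mate of the composite functor) into the proof of enriched naturality of $p^{\mathcal R}$, which is exactly what the paper's computation does.
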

\begin{proof}  
  Given $\E R , \rho , r , p \R : \E C , I^C , \iota^C , i^C \R \to \E D , I^D , \iota^D , i^D \R$ a 1-cell of $G$-graded extensions of $\E A , \mathcal F_A \R$, we construct $\E \mathcal R , \rho \R$ as before, and again take mates under the identity adjunction for $p$.  The only check needed to prove that $\E \mathcal R , \rho , p \R$ is a 1-cell in $\gextvmoncat$ is to show that $p$ is a $\mathcal V$-monoidal natural transformation.  Monoidality follows directly from taking mates and applying monoidality of $p$, but naturality is more involved. We need to show that
  \begin{align*}
    \begin{tikzpicture}[smallstring, baseline=30]
      \node (top) at (1,4) {$\mathscr D \E I^D \E a \R  \to R \E I^C \E b \R \R \R$};
      \node (bot) at (2,0) {$\mathcal A \E a \to b \R$};
      \node[draw, rectangle] (I) at (2,1) {$\mathcal I_{a \to b}^C$};
      \node[draw, rectangle] (p) at (0,2) {$p_a^{\mathcal R}$};
      \node[draw, rectangle] (R) at (2,2) {$\mathcal R_{I^C \E a \R \to I^C \E b \R}$};
      \node[draw, rectangle] (circ) at (1,3) {$\blank \circ \blank$};
      \draw (bot) to[in=-90,out=90] (I);
      \draw (I) to[in=-90,out=90] (R);
      \draw (p) to[in=-90,out=90] (circ.225);
      \draw (R) to[in=-90,out=90] (circ.-45);
      \draw (circ) to[in=-90,out=90] (top);
    \end{tikzpicture}
    =
    \begin{tikzpicture}[smallstring, baseline=30]
      \node (top) at (1,3) {$\mathscr D \E I^D \E a \R  \to R \E I^C \E b \R \R \R$};
      \node (bot) at (0,0) {$\mathcal A \E a \to b \R$};
      \node[draw, rectangle] (I) at (0,1) {$\mathcal I_{a \to b}^D$};
      \node[draw, rectangle] (p) at (2,1) {$p_b^{\mathcal R}$};
      \node[draw, rectangle] (circ) at (1,2) {$\blank \circ \blank$};
      \draw (bot) to[in=-90,out=90] (I);
      \draw (I) to[in=-90,out=90] (circ.225);
      \draw (p) to[in=-90,out=90] (circ.-45);
      \draw (circ) to[in=-90,out=90] (top);
    \end{tikzpicture}
  \end{align*}
  Starting with the right hand side, taking mates and using naturality of $p$ twice (note the composite laxitor), we get
  \begin{align*}
    \begin{tikzpicture}[smallstring, baseline=30]
      \node (top) at (1,5) {$R \E I^C \E b \R \R$};
      \node (botI) at (0,0) {$I^D \E a \R$};
      \node (botF) at (2,0) {$\mathcal F_D \E \mathcal A \E a \to b \R \R$};
      \node[draw, rectangle] (i) at (2,0.9) {$i_{\mathcal A \E a \to b \R}^D$};
      \node[draw, rectangle] (iota) at (1,2) {$\iota_{a , \mathcal F_A \E \mathcal A \E a \to b \R \R}^D$};
      \node[draw, rectangle] (epsilon) at (1,3) {$I^D \E \epsilon_{a \to b}^{\mathcal F_A} \R$};
      \node[draw, rectangle] (p) at (1,4) {$p_b^R$};
      \draw (botI) to[in=-90,out=90] (iota.225);
      \draw (botF) to[in=-90,out=90] (i);
      \draw (i) to[in=-90,out=90] (iota.-45);
      \draw (iota) to[in=-90,out=90] (epsilon);
      \draw (epsilon) to[in=-90,out=90] (p);
      \draw (p) to[in=-90,out=90] (top);
    \end{tikzpicture}
    =
    \begin{tikzpicture}[smallstring, baseline=30]
      \node (top) at (1,5) {$R \E I^C \E b \R \R$};
      \node (botI) at (0,0) {$I^D \E a \R$};
      \node (botF) at (2,0) {$\mathcal F_D \E \mathcal A \E a \to b \R \R$};
      \node[draw, rectangle] (i) at (2,0.9) {$i_{\mathcal A \E a \to b \R}^D$};
      \node[draw, rectangle] (iota) at (1,2) {$\iota_{a , \mathcal F_A \E \mathcal A \E a \to b \R \R}^D$};
      \node[draw, rectangle] (p) at (1,3) {$p_{a \mathcal F_A \E \mathcal A \E a \to b \R \R}^R$};
      \node[draw, rectangle] (epsilon) at (1,4) {$R \E I^C \E \epsilon_{a \to b}^{\mathcal F_A} \R \R$};
      \draw (botI) to[in=-90,out=90] (iota.225);
      \draw (botF) to[in=-90,out=90] (i);
      \draw (i) to[in=-90,out=90] (iota.-45);
      \draw (iota) to[in=-90,out=90] (p);
      \draw (p) to[in=-90,out=90] (epsilon);
      \draw (epsilon) to[in=-90,out=90] (top);
    \end{tikzpicture}
    =
    \begin{tikzpicture}[smallstring, baseline=30]
      \node (top) at (1,6) {$R \E I^C \E b \R \R$};
      \node (botI) at (0,0) {$I^D \E a \R$};
      \node (botF) at (2,0) {$\mathcal F_D \E \mathcal A \E a \to b \R \R$};
      \node[draw, rectangle] (i) at (2,1) {$i_{\mathcal A \E a \to b \R}^D$};
      \node[draw, rectangle] (p1) at (0,2) {$p_a^R$};
      \node[draw, rectangle] (p2) at (2,2) {$p_{\mathcal F_A \E \mathcal A \E a \to b \R \R}^R$};
      \node[draw, rectangle] (rho) at (1,3) {$\rho_{I^C \E a \R , I^C \E \mathcal F_A \E \mathcal A \E a \to b \R \R \R}$};
      \node[draw, rectangle] (iota) at (1,4) {$R \E \iota_{a , \mathcal F_A \E \mathcal A \E a \to b \R \R} \R$};
      \node[draw, rectangle] (epsilon) at (1,5) {$R \E I^C \E \epsilon_{a \to b}^{\mathcal F_A} \R \R$};
      \draw (botI) to[in=-90,out=90] (p1);
      \draw (botF) to[in=-90,out=90] (i);
      \draw (i) to[in=-90,out=90] (p2);
      \draw (p2) to[in=-90,out=90] (rho.-45);
      \draw (p1) to[in=-90,out=90] (rho.225);
      \draw (rho) to[in=-90,out=90] (iota);
      \draw (iota) to[in=-90,out=90] (epsilon);
      \draw (epsilon) to[in=-90,out=90] (top);
    \end{tikzpicture}
  \end{align*}
  Next, we use the coherence on $p$ and $r$ and recognize the mate of the composite functor $\mathcal I^C \circ \mathcal R$:
  \begin{align*}
    \begin{tikzpicture}[smallstring, baseline=30]
      \node (top) at (1,6) {$R \E I^C \E b \R \R$};
      \node (botI) at (0,0) {$I^D \E a \R$};
      \node (botF) at (2,0) {$\mathcal F_D \E \mathcal A \E a \to b \R \R$};
      \node[draw, rectangle] (r) at (2,1) {$r_{\mathcal A \E a \to b \R}$};
      \node[draw, rectangle] (p) at (0,2) {$p_a^R$};
      \node[draw, rectangle] (i) at (2,2) {$R \E i_{\mathcal A \E a \to b \R}^C \R$};
      \node[draw, rectangle] (rho) at (1,3) {$\rho_{I^C \E a \R , I^C \E \mathcal F_A \E \mathcal A \E a \to b \R \R \R}$};
      \node[draw, rectangle] (iota) at (1,4) {$R \E \iota_{a , \mathcal F_A \E \mathcal A \E a \to b \R \R} \R$};
      \node[draw, rectangle] (epsilon) at (1,5) {$R \E I^C \E \epsilon_{a \to b}^{\mathcal F_A} \R \R$};
      \draw (botI) to[in=-90,out=90] (p);
      \draw (botF) to[in=-90,out=90] (r);
      \draw (r) to[in=-90,out=90] (i);
      \draw (i) to[in=-90,out=90] (rho.-45);
      \draw (p) to[in=-90,out=90] (rho.225);
      \draw (rho) to[in=-90,out=90] (iota);
      \draw (iota) to[in=-90,out=90] (epsilon);
      \draw (epsilon) to[in=-90,out=90] (top);
    \end{tikzpicture}
    =
    \begin{tikzpicture}[smallstring, baseline=30]
      \node (top) at (1,3) {$R \E I^C \E b \R \R$};
      \node (botI) at (0,0) {$I^D \E a \R$};
      \node (botF) at (2,0) {$\mathcal F_D \E \mathcal A \E a \to b \R \R$};
      \node[draw, rectangle] (p) at (0,1) {$p_a$};
      \node[draw, rectangle] (mate) at (1,2) {$\mate \E \E \mathcal I^C \circ \mathcal R \R_{a \to b} \R$};
      \draw (botI) to[in=-90,out=90] (p);
      \draw (p) to[in=-90,out=90] (mate.225);
      \draw (botF) to[in=-90,out=90] (mate.-45);
      \draw (mate) to[in=-90,out=90] (top);
    \end{tikzpicture}
  \end{align*}
  and this is exactly the mate of the left hand side. Thus we get a 1-cell in $\gextvmoncat$.  With the exception of the new coherence natural transformation $p$, this map was already shown to be an essential inverse to $P_G^{\mathcal A}$ in Theorem \ref{G-graded equivalence}.  But we've also inverted $P_G^{\mathcal A}$ for $p$, since $P_G^{\mathcal A}$ takes $p$ to itself, so $P_G^{\mathcal A}$ is essentially surjective on 1-cells.
\end{proof}
\begin{lem}
  Lastly, $P_G^{\mathcal A}$ is fully faithful on 2-cells.
\end{lem}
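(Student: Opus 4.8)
The plan is to observe that, exactly as in the non-extension case, $P_G^{\mathcal A}$ sends a $2$-cell $\theta$ to the $2$-cell $\Theta$ with $\Theta_a = \mate \E \theta_a \R = \theta_a$ under the identity adjunction $\mathcal V \E 1_{\mathcal V} \to \mathscr D \E \mathcal R \E a \R \to \mathcal S \E a \R \R \R = D \E R \E a \R \to S \E a \R \R$. Since the assignment is the identity on the underlying data, nothing new happens to the naturality, monoidality, or $r$-$s$ coherence conditions: these were already shown to correspond bijectively under mates when we proved that $P$, and hence $P_G$, is fully faithful on $2$-cells. So the only thing left to verify is that the \emph{inclusion-coherence} condition which cuts the $2$-cells of $\gextvmoncat$ out of those of $\ggrvmoncat$ matches, under $\theta \mapsto \Theta$, the inclusion-coherence condition which cuts the $2$-cells of $\gextvmodtens$ out of those of $\ggrvmodtens$.

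First I would recall that a $2$-cell in $\gextvmoncat \E \E \mathcal R , \rho , p \R \to \E \mathcal S , \sigma , q \R \R$ is precisely a $\mathcal V$-monoidal natural transformation $\theta : \mathcal R \Rightarrow \mathcal S$ subject to the extra equation $\E p_a \, \theta_{\mathcal I^C \E a \R} \R \circ \E \blank \circ_{\mathscr D} \blank \R = q_a$ for every $a$, while a $2$-cell in $\gextvmodtens$ is a monoidal natural transformation $\Theta : R \Rightarrow S$ satisfying the $r$-$s$ coherence together with $p_a \circ \Theta_{I^C \E a \R} = q_a$. Because $P_G$ already restricts to a bijection between the $2$-cells of $\ggrvmoncat \E \mathcal R \to \mathcal S \R$ and those of $\ggrvmodtens \E R \to S \R$, it suffices to prove that $\theta$ obeys the first inclusion equation if and only if its image $\Theta$ obeys the second; this will exhibit $P_{G\,\mathscr C \to \mathscr D}^{\mathcal A}$ as a bijection on $2$-cells.

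For that equivalence I would take mates under the identity adjunction. Since $\theta_{\mathcal I^C \E a \R} = \Theta_{I^C \E a \R}$ as morphisms of the underlying category $D$, the enriched composite $\E p_a \, \theta_{\mathcal I^C \E a \R} \R \circ \E \blank \circ_{\mathscr D} \blank \R$ is, by the definition of composition of $1_{\mathcal V}$-graded morphisms in Definition \ref{Underlying category}, literally the ordinary composite $p_a \circ \Theta_{I^C \E a \R}$ in $D$. Thus the two inclusion-coherence equations are the same equation, and each holds precisely when the other does; this is exactly the computation recorded in the preceding lemma (``$p_a \circ \Theta_{I^C \E a \R} = q_a$''), now read as a biconditional rather than a one-way implication, which is legitimate since mates under an adjunction are bijective. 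Consequently the bijection $\theta \mapsto \Theta$ on $\ggrvmoncat$-$2$-cells restricts to a bijection on $\gextvmoncat$-$2$-cells, giving full faithfulness. I do not anticipate any genuine obstacle: the substantive content was settled in showing $P$ fully faithful on $2$-cells, and the single new condition is matched by the definitional identity of enriched and underlying composition; the only point needing care is bookkeeping the object indices, since the $p$-coherence is indexed over $\mathcal I^C \E a \R$ rather than over $a$.
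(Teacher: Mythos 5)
Your proof is correct and follows essentially the same route as the paper: the map on 2-cells is the identity at the component level (mates under the identity adjunction), the naturality/monoidality/$r$-$s$ coherences are handled by the earlier full-faithfulness result, and the new $p$-$q$ coherence on the two sides is the same equation once one unwinds the definition of composition in the underlying category. Your explicit observation that the enriched composite $\E p_a\,\theta_{\mathcal I^C\E a \R}\R \circ \E \blank \circ_{\mathscr D} \blank\R$ \emph{is} the underlying composite $p_a \circ \Theta_{I^C\E a \R}$ (and your correction of the index to $\mathcal I^C\E a\R$) makes precise what the paper compresses into ``follows immediately from taking mates.''
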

\begin{proof}
  Given $\E R , \rho , r , p \R , \E S , \sigma , s , q \R : \E C , I^C , \iota^C , i^C \R \to \E D , I^D , \iota^D , i^D \R$ 1-cells of $G$-extensions of $\E A , \mathcal F_A \R$ and $\Theta : R \Rightarrow S$ a 2-cell between them, we construct $\theta : \mathcal R \Rightarrow \mathcal S$ via $\theta_a \coloneqq \Theta_a$, as before.  We already know that $\theta$ is a $\mathcal V$-monoidal natural transformation by the work before we introduced $G$-extensions.  The last coherence on $\theta$ follows immediately from taking mates as when we constructed $\Theta$ going the other way in the previous section.  Lastly, since we're defining $\theta = \Theta$ at the component level, this map is a bijection, and thus $P_G^{\mathcal A}$ is fully faithful on 2-cells.
\end{proof}

\bibliographystyle{amsalpha}

\bibliography{bibliography}

\end{document}